\newtheorem{lem}{Lemma}
\newtheorem{thm}{Theorem}
\begin{document}

\title{Output-Feedback Boundary Control of a Heat PDE Sandwiched Between Two ODEs}

\author{Ji~Wang
        and Miroslav~Krstic,~\IEEEmembership{Fellow,~IEEE}
\thanks{
J. Wang and M. Krstic are with the Department of Mechanical and Aerospace Engineering, University of California, San Diego, La Jolla, CA 92093-0411 USA (e-mail: jiwang9024@gmail.com, krstic@ucsd.edu).}}

\maketitle

\begin{abstract}
We present designs for exponential stabilization of an ODE-heat PDE-ODE
coupled system where the control actuation only acts in one ODE. The combination of PDE backstepping and ODE backstepping is employed in a state-feedback control law and in an observer that estimates PDE and two ODE states only using one PDE boundary measurement. Based on the state-feedback control law and the observer, the output-feedback control law is then proposed. The exponential stability of the closed-loop system and the boundedness and exponential convergence of the control law are proved via Lyapunov analysis. Finally, numerical simulations validate the
effectiveness of this method for the ``sandwiched'' system.
\end{abstract}

\begin{IEEEkeywords}
backstepping, parabolic systems, ODE-PDE-ODE, distributed parameter systems.
\end{IEEEkeywords}

\IEEEpeerreviewmaketitle

\section{Introduction}
\paragraph{Control of parabolic PDEs}
Parabolic partial differential equations (PDEs) are predominately used in describing fluid, thermal, and chemical dynamics, including many applications of sea ice melting and freezing \cite{Wettlaufer1991Heat}, continuous
casting of steel \cite{Petrus2012Enthalpy} and lithium-ion batteries \cite{koga2017State}.
These therefore give rise to related important control and estimation problems, i.e., the boundary control and state observation of parabolic PDEs in
\cite{krstic2008Adaptiveboundary,Meurer2009Tracking,Cheng2009sampled,Pisano2012Boundary,2015Boundary,Deutscher2015backstepping,Deutscher2016backstepping,Orlov2017Output} and \cite{Ahmed2016Adaptive,Smyshlyaev2005Backstepping,Smyshlyaev2009Adaptive} respectively.
\paragraph{Control of parabolic PDE-ODE systems}
In addition to the aforementioned works on parabolic PDEs, topics concerning parabolic PDE-ODE coupled systems are
also popular, which have rich physical background such as coupled
electromagnetic, coupled mechanical, and coupled chemical
reactions \cite{tang2011state}. Using the backstepping method, state-feedback and output-feedback control designs of a class of heat PDE-ODE couled systems were presented in \cite{susto2010control,tang2011stabilization,tang2011state}. The problem of state-observation is addressed for some parabolic PDE-ODE models in \cite{Ahmed2015Observer,Tang2017State}. The sliding model control was proposed to achieve boundary feedback stabilization of a heat PDE-ODE
cascade system with external disturbances in \cite {wang2015Sliding}.
\paragraph{Control of ODE-PDE-ODE systems}
All aforementioned works consider actuation of PDE boundaries and ignore the dynamics of the actuator. However, sometimes the actuator dynamics may not be neglected, especially when dominant time constants of the actuator are closed to those of the plant. Considering the parabolic PDE-ODE coupled system with ODE actuator dynamics, it gives rise to a more challenging control/estimation problem of an ODE-PDE-ODE ``sandwiched'' system. Fewer attempts have been made on the boundary
control of such an ODE-PDE-ODE system or PDE systems following ODE actuator dynamics where the controller acts. The boundary control of a viscous
Burgers' equation with an integration at the input, which is
regarded as a first-order linear ODE in the input channel,
was considered in \cite{Liu2000Backstepping}. Backstepping state-feedback control design for a transport PDE-ODE system where an integration at the input of the transport PDE was proposed in \cite{2008Lyapunov}. The control problem of an ODE with input delay and unmodeled bandwidth-limiting actuator dynamics, which is represented by an ODE-transport PDE-ODE system where the input ODE is first order, is successfully addressed in \cite{krstic2010compensating}. Stabilization of $2\times2$ coupled linear first-order hyperbolic PDEs sandwiched around two ODEs was also achieved in \cite{wangcontrol}.

In this paper, we use the combination of ODE backstepping and PDE backstepping methods to exponentially stabilize an ODE-heat PDE-ODE coupled system where the two ODEs are of arbitrary orders. An observer is designed to estimate all PDE and ODE states only using one PDE boundary value and then the observer-based output feedback control law is proposed.
\paragraph{Main contributions}
\begin{itemize}
\item This is the first result of stabilizing such an ODE-parabolic PDE-ODE ``sandwiched'' system where the control action only acts in one ODE.
\item Compared with our previous work \cite{wangcontrol} where a state-feedback control law was designed to stabilize the ODE-hyperbolic PDE-ODE system with the second-order input ODE and only a sketch of the design and analysis for an arbitrary-order input ODE was provided, in the present paper we extend the hyperbolic PDE to a parabolic PDE, where challenges appear because of the higher order spatial derivative and the inconformity between the orders of time and spatial derivatives. In addition, we design an observer to estimate all the states of the ODE-PDE-ODE system only using one PDE boundary value, and an output-feedback control law is proposed. Moreover, more detailed control design and stability analysis of the system where arbitrary-order
ODEs sandwich around the PDE are presented.
\item Compared with the previous results about stabilizing ODE-transport PDE-ODE systems where both ODEs are first order \cite{2008Lyapunov,krstic2010compensating,AnfinsenStabilization}, in addition to replacing the transport PDE by a heat PDE, we achieve a more challenging and general result where the orders of both ODEs sandwiching the PDE are arbitrary.
\end{itemize}
\paragraph{Organization}
The rest of the paper is organized as follows. The concerned model is presented in Section \ref{sec:Problem}. The state-feedback control design combining the PDE backstepping and ODE backstepping is shown in Section \ref{eq:sp}. The observer design and the output-feedback control law with stability analysis of the closed-loop system are proposed in Section \ref{sec:of}. The simulation results are provided in Section \ref{sec:sim}. The conclusion and future work are presented in Section \ref{sec:con}.

Throughout this paper, the partial derivatives and total derivatives are denoted as:
\begin{align*}
{f_x}(x,t) =& \frac{{\partial f}}{{\partial x}},~~ \quad {f_t}(x,t) = \frac{{\partial f}}{{\partial t}},\notag\\
\partial_x^m f(x,t)&=\frac{{\partial^m f}}{{\partial x^m}},~~ \partial_t^m f(x,t)=\frac{{\partial^m f}}{{\partial t^m}}, \notag\\
f '(x) =& \frac{{d f}}{{dx}},\quad \dot f(t) = \frac{{df}}{{dt}}, \notag\\
d_x^m f(x)&=\frac{{d^m f}}{{d x^m}},~~ d_t^m f(t)=\frac{{d^m f}}{{d t^m}}.
\end{align*}
\section{Problem statement}\label{sec:Problem}
We consider the following system where two ODEs sandwich around a heat PDE as,
\begin{align}
\dot X(t) &= AX(t) + B{u_x}(0,t),\label{eq:o1}\\
{u_t}(x,t) &= q{u_{xx}}(x,t),\label{eq:o2}\\
u(0,t) &= {C_X}X(t),\label{eq:o3}\\
u(1,t) &= {C_z}Z(t),\label{eq:o4}\\
\dot Z(t) &= {A_z}Z(t) + {B_z}U(t),\label{eq:o5}
\end{align}
$\forall (x,t) \in [0,1]\times[0,\infty)$,
where $X(t) \in \mathbb{R}^{n\times1}$, $Z(t)\in \mathbb{R}^{m\times1}$ are ODE states, $n,m\in N^*$, $N^*$ denoting positive integers. $u(x,t)\in R$ are states of the PDE.
$\emph{A}\in \mathbb{R}^{n\times n}$, $\emph{B}\in \mathbb{R}^{n\times1}$ satisfy that the pair $[A,B]$ is controllable. ${C_X}\in \mathbb{R}^{1\times n}$ and $q\in R$ are arbitrary. $A_z\in\mathbb{R}^{m\times m}$ is
\begin{align}
{A_z} = \left[ {\begin{array}{*{20}{c}}
0&1&0&0& \cdots &0\\
0&0&1&0& \cdots &0\\
{}&{}& \vdots &{}&{}&{}\\
0&0&0&0& \cdots &1\\
{{\bar a_1}}&{{\bar a_2}}&{{\bar a_3}}& \cdots &{{\bar a_{m - 1}}}&{{\bar a_m}}
\end{array}} \right]_{m\times m},\label{eq:Az}
\end{align}
where $\bar a_{1},\cdots,\bar a_{m}$ are arbitrary constants.
$B_z=[0,0,\cdots,1]^T\in \mathbb{R}^{m\times1}$, $C_z=[1,0,\cdots,0]\in \mathbb{R}^{1\times m}$. {Note that $(A_z,B_z)$ and $(A_z,C_z)$ are in the controllability normal form and observability normal form respectively.} $U(t)$ is the control input to be designed.

The control objective is to exponentially stabilize all ODE states $Z(t),X(t)$ and PDE states $u(x,t)$ by designing a control input $U(t)$ in one ODE, and  control input itself should be guaranteed exponentially convergent as well.

The control design in this paper can be applied in the Stefan problem describing the melting or solidification mechanism with liquid-solid dynamics \cite{koga2016Backstepping}, i.e., heat PDE-ODE , driven by a thermal actuator described by an ODE at the boundary of the liquid phase.
\section{State-Feedback Design}\label{eq:sp}
In this section, we combine the PDE backstepping (Section \ref{sec:TandB}) and the ODE backstepping (Section \ref{control}) to design a state-feedback control law. Exponential stability of the state-feedback closed-loop system is proved in Section \ref{sec:stable}. The boundedness and exponential convergence of the state-feedback control law is proved in Section \ref{sec:EBU(t)}.
\subsection{Backstepping for PDE-ODE}\label{sec:TandB}
We would like to use the infinite-dimensional backstepping transformations \cite{krstic2009compensating}:
\begin{align}
w (x,t) = u(x,t) - \int_0^x {{\phi}}(x,y)u(y,t)dy- {\Phi}(x)X(t),\label{eq:t1}
\end{align}
with the inverse transformation as
\begin{flalign}
u (x,t) = w(x,t) - \int_0^x {{\psi}}(x,y)w(y,t)dy- {\Gamma}(x)X(t),\label{eq:tI}
\end{flalign}
to convert the original system \eqref{eq:o1}-\eqref{eq:o3} to
\begin{align}
\dot X(t) &= (A+BK)X(t) + B{w_x}(0,t),\label{eq:target1}\\
{w_t}(x,t) &= q{w_{xx}}(x,t),\label{eq:target2}\\
w(0,t) &= 0,\label{eq:target3}
\end{align}
where the right boundary condition $w(1,t)$ will be given later. $A + BK$ is Hurwitz by choosing the control parameter $K$ since $(A,B)$ is assumed controllable.

{Mapping the original system \eqref{eq:o1}-\eqref{eq:o3} and the system \eqref{eq:target1}-\eqref{eq:target3} via the transformations \eqref{eq:t1}-\eqref{eq:tI}, the explicit solutions of kernels in \eqref{eq:t1}-\eqref{eq:tI} can be obtained as
\begin{align}
{\phi}(x,y)&=\left[C_X,K-C_XBC_X\right]e^{D(x-y)}\left[\begin{array}{c}
                                        I \\
                                        0
                                      \end{array}
\right]B,\\
\Phi(x)&=\left[C_X,K-C_XBC_X\right]e^{D(x-y)}\left[\begin{array}{c}
                                        I \\
                                        0
                                      \end{array}
\right],\\
\psi(x,y)&=\left[C_X,K\right]e^{E(x-y)}\left[\begin{array}{c}
                                        I \\
                                        0
                                      \end{array}
\right]B,\\
\Gamma(x)&=\left[C_X,K\right]e^{E(x-y)}\left[\begin{array}{c}
                                        I \\
                                        0
                                      \end{array}
\right],
\end{align}
$0\le y\le x\le 1$, where the matrix $I$ denotes the identity matrix of the appropriate dimension and $D,E$ are
\begin{align}
D=\left(
    \begin{array}{cc}
      0 & A \\
      I & -BC_X \\
    \end{array}
  \right),~~E=\left(
              \begin{array}{cc}
                0 & A+BK \\
                I & 0 \\
              \end{array}
            \right),
\end{align}
which ensures the invertibility and boundedness of the backstepping transformation \eqref{eq:t1}-\eqref{eq:tI}. The detailed calculations of the kernels are shown in \cite{tang2011state}.
Note that dealing with the right boundary condition in the following steps will not affect determination of the kernels in \eqref{eq:t1}-\eqref{eq:tI}.}

{Let us now calculate the right boundary condition $w(1,t)$ of \eqref{eq:target1}-\eqref{eq:target3}. The right boundary condition $w(1,t)$ can be obtained by taking the $m$-order time derivative of the transformation \eqref{eq:t1} at $x=1$, inserting the original right boundary condition and the inverse transformation \eqref{eq:tI}.

Considering \eqref{eq:o4}-\eqref{eq:o5} with \eqref{eq:Az}, the right boundary condition of the original system can be written as
\begin{align}
\partial _t^mu(1,t)&=\bar a_{1}u(1,t)+\sum_{k=1}^{m-1}\bar a_{k+1}\partial _t^ku(1,t)+U(t)\notag\\
&=\bar a_{1}u(1,t)+\sum_{k=1}^{m-1}\bar a_{k+1}q^k\partial _x^{2k}u(1,t)+U(t).\label{eq:vtt}
\end{align}
Taking the $m$ times derivative in $t$ of \eqref{eq:t1} at $x=1$, we have}
\begin{align}
&\partial _t^mw(1,t) =
\partial _t^mu(1,t) - {q^m}\int_0^1 {\partial _y^{2m}\phi (1,y)u(y,t)} dy\notag\\
&  + q^m\sum\limits_{i = 1}^{2m} {} {( - 1)^i}\partial _y^{i - 1}\phi (1,1)\partial _x^{2m - i}u(1,t)\notag\\
& - \Phi (1){A^m}X(t)-\bigg(q^m\sum\limits_{i = 1}^{2m} {} {( - 1)^i}\partial _y^{i - 1}\phi (1,0)\partial _x^{2m - i}\notag\\
 &+ \Phi (1)\sum\limits_{i = 1}^m {} {A^{i - 1}}B{q^{m - i}}\partial _x^{2(m - i) + 1}\bigg)u(0,t),\label{eq:wmt}
\end{align}
for $m\in N^*$, where
\begin{align*}
\partial _x^{k}u(1,t)\triangleq \partial _x^{k}u(x,t)|_{x=1},~\partial _x^{k}u(0,t)\triangleq \partial _x^{k}u(x,t)|_{x=0}.
\end{align*}
Insert \eqref{eq:vtt} into \eqref{eq:wmt} to replace $\partial _t^mu(1,t)$. Then rewrite $u$ in \eqref{eq:wmt} as $w$ via the inverse transformation \eqref{eq:tI}, where the $k$-order derivative of the inverse transformation \eqref{eq:tI} in $x$ would be used:
\begin{flalign}
\partial _x^ku (x,t)& = \partial _x^k\bigg(w(x,t) - \int_0^x {{\psi}}(x,y)w(y,t)dy- {\Gamma}(x)X(t)\bigg)\notag\\
&=\partial _x^kw(x,t) - \partial _x^k\bigg(\int_0^x {{\psi}}(x,y)w(y,t)dy\bigg)\notag\\
&\quad- d _x^k{\Gamma}(x)X(t)\notag\\
&=\partial _x^kw(x,t)-\int_0^x {\partial _x^{k}{\psi}(x,y)w(y,t)dy}\notag\\
&\quad-\sum\limits_{j = 0}^{k - 1} {} {\chi _{k\_j}}(x)\partial _x^{k - j - 1}w(x,t)- d _x^k{\Gamma}(x)X(t)\label{eq:uxk}
\end{flalign}
for $k\in N^*$,  where ${\chi _{k\_j}}(x)$ denoting the sum of $j$-order derivatives of $\psi(x,x)$ with respect to $x$, results from calculating $\partial _x^k(\int_0^x {{\psi}}(x,y)w(y,t)dy)$, as following
\begin{align}
{\chi _{k\_j}}(x)=\sum_{i=0}^{j}\bar\eta_{k\_j\_i}\partial _x^i\partial _y^{j-i}\psi(x,y)|_{(x,y)=(x,x)},\label{eq:chi}
\end{align}
where constant coefficients $\bar\eta_{k\_j\_i}$ can be easily determined by calculating $\partial _x^k(\int_0^x {{\psi}}(x,y)w(y,t)dy)$ under some specific $k$ according to the order of the plant.

Through plugging \eqref{eq:tI}, \eqref{eq:uxk} into \eqref{eq:wmt} where $\partial _t^mu(1,t)$ has been replaced by \eqref{eq:vtt}, the right boundary condition of the system-$w$ is obtained as:
\begin{align}
&\quad\partial _t^mw(1,t)\notag\\
 &= {{\bar a}_1}w(1,t) - {{\bar a}_1}\int_0^1 {{\psi}(1,y)w(y,t)dy - {{\bar a}_1}{\Gamma}(1)X(t)}\notag\\
 &\quad+ \sum\limits_{k = 1}^{m - 1} {{{\bar a}_{k + 1}}} {q^k}\bigg(\partial _x^{2k}w(1,t) - \int_0^1 {\partial _x^{2k}} {\psi}(1,y)w(y,t)dy\notag\\
 &\quad - \sum\limits_{j = 0}^{2k - 1} {{\chi _{2k\_j}}} (1)\partial _x^{2k - j - 1}w(1,t) - d _x^{2k}{\Gamma}(1)X(t)\bigg)\notag\\
 &\quad- {q^m}\int_0^1 \partial _y^{2m}\phi (1,y)\bigg(w(y,t) - \int_0^y {\psi}(y,z)w(z,t)dz\notag\\
 &\quad - {\Gamma}(y)X(t)\bigg)  dy+ {q^m}\sum\limits_{i = 1}^{2m} {( - 1)^i}\partial _y^{i - 1}\phi (1,1)\notag\\
 &\quad\times\bigg(\partial _x^{2m - i}w(1,t) - \int_0^1 {\partial _x^{2m - i}} {\psi}(1,y)w(y,t)dy\notag\\
  &\quad- \sum\limits_{j = 0}^{2m - i - 1} {{\chi _{2m - i\_j}}} (1)\partial _x^{2m - i - j - 1}w(1,t) \notag\\
  &\quad- d _x^{2m - i}{\Gamma}(1)X(t)\bigg)- \Phi (1){A^m}X(t)\notag\\
 &\quad- {q^m}\sum\limits_{i = 1}^{2m}  {( - 1)^i}\partial _y^{i - 1}\phi (1,0)\bigg(\partial _x^{2m - i}w(0,t)\notag\\
  &\quad- \sum\limits_{j = 0}^{2m - i - 1} {{\chi _{2m - i\_j}}} (0)\partial _x^{2m - i - j - 1}w(0,t)\notag\\
   &\quad- d _x^{2m - i}{\Gamma}(0)X(t)\bigg)+ \Phi (1)\sum\limits_{i = 1}^m {} {A^{i - 1}}B{q^{m - i}}\bigg(\partial _x^{2(m - i) + 1}w(0,t)\notag\\
  &\quad- \sum\limits_{j = 0}^{2(m - i) + 1 - 1} {{\chi _{2(m - i) + 1\_j}}} (0)\partial _x^{2(m - i) + 1 - j - 1}w(0,t)\notag\\
   &\quad- d_x^{2(m - i) + 1}{\Gamma}(0)X(t)\bigg)  + U(t)\notag\\
 &= \bigg[{{\bar a}_1} + \sum\limits_{k = 1}^{m - 1} {{{\bar a}_{k + 1}}} {q^k}\partial _x^{2k} - \sum\limits_{k = 1}^{m - 1} {{{\bar a}_{k + 1}}} {q^k}\sum\limits_{j = 0}^{2k - 1} {{\chi _{2k\_j}}} (1)\partial _x^{2k - j - 1}\notag\\
  &\quad+ {q^m}\sum\limits_{i = 1}^{2m} {} {( - 1)^i}\partial _y^{i - 1}\phi (1,1)\partial _x^{2m - i}\notag\\
   &\quad- {q^m}\sum\limits_{i = 1}^{2m} {} {( - 1)^i}\partial _y^{i - 1}\phi (1,1)\sum\limits_{j = 0}^{2m - i - 1} {{\chi _{2m - i\_j}}} (x)\partial _x^{2m - i - j - 1}\bigg]w(1,t)\notag\\
 &\quad+ \bigg[ - {q^m}\sum\limits_{i = 1}^{2m}  {( - 1)^i}\partial _y^{i - 1}\phi (1,0)\partial _x^{2m - i}\notag\\
 &\quad+{q^m}\sum\limits_{i = 1}^{2m} {} {( - 1)^i}\partial _y^{i - 1}\phi (1,0)\sum\limits_{j = 0}^{2m - i - 1} {{\chi _{2m - i\_j}}} (0)\partial _x^{2m - i - j - 1}\notag\\
 &\quad+ \Phi (1)\sum\limits_{i = 1}^m  {A^{i - 1}}B{q^{m - i}}\partial _x^{2(m - i) + 1}- \Phi (1)\sum\limits_{i = 1}^m  {A^{i - 1}}B{q^{m - i}}\notag\\
  &\quad\quad\times\sum\limits_{j = 0}^{2(m - i) + 1 - 1} {{\chi _{2(m - i) + 1\_j}}} (0)\partial _x^{2(m - i) + 1 - j - 1}\bigg]w(0,t)\notag\\
 &\quad- \int_0^1 {\bigg[{q^m}\sum\limits_{i = 1}^{2m} {} {{( - 1)}^i}\partial _y^{i - 1}\phi (1,1)\partial _x^{2m - i}} {\psi}(1,y) + {{\bar a}_1}{\psi}(1,y)\notag\\
  &\quad+ \sum\limits_{k = 1}^{m - 1} {{{\bar a}_{k + 1}}} {q^k}\partial _x^{2k}{\psi}(1,y) + {q^m}\partial _y^{2m}\phi (1,y)\notag\\
   &\quad- {q^m}\int_y^1 {\partial _y^{2m}\phi (1,z){\psi}(z,y)dz} \bigg]w(y,t)dy\notag\\
 &\quad+ \bigg[{q^m}\sum\limits_{i = 1}^{2m} {} {( - 1)^i}\partial _y^{i - 1}\phi (1,0)d _x^{2m - i}{\Gamma}(0) - \Phi (1){A^m}\notag\\
  &\quad- \Phi (1)\sum\limits_{i = 1}^m {} {A^{i - 1}}B{q^{m - i}}d_x^{2(m - i) + 1}{\Gamma}(0)\notag\\
   &\quad- {q^m}\sum\limits_{i = 1}^{2m} {} {( - 1)^i}\partial _y^{i - 1}\phi (1,1)d _x^{2m - i}{\Gamma}(1)\notag\\
 &\quad- \sum\limits_{k = 1}^{m - 1} {{{\bar a}_{k + 1}}} {q^k}d _x^{2k}{\Gamma}(1) - {{\bar a}_1}{\Gamma}(1)\notag\\
  &\quad+ {q^m}\int_0^1 {\partial _y^{2m}\phi (1,y){\Gamma}(y)} dy\bigg]X(t) + U(t),\label{eq:aftt}
\end{align}
where some typical operators are
\begin{align*}
\partial _x^{k}w(1,t)\triangleq \partial _x^{k}w(x,t)|_{x=1},~\partial _x^{k}w(0,t)\triangleq \partial _x^{k}w(x,t)|_{x=0},\notag\\
\partial _y^{k}\phi(1,1)\triangleq\partial _y^{k}\phi(x,y)|_{(x,y)=(1,1)},~d_x^{k}\Gamma(0)\triangleq d_x^{k}\Gamma(x)|_{x=0}.
\end{align*}
\eqref{eq:aftt} is a $m$ order ODE system-$w(1,t)$ with a number of PDE state perturbation terms. For clarity, \eqref{eq:aftt} can be written as
\begin{align}
\partial _t^mw(1,t)&= \mathcal Bw(1,t)+ \mathcal Cw(0,t)\notag\\
&\quad - \int_0^1\mathcal D(y)w(y,t)dy+ \mathcal EX(t)+U(t).\label{eq:aftt1}
\end{align}
Note that \eqref{eq:aftt1} is the right boundary condition of the system \eqref{eq:target1}-\eqref{eq:target3}. $\mathcal B,\mathcal C,\mathcal D,\mathcal E$ in \eqref{eq:aftt1} correspond to the parts including derivative operators in the square brackets before $w(1,t)$, $w(0,t)$, $w(y,t)$, $X(t)$ in \eqref{eq:aftt}. $\mathcal D(y)$ is a function of $y$ and $\mathcal E$ is a constant vector. Note that
\begin{align*}
\mathcal Bw(1,t)\triangleq (\mathcal Bw(x,t))|_{x=1},~~\mathcal Cw(0,t)\triangleq (\mathcal Cw(x,t))|_{x=0}.
\end{align*}
{
\begin{thm}
Considering the system \eqref{eq:o1}-\eqref{eq:Az} and the backstepping transformation \eqref{eq:t1}-\eqref{eq:tI}, \eqref{eq:aftt} holds for $m\in N^*$ which is the order of the ODE \eqref{eq:o5}.
\end{thm}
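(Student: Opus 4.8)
\emph{Plan.} The claim is that the explicit formula \eqref{eq:aftt} for $\partial_t^m w(1,t)$ holds for every order $m$ of the output ODE. I would prove it by a direct computation in two stages: first establish the intermediate identity \eqref{eq:wmt}, which expresses $\partial_t^m w(1,t)$ through $u$ and $X$ alone, and then convert every remaining $u$-term into a $w$-term. Throughout I assume the solution is classical enough for all indicated derivatives to exist and for differentiation under the integral sign and Fubini's theorem to apply. Two elementary facts are used repeatedly: iterating the heat equation \eqref{eq:o2} gives $\partial_t^m u(x,t)=q^m\partial_x^{2m}u(x,t)$ (induction on $m$), and iterating the ODE \eqref{eq:o1} together with this identity gives $\partial_t^m X(t)=A^m X(t)+\sum_{i=1}^{m}A^{i-1}Bq^{m-i}\partial_x^{2(m-i)+1}u(0,t)$; the boundary relation \eqref{eq:vtt} is obtained in the same way from \eqref{eq:o4}, \eqref{eq:o5}, \eqref{eq:Az}.

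\emph{Step 1: derivation of \eqref{eq:wmt}.} Apply $\partial_t^m$ termwise to the direct transformation \eqref{eq:t1} at $x=1$, so that $\partial_t^m w(1,t)=\partial_t^m u(1,t)-\int_0^1\phi(1,y)\,\partial_t^m u(y,t)\,dy-\Phi(1)\,\partial_t^m X(t)$. Substitute the two identities above; in the integral term write $\partial_t^m u(y,t)=q^m\partial_y^{2m}u(y,t)$ and integrate by parts $2m$ times, which moves all $y$-derivatives onto $\phi$, produces the boundary terms $q^m\sum_{i=1}^{2m}(-1)^i\partial_y^{i-1}\phi(1,y)\partial_x^{2m-i}u(y,t)$ evaluated at $y=1$ and $y=0$, and leaves the integral $q^m\int_0^1\partial_y^{2m}\phi(1,y)u(y,t)\,dy$. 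Collecting the $y=1$, $y=0$, and $X(t)$ contributions reproduces \eqref{eq:wmt} verbatim.

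\emph{Step 2: rewriting in the $w$-variables.} In \eqref{eq:wmt} replace $\partial_t^m u(1,t)$ using \eqref{eq:vtt} (which also brings in $\bar a_1 u(1,t)$, the terms $\sum_{k=1}^{m-1}\bar a_{k+1}q^k\partial_x^{2k}u(1,t)$, and the input $U(t)$), and then express each remaining $u$-quantity --- the traces $u(1,t)$, $\partial_x^k u(1,t)$, $\partial_x^k u(0,t)$, and $u(y,t)$ under the integral --- through the inverse transformation \eqref{eq:tI} and its spatial-derivative formula \eqref{eq:uxk}. The point requiring care is that differentiating the Volterra term $\int_0^x\psi(x,y)w(y,t)\,dy$ in $x$ produces, besides $\int_0^x\partial_x^k\psi(x,y)w(y,t)\,dy$, the near-diagonal boundary terms $\sum_{j=0}^{k-1}\chi_{k\_j}(x)\partial_x^{k-j-1}w(x,t)$ of \eqref{eq:chi}; and that this integral vanishes at $x=0$, so the expansions of the traces at $x=0$ contribute no integral. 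After the substitution the single genuinely double integral $q^m\int_0^1\partial_y^{2m}\phi(1,y)\big(\int_0^y\psi(y,z)w(z,t)\,dz\big)dy$ is converted, by exchanging the order of integration, into $q^m\int_0^1\big(\int_y^1\partial_y^{2m}\phi(1,z)\psi(z,y)\,dz\big)w(y,t)\,dy$, which is exactly the convolution term appearing in \eqref{eq:aftt}.

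\emph{Step 3: collecting terms, and the main obstacle.} It remains to gather like terms: all contributions proportional to $w(1,t)$ and its spatial traces $\partial_x^\ell w(1,t)$ assemble into the operator $\mathcal B$ (acting on $w$ then evaluated at $x=1$); those proportional to the traces $\partial_x^\ell w(0,t)$ into $\mathcal C$; those under $\int_0^1(\cdot)\,w(y,t)\,dy$ into $-\mathcal D(y)$; those proportional to $X(t)$ --- coming from $\Gamma$, $d_x^k\Gamma$, $\Phi(1)A^m$, and the $\int_0^1\partial_y^{2m}\phi(1,y)\Gamma(y)\,dy$ contribution --- into the constant vector $\mathcal E$; and $U(t)$ is carried over unchanged. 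This is precisely \eqref{eq:aftt}, equivalently \eqref{eq:aftt1}, for every $m\in N^*$. I expect the bookkeeping in Step 2 to be where the real work (and all the opportunities for sign and index errors) lies: one must track how each operator $\partial_x^{2k}$, $\partial_x^{2m-i}$, $\partial_x^{2(m-i)+1}$ acts separately on the three pieces of \eqref{eq:tI}, keep the $\chi_{k\_j}$ boundary terms in order, and check that the Fubini swap yields exactly the claimed kernel with limits $\int_y^1$. An induction on $m$ --- differentiating \eqref{eq:aftt1} once more in $t$ and re-applying the heat equation --- is also possible but messier, since \eqref{eq:wmt} is available directly for each $m$.
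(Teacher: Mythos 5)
Your proposal is correct, but it reaches \eqref{eq:wmt} and \eqref{eq:uxk} by a different route than the paper. The paper proves both identities by induction: it checks \eqref{eq:wmt} for $m=1,2$ and \eqref{eq:uxk} for $k=1,2$, then differentiates once more in $t$ (respectively $x$), re-applies the heat equation and the ODE \eqref{eq:o1}, and absorbs the newly created boundary terms into the sums (obtaining along the way the recursions ${\chi_{\bar n+1\_j}}(x)={\chi_{\bar n\_{j-1}}}'(x)+{\chi_{\bar n\_j}}(x)$, etc., for the near-diagonal coefficients). You instead obtain \eqref{eq:wmt} in closed form in one pass: iterate $\partial_t^m u=q^m\partial_x^{2m}u$ and the induced formula $\partial_t^m X(t)=A^mX(t)+\sum_{i=1}^m A^{i-1}Bq^{m-i}\partial_x^{2(m-i)+1}u(0,t)$, differentiate \eqref{eq:t1} under the integral, and integrate by parts $2m$ times, which produces exactly the two alternating boundary sums and the $\partial_y^{2m}\phi$ integral; your subsequent substitution of \eqref{eq:vtt}, \eqref{eq:tI}, \eqref{eq:uxk} and the Fubini exchange giving the $\int_y^1\partial_y^{2m}\phi(1,z)\psi(z,y)dz$ kernel reproduces \eqref{eq:aftt} term by term, under the same regularity assumptions the paper tacitly makes. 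What your route buys is a self-contained closed-form derivation of \eqref{eq:wmt} without a separate base-case/inductive-step verification; what the paper's induction buys is that it simultaneously certifies the structural formula \eqref{eq:uxk}, which you invoke as routine Leibniz-rule bookkeeping for the Volterra term. That invocation is the only place where your write-up is lighter than a full proof: the claim that $\partial_x^k\int_0^x\psi(x,y)w(y,t)dy$ splits into $\int_0^x\partial_x^k\psi(x,y)w(y,t)dy$ plus trace terms of the form $\sum_{j=0}^{k-1}\chi_{k\_j}(x)\partial_x^{k-j-1}w(x,t)$ with $\chi_{k\_j}$ as in \eqref{eq:chi} is itself most cleanly justified by the short induction on $k$ that the paper performs, so you should either include that induction or cite it explicitly; with that added, your argument is complete and equivalent in content to the paper's.
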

\begin{proof}
According to the derivation of \eqref{eq:aftt}, i.e., \eqref{eq:wmt}-\eqref{eq:aftt}, we know \eqref{eq:aftt} is obtained by inserting the plant dynamics \eqref{eq:vtt} and \eqref{eq:uxk} into \eqref{eq:wmt} straightforwardly, so \textbf{the correctness of \eqref{eq:aftt} depends on that of \eqref{eq:uxk} and \eqref{eq:wmt}}. Next, we prove the correctness of \eqref{eq:uxk} and \eqref{eq:wmt} by induction. In detail, in order to prove the statement that \eqref{eq:wmt} and \eqref{eq:uxk} hold for arbitrary positive integers $m,k$, we first prove \eqref{eq:wmt} and \eqref{eq:uxk} are true in the initial cases $m=1,2$, $k=1,2$, and then assume they are true for $m={\bar n}$, $k={\bar n}$ and show they hold for $m={\bar n}+1$, $k={\bar n}+1$ respectively where ${\bar n}$ is a positive integer.

\textbf{Proof of correctness of \eqref{eq:wmt} by induction}:

\textbf{a). Checking the initial cases $m=1,2$.}
It is easily to see that \eqref{eq:wmt} is true when $m=1$. Here we show the checking for the case $m=2$.

Taking twice time derivatives of \eqref{eq:t1}, we have
\begin{align}
{w_{tt}}(1,t)=&{u_{tt}}(1,t) - {q^2}\phi (1,1){u_{xxx}}(1,t) + {q^2}\phi (1,0){u_{xxx}}(0,t)\notag\\
 &+ {q^2}{\phi _y}(1,1){u_{xx}}(1,t) - {q^2}{\phi _y}(1,0){u_{xx}}(0,t)\notag\\
 &- {q^2}{\phi _{yy}}(1,1){u_x}(1,t) + {q^2}{\phi _{yy}}(1,0){u_x}(0,t)\notag\\
 &+ {q^2}{\phi _{yyy}}(1,1)u(1,t) - {q^2}{\phi _{yyy}}(1,0)u(0,t)\notag\\
 & - {q^2}\int_0^1 {{\phi _{yyyy}}(1,y)u(y,t)} dy- \Phi (1){A^2}X(t)\notag\\
 & - \Phi (1)AB{u_x}(0,t) - \Phi (1)B{u_{xt}}(0,t).\label{eq:w2t}
\end{align}
Setting $m=2$ in \eqref{eq:wmt}, we obtain
\begin{align}
&\partial _t^2w(1,t) =
\partial _t^2u(1,t) - {q^2}\int_0^1 {\partial _y^{4}\phi (1,y)u(y,t)} dy\notag\\
&  + q^2\sum\limits_{i = 1}^{4} {} {( - 1)^i}\partial _y^{i - 1}\phi (1,1)\partial _x^{4 - i}u(1,t)\notag\\
& - \Phi (1){A^2}X(t)-q^2\sum\limits_{i = 1}^{4} {} {( - 1)^i}\partial _y^{i - 1}\phi (1,0)\partial _x^{4 - i}u(0,t)\notag\\
 &+ \Phi (1)\sum\limits_{i = 1}^2 {} {A^{i - 1}}B{q^{2 - i}}\partial _x^{2(2 - i) }u_x(0,t),\label{eq:wmt2}
\end{align}
where ${q^{2 - i}}\partial _x^{2(2 - i) }u_x(0,t)=\partial _t^{(2-i) }u_x(0,t)$ is used. It can be seen that \eqref{eq:wmt2} is equal to \eqref{eq:w2t}, so \eqref{eq:wmt} is true when $m=2$.

\textbf{{b). Under the induction hypothesis that \eqref{eq:wmt} holds for $m={\bar n}$, show it also holds for $m={\bar n}+1$.}}

Then we can assume \eqref{eq:wmt} is true for $m={\bar n}$ which is a positive integer and next prove \eqref{eq:wmt} also holds for $m={\bar n}+1$.

Taking the time derivative of \eqref{eq:wmt} with setting $m={\bar n}$ we have
\begin{align}
&\quad\partial _t^{{\bar n} + 1}w(1,t)\notag\\
 &= \partial _t^{{\bar n} + 1}u(1,t) - {q^{{\bar n}}}\int_0^1 {\partial _y^{2{\bar n}}\phi (1,y){u_t}(y,t)} dy \notag\\
 &\quad- \Phi (1){A^{{\bar n}}}\dot X(t)+ \sum\limits_{i = 1}^{2{\bar n}} {{q^{{\bar n}}}} {( - 1)^i}\partial _y^{i - 1}\phi (1,1)\partial _{xt}^{2{\bar n} - i}u(1,t)\notag\\
 &\quad- \bigg[\sum\limits_{i = 1}^{2{\bar n}} {{q^{{\bar n}}}} {( - 1)^i}\partial _y^{i - 1}\phi (1,0)\partial _x^{2{\bar n} - i} \notag\\
 &\quad+ \Phi (1)\sum\limits_{i = 1}^{{\bar n}} {} {A^{i - 1}}B{q^{{\bar n} - i}}\partial _x^{2({\bar n} - i) + 1}\bigg]{u_t}(0,t)\notag\\
 &= \partial _t^{{\bar n} + 1}u(1,t) - {q^{{\bar n} + 1}}\int_0^1 {\partial _y^{2{\bar n}}\phi (1,y){u_{xx}}(y,t)} dy\notag\\
  &\quad- \Phi (1){A^{{\bar n} + 1}}X(t) - \Phi (1){A^{{\bar n}}}B{u_x}(0,t)\notag\\
 &\quad+ \sum\limits_{i = 1}^{2{\bar n}} {{q^{{\bar n} + 1}}} {( - 1)^i}\partial _y^{i - 1}\phi (1,1)\partial _x^{2({\bar n} + 1) - i}u(1,t)\notag\\
 &\quad- \bigg[\sum\limits_{i = 1}^{2{\bar n}} {{q^{{\bar n}}}} {( - 1)^i}\partial _y^{i - 1}\phi (1,0)\partial _x^{2{\bar n} - i}\notag\\
  &\quad+ \Phi (1)\sum\limits_{i = 1}^{{\bar n}} {} {A^{i - 1}}B{q^{{\bar n} - i}}\partial _x^{2({\bar n} - i) + 1}\bigg]q{u_{xx}}(0,t)\notag\\
 &= \partial _t^{{\bar n} + 1}u(1,t) - {q^{{\bar n} + 1}}\partial _y^{2{\bar n}}\phi (1,1){u_x}(1,t)\notag\\
 &\quad + {q^{{\bar n} + 1}}\partial _y^{2{\bar n}}\phi (1,0){u_x}(0,t)\notag\\
 &\quad+ {q^{{\bar n} + 1}}\partial _y^{2{\bar n} + 1}\phi (1,1)u(1,t) - {q^{{\bar n} + 1}}\partial _y^{2{\bar n} + 1}\phi (1,0)u(0,t)\notag\\
  &\quad- {q^{{\bar n} + 1}}\int_0^1 {\partial _y^{2({\bar n} + 1)}\phi (1,y)u(y,t)} dy\notag\\
 &\quad- \Phi (1){A^{{\bar n} + 1}}X(t) - \Phi (1){A^{{\bar n}}}B{u_x}(0,t)\notag\\
 &\quad+ \sum\limits_{i = 1}^{2{\bar n}} {{q^{{\bar n} + 1}}} {( - 1)^i}\partial _y^{i - 1}\phi (1,1)\partial _x^{2({\bar n} + 1) - i}u(1,t)\notag\\
 &\quad- \bigg[\sum\limits_{i = 1}^{2{\bar n}} {{q^{{\bar n}}}} {( - 1)^i}\partial _y^{i - 1}\phi (1,0)\partial _x^{2{\bar n} - i}\notag\\
  &\quad+ \Phi (1)\sum\limits_{i = 1}^{{\bar n}} {} {A^{i - 1}}B{q^{{\bar n} - i}}\partial _x^{2({\bar n} - i) + 1}\bigg]q{u_{xx}}(0,t)\notag\\
 &= \partial _t^{{\bar n} + 1}u(1,t) + {q^{{\bar n} + 1}}\partial _y^{2{\bar n}}\phi (1,0){u_x}(0,t)\notag\\
 &\quad- {q^{{\bar n} + 1}}\partial _y^{2{\bar n} + 1}\phi (1,0)u(0,t) \notag\\
 &\quad- {q^{{\bar n} + 1}}\int_0^1 {\partial _y^{2({\bar n} + 1)}\phi (1,y)u(y,t)} dy\notag\\
 &\quad- \Phi (1){A^{{\bar n} + 1}}X(t) - \Phi (1){A^{{\bar n}}}B{u_x}(0,t)\notag\\
 &\quad+ \sum\limits_{i = 1}^{2({\bar n} + 1)} {{q^{{\bar n} + 1}}} {( - 1)^i}\partial _y^{i - 1}\phi (1,1)\partial _x^{2({\bar n} + 1) - i}u(1,t)\notag\\
 &\quad- \bigg[\sum\limits_{i = 1}^{2{\bar n}} {{q^{{\bar n} + 1}}} {( - 1)^i}\partial _y^{i - 1}\phi (1,0)\partial _x^{2{\bar n} - i + 2} \notag\\
 &\quad+ \Phi (1)\sum\limits_{i = 1}^{{\bar n}} {} {A^{i - 1}}B{q^{{\bar n} - i + 1}}\partial _x^{2({\bar n} - i) + 2 + 1}\bigg]u(0,t),\label{eq:www}
 \end{align}
 where $- {q^{\bar n + 1}}\partial _y^{2\bar n}\phi (1,1){u_x}(1,t)$,${q^{\bar n + 1}}\partial _y^{2\bar n + 1}\phi (1,1)u(1,t)$ are combined into  $\sum_{i = 1}^{2(\bar n + 1)} {{q^{\bar n + 1}}} {( - 1)^i}\partial _y^{i - 1}\phi (1,1)\partial _x^{2(\bar n + 1) - i}u(1,t)$ as the terms $i=2\bar n+1$ and $i=2\bar n+2$ respectively.

 Combining ${q^{\bar n + 1}}\partial _y^{2\bar n}\phi (1,0){u_x}(0,t)$,$- {q^{\bar n + 1}}\partial _y^{2\bar n + 1}\phi (1,0)u(0,t)$ into $\sum_{i = 1}^{2(\bar n+1)} {{q^{\bar n + 1}}} {( - 1)^i}\partial _y^{i - 1}\phi (1,0)\partial _x^{2(\bar n+1) - i}u(0,t)$ as the terms $i=2\bar n+1$ and $i=2\bar n+2$ respectively, combining $- \Phi (1){A^{\bar n}}B{u_x}(0,t)$ into $-\Phi (1)\sum_{i = 1}^{\bar n+1} {} {A^{i - 1}}B{q^{\bar n+1 - i }}\partial _x^{2(\bar n+1 - i) + 1}u(0,t)$ as the term $i=\bar n+1$, from \eqref{eq:www} we have
 \begin{align}
&\quad\partial _t^{\bar n + 1}w(1,t)\notag\\
&= \partial _t^{\bar n + 1}u(1,t) - {q^{\bar n + 1}}\int_0^1 {\partial _y^{2(\bar n + 1)}\phi (1,y)u(y,t)} dy\notag\\
 &\quad+ {{q^{\bar n + 1}}}\sum\limits_{i = 1}^{2(\bar n + 1)}  {( - 1)^i}\partial _y^{i - 1}\phi (1,1)\partial _x^{2(\bar n + 1) - i}u(1,t)\notag\\
 &\quad- \Phi (1){A^{\bar n + 1}}X(t)\notag\\
 &\quad- {{q^{\bar n + 1}}}\bigg[\sum\limits_{i = 1}^{2(\bar n+1)}  {( - 1)^i}\partial _y^{i - 1}\phi (1,0)\partial _x^{2(\bar n+1) - i} \notag\\
 &\quad+ \Phi (1)\sum\limits_{i = 1}^{\bar n+1} {} {A^{i - 1}}B{q^{\bar n+1 - i}}\partial _x^{2(\bar n+1 - i)+ 1}\bigg]u(0,t).\label{eq:wm1}
\end{align}
Compare \eqref{eq:wm1} with \eqref{eq:wmt} where $m=\bar n$, we prove \eqref{eq:wmt} is true for $m\in N^*$.

\textbf{Proof of correctness of \eqref{eq:uxk} by induction}:

\textbf{a). Checking the initial cases $k=1,2$}. It is easily to see that \eqref{eq:uxk} is true when $k=1$. Here we show the checking for the case $k=2$.

Taking twice time derivatives of \eqref{eq:tI}, we have
\begin{align}
u_{xx} (x,t) =& w_{xx}(x,t)-\int_0^x \partial _{xx}{\psi}(x,y)w(y,t)dy \notag\\
&-  {{\psi}_x}(x,x)w(x,t)-{{\psi}_y}(x,x)w(x,t)\notag\\
&-{{\psi}}(x,x)w_x(x,t)- {\Gamma}''(x)X(t).\label{eq:tItt}
\end{align}
Setting $k=2$ in \eqref{eq:uxk}
\begin{align}
\partial _x^2u (x,t)& =\partial _x^2w(x,t)-\int_0^x {\partial _x^{2}{\psi}(x,y)w(y,t)dy}\notag\\
&\quad-\sum\limits_{j = 0}^{1}  {\chi _{2\_j}}(x)\partial _x^{1 - j}w(x,t)- d_x^2{\Gamma}(x)X(t),\label{eq:uxk2}
\end{align}
where $\sum\limits_{j = 0}^{1}  {\chi _{2\_j}}(x)\partial _x^{1 - j}w(x,t)={\chi _{2\_0}}(x)w_x(x,t)+{\chi _{2\_1}}(x)w(x,t)$ can be written as $-{{\psi}}(x,x)w_x(x,t)+{{\psi}_x}(x,x)w(x,t)-{{\psi}_y}(x,x)w(x,t)$ with $\bar\eta_{2\_0\_0}=1$, $\bar\eta_{2\_1\_0}=1$ and $\bar\eta_{2\_1\_1}=1$,
according to the definition of ${\chi _{k\_j}}(x)$ \eqref{eq:chi}.

Comparing \eqref{eq:tItt} with \eqref{eq:uxk2}, we know \eqref{eq:uxk} is true when $k=2$.

\textbf{{b). Under the induction hypothesis that \eqref{eq:uxk} holds for $k=\bar n$, show it also holds for $k=\bar n+1$.}}

Taking the derivative of \eqref{eq:uxk} with setting $k=\bar n$ in $x$, we have
\begin{flalign}
&\quad\partial _x^{\bar n+1}u (x,t)\notag\\
&=\partial _x^{\bar n+1}w(x,t)-\int_0^x {\partial _x^{\bar n+1}{\psi}(x,y)w(y,t)dy}\notag\\
&\quad-\partial _x^{\bar n}{\psi}(x,x)w(x,t)-\sum\limits_{j = 0}^{\bar n - 1} {} {\chi_{{\bar n\_j}}}'(x)\partial _x^{\bar n - j - 1}w(x,t)\notag\\
&\quad-\sum\limits_{j = 0}^{\bar n - 1} {} {\chi _{\bar n\_j}}(x)\partial _x^{\bar n+1 - j - 1}w(x,t)- d _x^{\bar n+1}{\Gamma}(x)X(t)\notag\\
&=\partial _x^{\bar n+1}w(x,t)-\int_0^x {\partial _x^{\bar n+1}{\psi}(x,y)w(y,t)dy}\notag\\
&\quad-\sum\limits_{j = 0}^{\bar n} {} {\chi _{\bar n+1\_j}}(x)\partial _x^{\bar n+1 - j - 1}w(x,t)- d_x^{\bar n+1}{\Gamma}(x)X(t),
\label{eq:uxk1}
\end{flalign}
where
\begin{align}
&\quad\sum\limits_{j = 0}^{\bar n} {} {\chi _{{\bar n+1}\_j}}(x)\partial _x^{\bar n+1 - j - 1}w(x,t)\notag\\
&=-\partial _x^{\bar n}{\psi}(x,x)w(x,t)-\sum\limits_{j = 0}^{\bar n - 1} {} {\chi _{\bar n\_j}}'(x)\partial _x^{\bar n - j - 1}w(x,t)\notag\\
&\quad-\sum\limits_{j = 0}^{\bar n - 1} {} {\chi _{\bar n\_j}}(x)\partial _x^{\bar n+1 - j - 1}w(x,t)
\end{align}
with
\begin{align*}
{\chi _{{\bar n+1}\_{\bar n}}}(x)&=-\partial _x^{\bar n}{\psi}(x,x)+{\chi _{\bar n\_{\bar n-1}}}'(x),\notag\\
{\chi _{{\bar n+1}\_0}}(x)&={\chi _{\bar n\_0}}(x),\notag\\
{\chi _{{\bar n+1}\_j}}(x)&={\chi _{\bar n\_{j-1}}}'(x)+{\chi _{\bar n\_j}}(x),~~ j=1,\cdots,\bar n-1.
\end{align*}
Comparing \eqref{eq:uxk1} with \eqref{eq:uxk} where $k=\bar n$, we prove \eqref{eq:uxk} is true for $k\in N^*$.

Because \eqref{eq:wmt}, \eqref{eq:uxk} are true for $m,k\in N^*$ respectively, we can conclude \eqref{eq:aftt} is true because \eqref{eq:aftt} is obtained by inserting \eqref{eq:uxk} into \eqref{eq:wmt}. The proof is completed.
\end{proof}}

\subsection{Backstepping for input ODE with PDE state perturbations}\label{control}
The following backstepping transformation \cite{wangcontrol} for the system-$\left(w(1,t),w_t(1,t),\cdots,\partial_t^{m-1} w(1,t)\right)$ \eqref{eq:aftt} is made:
\begin{align}
y_1(t)&=w(1,t),\label{eq:nct1}\\
y_2(t)&=w_t(1,t)+\tau_1[w(1,t)],\label{eq:nct2}\\
&\cdots\notag\\
y_{m}(t)&=\partial_t^{m-1} w(1,t)\notag\\
&\quad+\tau_{m-1}[w(1,t),\cdots,\partial_t^{m-2} w(1,t)],\label{eq:nctm}
\end{align}
where $\tau_1,\cdots,\tau_{m-1}$ defined in the following steps are the virtual controls in the ODE backstepping method.

\textbf{Step.1} We consider a Lyapunov function candidate as
\begin{align}
V_{y1}=\frac{1}{2}y_1(t)^2.\label{eq:nVb1}
\end{align}
Taking the derivative of \eqref{eq:nVb1}, we obtain $\dot V_{y1}=-c_1y_1(t)^2+y_1(t)y_2(t)$ with the choice of
$\tau_1=c_1y_1(t)$, where $c_1$ is a positive constant to be determined later.

\textbf{Step.2} A Lyapunov function candidate is considered as
\begin{align}
V_{y2}=V_{y1}+\frac{1}{2}y_2(t)^2=\frac{1}{2}y_1(t)^2+\frac{1}{2}y_2(t)^2.\label{eq:mVy2}
\end{align}
Taking the derivative of \eqref{eq:mVy2}, we have
\begin{align*}
\dot V_{y2}=-c_1y_1(t)^2+y_1(t)y_2(t)+y_2(t)(y_3(t)-\tau_2+\dot\tau_1).
\end{align*}
Choosing $\tau_2=\dot\tau_1+y_1(t)+c_2y_2(t)$ , we have
\begin{align}
\dot V_{y2}=-c_1y_1(t)^2-c_2y_2(t)^2+y_2(t)y_3(t).
\end{align}

\textbf{Step.3} $\cdots$ \textbf{Step.m-1}

\textbf{Step.m} Similarly, a Lyapunov function candidate is considered as
\begin{align}
V_{ym}&=V_{y_{m-1}}+\frac{1}{2}y_m(t)^2=\frac{1}{2}y_1(t)^2+\frac{1}{2}y_2(t)^2\notag\\
&\quad+\cdots+\frac{1}{2}y_{m-1}(t)^2+\frac{1}{2}y_m(t)^2.\label{eq:Vym}
\end{align}
Taking the derivative of \eqref{eq:Vym}, we have
\begin{align}
\dot V_{ym}&=-c_1y_1(t)^2-c_2y_2(t)^2-\cdots-c_{m-1}y_{m-1}(t)^2\notag\\
&\quad+y_{m-1}(t)y_{m}(t)+y_{m}(t)\dot y_{m}(t).\label{eq:dVym}
\end{align}
Considering \eqref{eq:nctm} and \eqref{eq:aftt1}, \eqref{eq:dVym} can be rewritten as
\begin{align}
&\dot V_{ym}=-c_1y_1(t)^2-c_2y_2(t)^2-\cdots-c_{m-1}y_{m-1}(t)^2\notag\\
&+y_{m-1}(t)y_{m}(t)+y_{m}(t)\bigg(U(t) + \mathcal Bw(1,t)\notag\\
& + \mathcal Cw(0,t)- \int_0^1\mathcal D(y)w(y,t)dy+ \mathcal EX(t)+\dot\tau_{m-1}\bigg),\label{eq:dVym1}
\end{align}
where
\begin{align}
\tau_{m-1}&=c_1y_1^{m-2}(t)+y_1^{m-3}(t)+c_2y_2^{m-3}(t)+y_2^{m-4}(t)\notag\\
&\quad+\cdots+c_{m-1}y_{m-1}(t),~~\forall m\ge4.\label{eq:taum-1}
\end{align}
Note $y_i^n(t)$ denotes $n$ order derivative of $y_i(t),~\forall i=1,\cdots,m$.

Design the control input as:
\begin{align}
U(t)&= - \mathcal Bw(1,t)- \mathcal Cw(0,t)\notag\\
 &\quad-y_{m-1}(t)-\dot\tau_{m-1}-c_my_m(t).\label{eq:Un}
\end{align}
Recalling \eqref{eq:taum-1} and \eqref{eq:nct1}-\eqref{eq:nctm}, we know
\begin{align}
&y_{m-1}(t)+\dot\tau_{m-1}=\sum\limits_{i=0}^{m-1}\alpha_i(c_1,\cdots,c_{m-1})\partial_t^{i}w(1,t),\label{eq:y-d}\\
&c_my_m(t)=c_m\sum\limits_{i=0}^{m-1}\beta_i(c_1,\cdots,c_{m-1})\partial_t^{i}w(1,t),\label{eq:cy}
\end{align}
where $\alpha_i,\beta_i$ are constants depending on $c_1,\cdots,c_{m-1}$.

The control law \eqref{eq:Un} then can be expressed as
\begin{align}
U(t)=&\mathcal Lw(1,t)-\mathcal Cw(0,t),\label{eq:Uno}
\end{align}
where
\begin{align}
\mathcal L=-\mathcal B-\sum\limits_{i=0}^{m-1}q^i(\alpha_i+c_m\beta_i)\partial_x^{2i}. \label{eq:mL}
\end{align}

Submitting \eqref{eq:Un} into \eqref{eq:dVym1}, we get
\begin{align}
\dot V_{ym}&=-c_1y_1(t)^2-c_2y_2(t)^2-\cdots-c_{m}y_{m}(t)^2\notag\\
&\quad+y_{m}(t)\bigg(- \int_0^1\mathcal D(y)w(y,t)dy+ \mathcal EX(t)\bigg),\label{eq:dVymfinal}
\end{align}
where $c_1,\cdots,c_m$ are positive constants to be determined later.
\subsection{Control law and stability analysis }\label{sec:stable}
Substituting the backstepping transformation \eqref{eq:t1} into \eqref{eq:Uno}, we get the control input expressed by the original states:
\begin{align}
U(t)
&=\mathcal Lu(1,t)-\left(\mathcal L\Phi(1)-\mathcal C\Phi(0)\right)X(t)-\mathcal Cu(0,t)\notag\\
&\quad-\mathcal L\int_0^1 {{\phi}}(1,y)u(y,t)dy\notag\\
&\quad+\bar F\left(u(0,t),\cdots,\partial _x^{2m-2}u(0,t)\right),\label{eq:Uo}
\end{align}
where the function $\bar F$ is obtained from
\begin{align}
\bar F=\left(\mathcal C \int_0^x {{\phi}}(x,y)u(y,t)dy\right)\bigg|_{x=0}\label{eq:bF}
\end{align}
with $\mathcal C$ including differential operators $\sum_{i=0}^{2m-1}\partial _x^{i}$ defined before. The pending control parameters $c_1,\cdots,c_m$ included in $\mathcal L$ will be determined in the following stability analysis.
According to the operators $\mathcal L$, $\mathcal C$, we know the signals used in the control law \eqref{eq:Uo}-\eqref{eq:bF} are $\sum_{i=0}^{2m-1}\partial_x^{i}u(1,t)$, $\sum_{i=0}^{2m-1}\partial_x^{i}u(0,t)$, $X(t)$ and $u(x,t)$. In order to ensure the control law is sufficiently regular, we will require the initial value $u(x,0)$ to be in $H^{2m}(0,1)$ which is defined as $H^{2m}(0,1) = \{u|u\in L^2(0,1),u_x\in L^2(0,1),\cdots,\partial_x^{2m-1}u\in L^2(0,1), \partial_x^{2m}u\in L^2(0,1)\}$ for $m\ge 1$, where $L^2(0,1)$ is the usual Hilbert space.
\begin{thm}\label{main}
{Consider the closed-loop system consisting of the plant \eqref{eq:o1}-\eqref{eq:o5} and the control law \eqref{eq:Uo}-\eqref{eq:bF} with some control parameters $c_1,\cdots,c_m$, and initial values $u(x, 0)\in H^{2m}(0,1)$. There exist constants $\Upsilon_s>0$, $\lambda_s>0$ such that
\begin{align}
\Theta(t)\le \Upsilon_s\Theta(0)e^{-\lambda_s t},\label{eq:T1}
\end{align}
where
\begin{align}
\Theta(t)=\left(\|{{u}(\cdot,t)}\|^2+ \|{{u_x}(\cdot,t)}\|^2+ {\left| {Z(t)} \right|^2}+{\left| {X(t)} \right|^2}\right)^{\frac{1}{2}}.\label{eq:Theo2}
\end{align}
$\left\|\cdot\right\|$ denotes the norm on $L^2(0,1)$, i.e., $\left\|u\right\|=\sqrt{\int_0^{1} u(x,t)^2 dx}$ and  $|\cdot|$ denotes the Euclidean norm.}
\end{thm}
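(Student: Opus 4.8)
The plan is to perform a cascaded Lyapunov analysis following the backstepping transformations, working in the $(w, X, y_1, \dots, y_m)$ coordinates where the dynamics have triangular structure. First I would establish the equivalence of norms: since the transformations \eqref{eq:t1}--\eqref{eq:tI} and their kernels are bounded and invertible, and the ODE backstepping map \eqref{eq:nct1}--\eqref{eq:nctm} is an invertible linear map on finite-dimensional signals, there exist positive constants such that $\Theta(t)^2$ is equivalent to $\|w(\cdot,t)\|^2 + \|w_x(\cdot,t)\|^2 + |X(t)|^2 + |Z(t)|^2$, and in turn the $y_i(t)$ are bounded by $w(1,t)$ and its spatial derivatives at the boundary (via $\partial_t^k w(1,t) = q^k \partial_x^{2k} w(1,t)$). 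The one subtlety is that $\|w_x\|$ and the boundary traces $\partial_x^j w(1,t)$ require the higher regularity $u(x,0)\in H^{2m}(0,1)$, which is exactly why that hypothesis is imposed; I would note the well-posedness of the closed-loop system in that space as a standing assumption rather than prove it.

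Next I would build the composite Lyapunov functional
\begin{align}
V(t) = a_1 V_X(t) + a_2 \int_0^1 (1+x) w(x,t)^2 \, dx + a_3 \int_0^1 (1+x) w_x(x,t)^2 \, dx + a_4 V_{ym}(t), \notag
\end{align}
where $V_X = X^\top P X$ with $P$ the Lyapunov matrix for the Hurwitz matrix $A+BK$ (i.e.\ $P(A+BK)+(A+BK)^\top P = -Q$), $V_{ym}$ is the ODE-backstepping functional \eqref{eq:Vym}, and the weights $a_i>0$ are to be chosen. Differentiating term by term: $\dot V_X$ produces $-X^\top Q X$ plus a cross term $2X^\top P B w_x(0,t)$ handled by Young's inequality against the $\|w_x\|$ term; the weighted $\|w\|^2$ term, using \eqref{eq:target2}--\eqref{eq:target3} and integration by parts, yields $-q\int_0^1 w_x^2 \,dx$ plus boundary contributions at $x=1$ involving $w(1,t)$ and $w_x(1,t)$, which are $y_1$ and (a combination of) the $y_i$; similarly the weighted $\|w_x\|^2$ term gives a negative $-q\int_0^1 w_{xx}^2$ term after using $w_t(0,t)=0$ (differentiate \eqref{eq:target3} in $t$), plus boundary terms at $x=1$; and $\dot V_{ym}$ is exactly \eqref{eq:dVymfinal}, namely $-\sum_{i=1}^m c_i y_i^2 + y_m\big(-\int_0^1 \mathcal D(y) w(y,t)\,dy + \mathcal E X(t)\big)$.

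The key assembly step is then to dominate all indefinite cross terms. The residual terms $y_m \mathcal E X$ and $y_m\int \mathcal D w$ are absorbed by Young's inequality into the negative $-c_m y_m^2$, the $-X^\top Q X$, and the negative $\int_0^1 w^2$ terms, at the cost of enlarging the $c_i$. The boundary traces at $x=1$ appearing from integration by parts — quantities like $w_x(1,t)^2$ and $w_{xx}(1,t)^2$ — must be re-expressed through $\partial_t w(1,t), \partial_t^2 w(1,t)$ via $q^k\partial_x^{2k}w(1,t)=\partial_t^k w(1,t)$, hence through the $y_i$'s, and then absorbed into $-\sum c_i y_i^2$; odd-order traces like $w_x(1,t)$ are controlled by interpolation/Agmon-type estimates $\|w_x(\cdot)\|_\infty^2 \le \epsilon \|w_{xx}\|^2 + C_\epsilon \|w_x\|^2$ against the available negative $\int w_{xx}^2$ and $\int w_x^2$ terms. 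Choosing $a_1,\dots,a_4$ and then $c_1,\dots,c_m$ large enough in the right order, one obtains $\dot V \le -\lambda V$ for some $\lambda>0$, hence $V(t)\le V(0)e^{-\lambda t}$; transferring back through the norm equivalences gives \eqref{eq:T1} with suitable $\Upsilon_s$ and $\lambda_s$.

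The main obstacle I anticipate is the bookkeeping of the boundary terms at $x=1$ generated by the weighted $H^1$-in-$w$ estimate: the second term of the functional contributes a $w(1,t) w_x(1,t)$ cross term and the third contributes $w_x(1,t) w_{xx}(1,t)$, and $w_{xx}(1,t)$ is only controllable after invoking \eqref{eq:aftt1} (equivalently \eqref{eq:dVym1}) — so the ordering of the Lyapunov weights matters, and one must make sure the $-q\int w_{xx}^2$ margin is genuinely available (it is, since $w_t(0,t)=0$ kills the left endpoint) and large enough relative to the weight on the third term. Getting a clean, self-consistent chain of inequalities here — rather than a circular one — is the delicate part; everything else is routine Young's inequality and Poincaré/Agmon bookkeeping.
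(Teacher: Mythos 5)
Your proposal follows essentially the same route as the paper: a Lyapunov functional in the target coordinates combining $X^{T}PX$, the $L^2$ norms of $w$ and $w_x$ (the paper uses unweighted norms with the trace bound $w_x(1,t)^2\le w_x(0,t)^2+\|w_x\|^2+\|w_{xx}\|^2$ in place of your $(1+x)$ weights and Agmon interpolation), and the ODE-backstepping functional $V_{ym}$, with the $x=1$ boundary terms $w(1,t)^2$ and $w_t(1,t)^2$ rewritten through $y_1,y_2$, the cross terms $y_m\mathcal EX$ and $y_m\int_0^1\mathcal D w$ absorbed by Young's inequality, and the estimate transferred back through the invertible transformations. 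Two details where the paper's execution differs from your sketch: the term proportional to $c_1^2y_1^2$ coming from $w_t(1,t)^2$ is absorbed by placing a large weight $R_y$ on $V_{ym}$ chosen \emph{after} $c_1,c_2$ (enlarging $c_1$ itself would be circular, since the bad term scales like $c_1^2$ while the good term scales like $c_1$), and the case $m=1$, where $w_t(1,t)$ is not expressible through the $y_i$'s, is treated separately via the closed-loop relation \eqref{eq:wtm}.
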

\begin{proof}
We start from studying the stability of the target system. The equivalent stability property between the target system and the original system is ensured due to the invertibility of the PDE backstepping transformation \eqref{eq:t1} and the ODE backstepping transformation \eqref{eq:nct1}-\eqref{eq:nctm}.

First, we study the stability of the PDE-ODE subsystem in the target system via Lyapunov analysis. Second, considering the Lyapunov analysis of the input ODE in Section \ref{control}, Lyapunov analysis of the overall ODE-PDE-ODE system is provided, where the control parameters $c_1, c_2,\cdots,c_m$ in the control law \eqref{eq:Uo} are determined.
\subsubsection{Lyapunov analysis for the PDE-ODE system}
Defining
\begin{align}
\Omega_0(t)&=\left\| {{w}(\cdot,t)}\right\|^2+\left\| {{w_x}(\cdot,t)}\right\|^2+ \left| {X(t)} \right|^2,\label{eq:norm1}
\end{align}
consider now a Lyapunov function
\begin{align}
V_1(t) &= {{X}^T}(t)PX(t) + \frac{a_0}{2}\| {w{{(\cdot,t)}}}\|^2+ \frac{a_1}{2}\|{w_x{{(\cdot,t)}}}\|^2\label{eq:V1}
\end{align}
where the matrix $P = {P^T} > 0$ is the solution to the Lyapunov equation $P(A +BK) + {(A +BK)^T}P =  - Q$, for some $Q = {{Q}^T} > 0$. The positive parameters $a_0,a_1$ are to be chosen later.

From \eqref{eq:norm1}, we have
\begin{align}
\theta_{01}\Omega_0(t)\le V_1(t)\le \theta_{02}\Omega_0(t),
\end{align}
where ${\theta_{01}}=\min \left\{ {\lambda _{\min }}(P),\frac{a_0}{2},\frac{a_1}{2}\right\}>0$, ${\theta_{02}} = \max \left\{ {\lambda _{\max }}(P),\frac{a_0}{2},\frac{a_1}{2}\right\}>0$.

Applying Agmon's inequality, Young's inequality and
Cauchy-Schwarz inequality, taking the derivative of $V_1(t)$ along the
trajectories of \eqref{eq:target1}-\eqref{eq:target3}, we have
\begin{align}
&\dot V_1(t)\le  - \bigg(\frac{{{a_1}q}}{2} - \frac{{4|PB{|^2}}}{{{\lambda _{\min }}(Q)}} - \left(\frac{1}{{4{r_0}}}a_0q+ \frac{1}{{4{r_1}}}{a_1}\right)\bigg){w_x}(0,t)^2 \notag\\
 &- \bigg((a_0 - {a_1})q - \left(\frac{1}{{4{r_0}}}a_0q + \frac{1}{{4{r_1}}}{a_1}\right)\bigg){\left\| {{w_x}} \right\|^2}\notag\\
 &- \bigg(\frac{1}{2}{a_1}q - \left(\frac{1}{{4{r_0}}}a_0q + \frac{1}{{4{r_1}}}{a_1}\right)\bigg){\left\| {{w_{xx}}} \right\|^2}\notag\\
  &- \frac{3}{4}{\lambda _{\min }}(Q)|X{(t)|^2} + {r_0}a_0qw{(1,t)^2} + {r_1}{a_1}{w_t}{(1,t)^2},
\label{eq:dV11}
\end{align}
where $-\|w_{xx}\|^2\le 2\|w_x\|^2-w_x(0,t)^2$ obtained from Agmon's inequality \cite{susto2010control,tang2011state} is used. $r_0,r_1$ are positive constants to be chosen later from Young's inequality.

Choosing parameters $a_0,a_1$ to satisfy
\begin{align}
a_1>\frac{{8|PB{|^2}}}{q{{\lambda _{\min }}(Q)}},~a_0>a_1,\label{eq:a1}
\end{align}
with sufficiently large $r_0,r_1$ in \eqref{eq:dV11}, we arrive at
\begin{align}
&\dot V_1(t)\le  - \bigg((a_0 - {a_1})q - \left(\frac{1}{{4{r_0}}}a_0q + \frac{1}{{4{r_1}}}{a_1}\right)\bigg){\left\| {{w_x}} \right\|^2}\notag\\
  &- \frac{3}{4}{\lambda _{\min }}(Q)|X(t)|^2-\bar\xi_a w_x(0,t)^2 \notag\\
  &+ {r_0}a_0qw{(1,t)^2} + {r_1}{a_1}{w_t}{(1,t)^2},
\end{align}
where $\bar\xi_a>0$.

Using Poincar$\acute e$ inequality, we obtain
\begin{align}
&\quad\dot V_1(t)\notag\\
&\le- \frac{1}{5}\bigg((a_0 - {a_1})q - \left(\frac{1}{{4{r_0}}}a_0q + \frac{1}{{4{r_1}}}{a_1}\right)\bigg){\left\| {{w_x}} \right\|^2}\notag\\
&\quad- \frac{4}{5}\bigg((a_0 - {a_1})q - \left(\frac{1}{{4{r_0}}}a_0q + \frac{1}{{4{r_1}}}{a_1}\right)\bigg){\left\| {{w_x}} \right\|^2}\notag\\
  &\quad- \frac{3}{4}{\lambda _{\min }}(Q)|X{(t)|^2}-\bar\xi_a w_x(0,t)^2\notag\\
  &\quad + {r_0}a_0qw{(1,t)^2} + {r_2}{a_1}{w_t}{(1,t)^2}\notag\\
&\le- \frac{1}{5}\bigg((a_0 - {a_1})q - \left(\frac{1}{{4{r_0}}}a_0q + \frac{1}{{4{r_1}}}{a_1}\right)\bigg)({\left\| {{w_x}} \right\|^2}+{\left\| {{w}} \right\|^2})\notag\\
&\quad- \frac{3}{4}{\lambda _{\min }}(Q)|X{(t)|^2} -\bar\xi_a w_x(0,t)^2\notag\\
&\quad+ {r_0}a_0qw{(1,t)^2} + {r_1}{a_1}{w_t}{(1,t)^2}\notag\\
&\le- \lambda_1 V_1(t)-\bar\xi_a w_x(0,t)^2\notag\\
&\quad+ {r_0}a_0qw{(1,t)^2} + {r_1}{a_1}{w_t}{(1,t)^2},\label{eq:dV1final}
\end{align}
for some positive $\lambda_1$.
\subsubsection{Lyapunov analysis for the overall ODE-PDE-ODE system}
Recalling \eqref{eq:V1},\eqref{eq:Vym}, and define a Lyapunov function
\begin{align}
V(t)=V_1(t)+{R_y}V_{ym}(t).\label{eq:Vall}
\end{align}
where $R_y>0$ is to be determined later.

Defining
\begin{align}
\Omega(t)&=\left\| {w(\cdot,t)}\right\|^2+\left\| {w_x(\cdot,t)}\right\|^2+ \left| {X(t)} \right|^2\notag\\&\quad+y_1(t)^2+\cdots+y_m(t)^2,\label{eq:norm2}
\end{align}
we have
\begin{align}
\theta_{1}\Omega(t)\le V(t)\le \theta_{2}\Omega(t)\label{eq:tVt}
\end{align}
with positive constants ${\theta _{1}} ,{\theta _{2}} $.

Taking the derivative of \eqref{eq:Vall} and using \eqref{eq:dV1final} and \eqref{eq:dVymfinal}, we get
\begin{align}
\dot V&\le  - \lambda_1 V_1(t)-\bar\xi_a w_x(0,t)^2+ {r_0}a_0qw{(1,t)^2} + {r_1}{a_1}{w_t}{(1,t)^2}\notag\\
&\quad-{R_y}c_1y_1(t)^2-{R_y}c_2y_2(t)^2-\cdots-{R_y}c_{m}y_{m}(t)^2\notag\\
&\quad+{R_y}y_{m}(t)\bigg(-\int_0^1 {\mathcal D(x){w}(x,t)dx}+ {\mathcal E} X(t)\bigg).\label{eq:dVall1}
\end{align}
Substituting \eqref{eq:nct1}-\eqref{eq:nct2} into \eqref{eq:dVall1}, applying Young's inequality, Cauchy-Schwarz inequality, we have
\begin{align}
\dot V&\le - \frac{\lambda_1}{2} V_1(t)- \left(\frac{1}{2}\lambda_1\theta_{01}-{R_y}\bar r_3\left|\mathcal E\right|^2\right){\left| {X(t)} \right|^2}\notag\\
&\quad-\left(\frac{1}{2}\lambda_1\theta_{01}-{R_y}\bar r_4\max\limits_{0\le x\le1}\{|{\mathcal D}(x)|\}\right)\int_0^1 {{w}{{(x,t)}^2}} dx\notag\\
&\quad-\left({R_y}c_1-2{r_1}{a_1}c_1^2-{r_0}a_0q\right)y_1(t)^2-\left({R_y}c_2-2{r_1}{a_1}\right)y_2(t)^2\notag\\
&\quad-{R_y}c_3y_3(t)^2-\cdots-{R_y}c_{m-1}y_{m-1}(t)^2\notag\\
&\quad-{R_y}\bigg(c_m-\frac{1}{4\bar r_3}-\frac{1}{4\bar r_4}\bigg)y_m(t)^2-\bar\xi_a w_x(0,t)^2.\label{eq:dVall3}
\end{align}
Positive constants $\bar r_3,\bar r_4$ should satisfy
\begin{align}
&\bar r_3<\frac{\lambda_1\theta_{01}}{2{R_y}\left|\mathcal E\right|^2},~\bar r_4<\frac{\lambda_1\theta_{01}}{2{R_y}\max\limits_{0\le x\le1}\{|{\mathcal D}(x)|\}}.
\end{align}
Choose the control parameter $c_m$ in the control law \eqref{eq:Uo} as
\begin{align}
c_m>\frac{1}{4\bar r_3}+\frac{1}{4\bar r_4},\label{eq:conditionc}
\end{align}
for $m>2$, where ${R_y}$ should be chosen as
\begin{align}
R_y>\max\bigg\{\frac{2{r_1}{a_1}c_1^2+{r_0}a_0q}{c_1},\frac{2r_1a_1}{c_2}\bigg\}.
\end{align}
$c_1,\cdots,c_{m-1}$ can be arbitrary positive constants. If $m=2$, $c_m$ should be chosen as
\begin{align*}
c_m>\frac{2r_1a_1}{R_y}+\frac{1}{4\bar r_3}+\frac{1}{4\bar r_4}
\end{align*}
for $m=2$, with choosing $R_y>\max\bigg\{\frac{2{r_1}{a_1}c_1^2+{r_0}a_0q}{c_1}\bigg\}$.

We thus achieve
\begin{align}
\dot{V} \leq - \lambda V-\bar\xi_a w_x(0,t)^2,\label{eq:Vfinal}
\end{align}
for some positive $\lambda$.

Note that $m\ge 2$ in the above-mentioned proof because $w_t(1,t)^2$ is represented by $y_1(t)^2,y_2(t)^2$ in \eqref{eq:dVall3}. If $m=1$, the following procedure can be adopt to deal with $w_t(1,t)^2$. Substituting \eqref{eq:Uno}-\eqref{eq:mL} into \eqref{eq:aftt1}, we have
\begin{align}
\partial _t^mw(1,t)&= - \int_0^1\mathcal D(y)w(y,t)dy+ \mathcal EX(t)\notag\\
  &\quad-\sum\limits_{i=0}^{m-1}(\alpha_i+c_m\beta_i)\partial_t^{i}w(1,t),\label{eq:wtm}
\end{align}
where $q^i\partial_x^{2i}=\partial_t^{i}$ is used according to \eqref{eq:target2}. Applying Cauchy-Schwarz inequality and $m=1$, we have
\begin{align}
\left|\partial _tw(1,t)\right|^2\le \xi_{a}\|w\|^2+\xi_{a}|X(t)|^2+\xi_{a}\left|w(1,t)\right|^2,\label{eq:wtm0}
\end{align}
for some positive $\xi_{a}$.
Therefore, $r_1a_1w_t(1,t)^2$ can be represented by $r_1a_1\xi_{a}\|w\|^2$+$r_1a_1\xi_{a}|X(t)|^2$+$r_1a_1\xi_{a}w(1,t)^2$, where $r_1a_1\xi_{a}|X(t)|^2$, $r_1a_1\xi_{a}\|w\|^2$ can be ``incorporated'' by $-|X(t)|^2$, $-\|w\|^2$ in $\dot V_1$, and  $(r _0a_0q+r_1a_1\xi_{a})w(1,t)^2=(r _0a_0q+r_1a_1\xi_{a})y_1(t)^2$ (plus another term $r _0a_0qw(1,t)^2$) can be ``incorporated'' by $-c_1y_1(t)^2$ in $\dot V_{ym}$ with large enough ${c_1}$. \eqref{eq:Vfinal} can then be obtained as well in the case of $m=1$.

From \eqref{eq:norm2}-\eqref{eq:tVt} and \eqref{eq:Vfinal}, we can obtain the exponential stability result in the sense of $\Omega(t)^{\frac{1}{2}}\le\Upsilon_{\Omega}\Omega(0)^{\frac{1}{2}}e^{-\lambda_{\Omega}t}$ with some positive $\Upsilon_{\Omega},\lambda_{\Omega}$. Moreover, through further analysis, the exponential stability result in the sense of higher-order norms up to $\|\partial_x^{2m}w(\cdot,t)\|$ also can be obtained, which can be clearly seen in the proof of the next lemma.

Using the invertibility between $(y_1(t),\cdots,y_m(t))$ and $(w(1,t),w_t(1,t),\cdots,\partial_t^{m-1}w(1,t))$ via the backstepping transformation \eqref{eq:nct1}-\eqref{eq:nctm}, and the invertibility between the target system-$(w(x,t),X(t))$ and the original system $(u(x,t),X(t))$ via the backstepping transformation \eqref{eq:t1}, recalling \eqref{eq:o4}-\eqref{eq:Az}, we can conclude that the $(u(x,t),X(t),Z(t))$ system is exponentially stable in the sense of \eqref{eq:T1}-\eqref{eq:Theo2}.

The proof of Theorem \ref{main} is completed.
\end{proof}
\subsection{Boundedness and Exponential convergence of the control input $U(t)$}\label{sec:EBU(t)}
In the last subsections, we have proposed the state-feedback control law and proved all PDE and ODE states are exponentially stable in the origin in the state-feedback closed-loop system. In this subsection, we would like to prove the exponential convergence and boundedness of the control input $U(t)$ \eqref{eq:Uo} in the closed-loop system.

To investigate the boundedness and exponential convergence of the control input \eqref{eq:Uo} where the highest-order-derivative terms are $\partial_x^{2m-1} u(0,t)$, $\partial_x^{2m-1} u(1,t)$, we propose a lemma first, where we estimate the $L_2$ norm of the states up to $2m$-order spatial derivatives $\partial_x^{2m} u(x,t)$.
\begin{lem}\label{lem:2}
Consider the closed-loop system consisting of the plant \eqref{eq:o1}-\eqref{eq:o5} and the control input \eqref{eq:Uo}-\eqref{eq:bF} with some control parameters $c_1,\cdots,c_m$, and initial values $u(x,0)\in H^{2m}(0,1)$.

i) There exist constants $\Upsilon_{2,3}>0$ and $\lambda_{2,3}>0$ such that
\begin{align}
\left(\|\partial_x^{2} u(\cdot,t)\|^2+\|\partial_x^{3} u(\cdot,t)\|^2\right)^{\frac{1}{2}}\le \Upsilon_{2,3}e^{-\lambda_{2,3} t},\label{eq:lemma1norm}
\end{align}
where $\Upsilon_{2,3}$ only dependents on initial values. Note that if $m=1$, the initial value $u(x,0)$ should be in $H^{3}(0,1)$ considering \eqref{eq:lemma1norm}. Using the above result, we can then obtain the following result:

ii) There exist constants $\Upsilon_{2m}>0$ and $\lambda_{2m}>0$ such that
\begin{align}
\sum_{i=2}^{2m}\|\partial_x^{i} u(\cdot,t)\|&\le \Upsilon_{2m}e^{-\lambda_{2m} t},\label{eq:lemma2norm}
\end{align}
where $\Upsilon_{2m}$ only depends on initial values.
\end{lem}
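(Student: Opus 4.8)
The plan is to prove \eqref{eq:lemma1norm}--\eqref{eq:lemma2norm} first for the target variable $w$ of \eqref{eq:t1}, where the PDE is the pure heat equation and $w(0,t)\equiv 0$, and then carry every estimate back to $u$ through the differentiated inverse transformation \eqref{eq:uxk}. Two structural facts drive the argument. First, each spatial derivative again solves the heat equation, $\partial_t(\partial_x^k w)=q\,\partial_x^{k+2}w$, and at $x=0$ the even-order traces vanish identically, $\partial_x^{2j}w(0,t)=q^{-j}\partial_t^j w(0,t)=0$. Second, at $x=1$ the even-order traces are $\partial_x^{2j}w(1,t)=q^{-j}\partial_t^j w(1,t)$, and these are already known to be exponentially small: for $j\le m-1$ from the $(y_1,\dots,y_m)$-estimate inside Theorem \ref{main}, and for $j=m$ from the closed-loop relation \eqref{eq:wtm} together with the decay of $\|w\|$, $|X|$ and of $\partial_t^i w(1,t)$, $i\le m-1$. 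Hence only the odd-order traces $\partial_x^{2j+1}w(1,t)$ are genuinely ``free'', and these will be removed by the heat dissipation through Agmon's and Young's inequalities.

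For part i) the cleanest route is to notice that differentiating the whole target closed loop \eqref{eq:target1}--\eqref{eq:target3}, \eqref{eq:aftt1} once in $t$ reproduces, verbatim, a system of the same form in the variables $(w_t,\dot X)$: $\ddot X=(A+BK)\dot X+B\,w_{tx}(0,t)$, $\partial_t w_t=q\,\partial_x^2 w_t$, $w_t(0,t)=0$, and the boundary ODE at $x=1$ carries the same operators $\mathcal B,\mathcal C,\mathcal D,\mathcal E$ with input $\dot U=\mathcal L w_t(1,t)-\mathcal C w_t(0,t)$, which is precisely the backstepping law \eqref{eq:Uno} for it. Theorem \ref{main} then gives $\|w_t(\cdot,t)\|^2+\|w_{tx}(\cdot,t)\|^2\le\Upsilon e^{-\lambda t}$ as soon as the data of this differentiated system are finite; since $w_t(\cdot,0)=q\,\partial_x^2 w(\cdot,0)$ and $w_{tx}(\cdot,0)=q\,\partial_x^3 w(\cdot,0)$, this needs $w(\cdot,0)\in H^3(0,1)$, which $u(\cdot,0)\in H^{2m}(0,1)$ supplies when $m\ge2$ and which is the reason $H^3$ is assumed for $m=1$. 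Because $w_t=q\,\partial_x^2 w$ and $w_{tx}=q\,\partial_x^3 w$, this is \eqref{eq:lemma1norm} for $w$; evaluating \eqref{eq:uxk} at $k=2,3$ and bounding its kernel integrals and boundary-point terms by $\|w\|,\|w_x\|,\|\partial_x^2 w\|,|X|$ — all exponentially small by Theorem \ref{main} and the line just proved — transfers it to $u$. (The same conclusion, with the same regularity demand, also follows from a direct estimate on $V_2=V+\frac{b_0}{2}\|\partial_x^2 w\|^2+\frac{b_1}{2}\|\partial_x^3 w\|^2$ with $V$ as in Theorem \ref{main}: the $x=0$ boundary terms vanish, the $x=1$ ones split by Young's inequality into exponentially small traces plus remainders absorbed by the dissipation via Agmon's inequality, yielding $\dot V_2\le-\lambda V_2+g(t)$ with $g$ exponentially decaying and hence \eqref{eq:lemma1norm} by a comparison argument.)

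For part ii) I would bootstrap the spatial order upward. Assuming $\|\partial_x^\ell w(\cdot,t)\|$ already decays for $\ell\le k$ — the base case $k=3$ being part i) — I enlarge the running Lyapunov functional by $\frac{b_{k+1}}{2}\|\partial_x^{k+1}w\|^2$ (and, when needed, $\frac{b_{k+2}}{2}\|\partial_x^{k+2}w\|^2$); differentiating along the heat equation produces the fresh dissipation $-q\|\partial_x^{k+2}w\|^2$, the $x=0$ boundary contributions vanish (one factor being an even-order trace), and the single delicate $x=1$ boundary term is handled exactly as in part i) — one factor is an even trace $\partial_x^{2j}w(1,t)$ with $j\le m$, hence exponentially small by the structural facts above, while its odd companion is absorbed by the fresh dissipation after Young's and Agmon's inequalities. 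This gives $\dot V\le-\lambda V+g(t)$ with $g$ exponentially decaying, and the comparison lemma closes the induction through order $2m$. Finally I read \eqref{eq:uxk} at $i=2,\dots,2m$ — the kernels $\psi,\Gamma$ being smooth, so $\|\partial_x^i u\|\le\|\partial_x^i w\|+C\sum_{\ell<i}\|\partial_x^\ell w\|+C|X|$ — and sum to obtain \eqref{eq:lemma2norm}, with $\Upsilon_{2m}$ depending only on the $H^{2m}$ initial data.

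The hard part is closing the estimate at the maximal order $2m$: the energy identity for $\frac{d}{dt}\|\partial_x^{2m}w\|^2$ throws off the boundary trace $\partial_x^{2m+1}w(1,t)$, and since we have committed to only $H^{2m}$ regularity there is no dissipation term $\|\partial_x^{2m+2}w\|^2$ of the right sign left over to absorb what Agmon's inequality then demands. I expect to circumvent this by not estimating $\|\partial_x^{2m}w\|$ through its own energy but writing $\partial_x^{2m}w=q^{-1}\partial_t v$ with $v:=\partial_x^{2m-2}w$, and observing that $v$ — and likewise $v_t$ — solves the heat equation on $(0,1)$ with zero Dirichlet condition at $x=0$ and the exponentially decaying Dirichlet data $q^{-(m-1)}\partial_t^{m-1}w(1,t)$, resp. $q^{-(m-1)}\partial_t^m w(1,t)$, at $x=1$ (the latter exponentially small by \eqref{eq:wtm}), with $H^2$-, resp. $L^2$-, initial data furnished by $u(\cdot,0)\in H^{2m}(0,1)$; for any such problem a homogenizing lift $\hat v=v-x\,v(1,t)$ together with the estimate $\frac{d}{dt}\frac12\|\hat v\|^2\le-c\|\hat v\|^2+C|\dot v(1,t)|^2$ (Poincar\'{e}'s inequality) forces $L^2$ decay, so that $\|v_t\|=q\|\partial_x^{2m}w\|$ decays exponentially. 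A secondary nuisance, already visible in part i), is the bookkeeping that only the traces $\partial_t^j w(1,t)$ with $j\le m$ are known to decay — the borderline case $j=m$ resting on the closed-loop relation \eqref{eq:wtm} rather than on the backstepping variables — which is precisely what forces the $H^3$ hypothesis in part i) when $m=1$.
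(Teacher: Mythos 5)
Your proposal is correct in its overall architecture and shares the paper's skeleton (higher-order energy estimates for the target $w$-system, exploiting that the $x=0$ even-order traces vanish and the $x=1$ time-derivative traces $\partial_t^j w(1,t)$, $j\le m$, decay — the case $j=m$ via \eqref{eq:wtm} — then transferring to $u$ through the differentiated inverse transformation \eqref{eq:uxk}), but two ingredients genuinely differ. For part i) the paper augments the Lyapunov function of Theorem \ref{main} by $\|\partial_x^2 w\|^2+\|\partial_x^3 w\|^2$ directly (your parenthetical alternative), whereas your main route re-applies the closed-loop target estimate \eqref{eq:Vfinal} to the time-differentiated system $(w_t,\dot X)$ with input $\dot U=\mathcal L w_t(1,t)-\mathcal C w_t(0,t)$; this is legitimate and arguably cleaner, though strictly what you invoke is the target-system Lyapunov estimate from the proof of Theorem \ref{main} rather than the theorem itself, and the initial data of the differentiated boundary variables $\partial_t^j w(1,0)$, $j\le m$, should be read off from \eqref{eq:wtm} at $t=0$ so that only $H^{2m}$ (resp.\ $H^3$ for $m=1$) regularity is used. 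For part ii) your intermediate-order bootstrap in pairs is exactly the paper's; at the top order the paper instead pairs $\|\partial_x^{2m}w\|^2$ with $\|\partial_x^{2m+1}w\|^2$ in a functional $V_{2m}$ and bounds $\partial_t^{m+1}w(1,t)$ via the time derivative of \eqref{eq:wtm} (its inequality \eqref{eq:dwm1}), at the price of formally involving the $(2m+1)$-st spatial derivative (and its initial norm), while your Dirichlet-lift argument on $v_t$ with $v=\partial_x^{2m-2}w$ avoids any norm above order $2m$ and so sits more comfortably with the stated $H^{2m}$ hypothesis.

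One step in your top-order argument is asserted but not justified: your lift inequality for the $v_t$-problem has the forcing $C|\tfrac{d}{dt}v_t(1,t)|^2$, and $\tfrac{d}{dt}v_t(1,t)=q^{-(m-1)}\partial_t^{m+1}w(1,t)$, so the claimed $L^2$ decay of $v_t$ requires exponential decay of $\partial_t^{m+1}w(1,t)$ — a trace one order beyond the ones ($j\le m$) your structural facts cover. This is fillable exactly as in the paper: differentiate \eqref{eq:wtm} in time and use Cauchy--Schwarz to bound $|\partial_t^{m+1}w(1,t)|$ by $\|w_{xx}\|$, $|X(t)|$, $|w_x(0,t)|$ and $|\partial_t^i w(1,t)|$, $i\le m$, all of which decay exponentially by Theorem \ref{main}, your part i) (with Sobolev/Agmon giving the $C^1$ bound for $w_x(0,t)$), and \eqref{eq:wtm}; with that supplement your proof of Lemma \ref{lem:2} closes.
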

\begin{proof}
The proof is shown in the Appendix, where the exponential stability estimates of the target
system in the sense of higher-order norms up to $\|\partial_x^{2m} w(\cdot,t)\|$ are obtained via Lyapunov analysis, and only \eqref{eq:t1}-\eqref{eq:target3}, \eqref{eq:nct1}-\eqref{eq:nctm}, \eqref{eq:norm2}-\eqref{eq:tVt}, \eqref{eq:Vfinal}-\eqref{eq:wtm0} in the main body are used.
\end{proof}

We then prove the exponential convergence and boundedness of the control input $U(t)$ \eqref{eq:Uo} in the following theorem.
\begin{thm}\label{th:ecU}
In the closed-loop system including the plant \eqref{eq:o1}-\eqref{eq:o5} and the control input $U(t)$ \eqref{eq:Uo}, $|U(t)|$ is bounded by ${\Upsilon_{sf}}$ and is exponentially convergent to zero in the sense of $|U(t)|\le {\Upsilon_{sf}}e^{-{\lambda_{sf}}t}$
with the positive constants $\lambda_{sf}$ and $\Upsilon_{sf}$ {which only depends on initial values of the system}.
\end{thm}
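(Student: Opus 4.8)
The plan is to read off the structure of the control law \eqref{eq:Uo}--\eqref{eq:bF} and bound $|U(t)|$ term by term by quantities already shown to be exponentially decaying. Recalling that $\mathcal L=-\mathcal B-\sum_{i=0}^{m-1}q^i(\alpha_i+c_m\beta_i)\partial_x^{2i}$ from \eqref{eq:mL} and that the operators $\mathcal B$, $\mathcal C$ (and hence $\mathcal L$) are differential operators in $x$ of order at most $2m-1$, expanding \eqref{eq:Uo}--\eqref{eq:bF}, including letting the differential operators act on the kernel integrals via Leibniz differentiation, shows that $U(t)$ is a finite linear combination of three kinds of terms: (a) pointwise boundary values $\partial_x^i u(1,t)$ and $\partial_x^i u(0,t)$ for $i=0,\dots,2m-1$ (the highest orders coming from $\mathcal L u(1,t)$ and $\mathcal C u(0,t)$, with $\bar F$ contributing only up to $\partial_x^{2m-2}u(0,t)$); (b) the ODE state $X(t)$; and (c) residual integral terms $\int_0^1 g(y)u(y,t)\,dy$ with $g$ a fixed bounded kernel built from $\phi$ and its $x$-derivatives evaluated at $x=1$, which by the Cauchy--Schwarz inequality are bounded by $C\|u(\cdot,t)\|$.

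Next I would dominate the boundary terms by Sobolev norms using Agmon's inequality: for each $i$,
\begin{align*}
|\partial_x^i u(0,t)|^2+|\partial_x^i u(1,t)|^2\le C\big(\|\partial_x^i u(\cdot,t)\|^2+\|\partial_x^{i+1} u(\cdot,t)\|^2\big).
\end{align*}
Since $i$ ranges over $0,\dots,2m-1$, this needs $\|\partial_x^j u(\cdot,t)\|$ for $j=0,\dots,2m$ — precisely the range covered by Theorem \ref{main} (for $j=0,1$, together with $|X(t)|$) and by Lemma \ref{lem:2} (for $j=2,\dots,2m$). By those two results each such norm, and $|X(t)|$, is bounded by a term of the form $\Upsilon\,e^{-\lambda t}$ with $\Upsilon$ depending only on the initial data $u(\cdot,0)\in H^{2m}(0,1)$, $X(0)$, $Z(0)$, and $\lambda\in\{\lambda_s,\lambda_{2,3},\lambda_{2m}\}$.

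Combining the two steps, $|U(t)|$ is bounded by a sum of fixed constants (depending on the system data, the control gains $c_1,\dots,c_m$, $K$, and the kernels $\phi,\Phi$) times exponentially decaying norms, hence $|U(t)|\le \Upsilon_{sf}\,e^{-\lambda_{sf}t}$, where $\lambda_{sf}=\min\{\lambda_s,\lambda_{2,3},\lambda_{2m}\}$ and $\Upsilon_{sf}$ is assembled from those operator coefficients and from the initial-data-dependent constants of Theorem \ref{main} and Lemma \ref{lem:2}; thus $\Upsilon_{sf}$ depends only on the initial values. Boundedness $|U(t)|\le \Upsilon_{sf}$ on $[0,\infty)$ is then immediate since $e^{-\lambda_{sf}t}\le 1$ for $t\ge 0$.

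The genuinely hard work is not in this theorem but upstream, namely in Lemma \ref{lem:2}: propagating the closed-loop exponential decay to the higher-order spatial norms $\|\partial_x^j w(\cdot,t)\|$ up to $j=2m$ by repeatedly differentiating the target system \eqref{eq:target1}--\eqref{eq:target3} in $x$, running fresh Lyapunov estimates while tracking the boundary contributions through \eqref{eq:aftt1}, and then transferring the estimates back to $u$ via the bounded invertible transformation \eqref{eq:t1}--\eqref{eq:tI}. Granting that lemma, the proof here is essentially bookkeeping; the only point requiring care is verifying that no spatial derivative of order higher than $2m-1$ enters $U(t)$ pointwise, so that $H^{2m}$ regularity of the initial data together with Agmon's inequality is exactly enough — which is precisely why $\mathcal B$, $\mathcal C$, $\mathcal L$ were arranged to have order at most $2m-1$ and why Lemma \ref{lem:2} estimates norms up to $\partial_x^{2m}u$.
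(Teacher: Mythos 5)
Your proposal is correct and follows essentially the same route as the paper: invoke Theorem \ref{main} together with Lemma \ref{lem:2} to get exponential decay of $\sum_{i=0}^{2m}\|\partial_x^i u(\cdot,t)\|$ and $|X(t)|$, then pass to pointwise control of the boundary traces up to order $2m-1$ via a Sobolev/Agmon embedding and bound the kernel integrals by $\|u(\cdot,t)\|$. Your version merely spells out the term-by-term bookkeeping (Leibniz expansion of the operators, Cauchy--Schwarz on the integral terms) that the paper compresses into "Sobolev inequality gives the $C_{2m-1}$ estimate."
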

\begin{proof} Recalling Theorem \ref{main} and Lemma \ref{lem:2}, we have the exponential stability estimates in the sense of the norm $\sum_{i=0}^{2m}\|\partial_x^{i} u(\cdot,t)\|$. Using Sobolev inequality, we obtain the
exponential stability estimate in the sense of the norm $\|u(\cdot,t)\|_{C_{2m-1}}$, which gives the boundedness and exponential
convergence of $U(t)$ by recalling Theorem \ref{main}.
Proof of Theorem \ref{th:ecU} is completed.
\end{proof}

{Note that when we mention the exponential stability result/estimate in the sense of the norm $N_0(t)$, it means there exist positive constants $\bar\Upsilon> 0 $ and $\bar\lambda > 0$ such that $N_0(t)\le \bar\Upsilon e^{-\bar\lambda t}$ where $\bar\Upsilon$ only depends on initial values.}

{\textbf{Brief summary:} The backstepping approach \cite{krstic2008boundary} has been verified as a useful and new method for boundary control of distributed parameter systems. In the proposed method, a PDE backstepping transformation \eqref{eq:t1} is used to convert the original system to the system-$(w(x,t),X(t))$ \eqref{eq:target1}-\eqref{eq:target3} and \eqref{eq:aftt1}, where the state matrix $A+BK$ in \eqref{eq:target1} is Hurwitz and the left boundary condition is $w(0,t)=0$, which are ``stable like'', but the right boundary condition \eqref{eq:aftt1} being a $m$ order ODE-$w(1,t)$ with a number of PDE state perturbations. In order to form an exponentially stable target system, a ODE backstepping transformation \eqref{eq:nct1}-\eqref{eq:nctm} is adopted to convert the ODE states $w(1,t),\cdots,\partial_t^{m-1} w(1,t)$ at the right boundary to $y_1(t),\cdots,y_m(t)$, to build an exponentially stable system-($w(x,t),X(t),y_1(t),\cdots,y_m(t)$) under some control parameters $c_1,\cdots,c_m$ determined by Lyapunov analysis. Through the PDE backstepping and ODE backstepping transformations, the target system and the control law are obtained.

Comparing with a more naive approach, which is to design an intermediate control law for the PDE-ODE system and the intermediate control law to act as a reference to be tracked by the input ODE dynamics with $m$-order relative degree, the merit of the proposed design is avoiding taking $m$ time derivatives of the ``intermediate control law'' and producing high-order-time-derivatives of the state in the control law, especially high-order-time-derivatives of boundary states. }
\section{Output-feedback control design}\label{sec:of}
In Section \ref{eq:sp}, a state-feedback control law at the input ODE is designed to
exponentially stabilize the original ODE-PDE-ODE ``sandwiched'' system. However, the designed
state-feedback control law requires the distributed states $u(x,t)$ in a whole domain,
which are always difficult to obtain in practice. In this section, we propose
an observer-based output feedback control law which
requires only one boundary value as the measurement. An observer is designed
to reconstruct the distributed states $u(x,t)$ and two ODE states $Z(t),X(t)$ using only one boundary measurement $u_x(0, t)$ in Section \ref{sec:observer}. The observer-based output
feedback control law and the stability analysis of the output feedback closed-loop system are presented
in Section \ref{sec:op}.
\subsection{Observer design}\label{sec:observer}
Suppose only one boundary value $u_x(0, t)$ is available for measurement, an observer is designed to
reconstruct the states $u(x,t),Z(t),X(t)$ in this section.

Consider the observer
\begin{align}
&\dot{\hat X}(t) = A\hat X(t) + B{u_x}(0,t)+P_0(u_x(0,t)-\hat u_x(0,t)),\label{eq:ob1}\\
&{\hat u_t}(x,t) = q{\hat u_{xx}}(x,t)+p_1(x)(u_x(0,t)-\hat u_x(0,t)),\label{eq:ob2}\\
&\hat u(0,t) = {C_X}\hat X(t),~\hat u(1,t) = {C_z}\hat Z(t),\label{eq:ob4}\\
&\dot{\hat Z}(t) = {A_z}\hat Z(t) + {B_z}U(t)+P_2(u_x(0,t)-\hat u_x(0,t)),\label{eq:ob5}
\end{align}
where the constant vectors $P_0,P_2$ and the function $p_1(x)$ are to be determined.

Define the observer error as
\begin{align}
\left(\tilde u(x,t),\tilde Z(t), \tilde X(t)\right)=&(u(x,t),Z(t), X(t))\notag\\
&-\left(\hat u(x,t),\hat Z(t),\hat X(t)\right).\label{eq:er}
\end{align}
From \eqref{eq:o1}-\eqref{eq:o5} and \eqref{eq:ob1}-\eqref{eq:ob5}, then the observer error system can be written as
\begin{align}
\dot{\tilde X}(t) &= A\tilde X(t)-P_0\tilde u_x(0,t),\label{eq:e1}\\
{\tilde u_t}(x,t) &= q{\tilde u_{xx}}(x,t)-p_1(x)\tilde u_x(0,t),\label{eq:e2}\\
\tilde u(0,t) &= {C_X}\tilde X(t),~\tilde u(1,t) = {C_z}\tilde Z(t),\label{eq:e4}\\
\dot{\tilde Z}(t) &= {A_z}\tilde Z(t)-P_2\tilde u_x(0,t).\label{eq:e5}
\end{align}
We propose a transformation
\begin{align}
\tilde w(x,t)=\tilde u(x,t)+\vartheta (x)\tilde Z(t) + \theta (x)\tilde X(t),\label{eq:tra1}
\end{align}
where the row vectors $\vartheta (x)$ and $\theta (x)$ are to be determined, to convert the error system \eqref{eq:e1}-\eqref{eq:e5} to the target error system:
\begin{align}
{\tilde w_t}(x,t) &= q{\tilde w_{xx}}(x,t),\label{eq:w2}\\
\tilde w(0,t) &= 0,~~\tilde w(1,t) = 0,\label{eq:w4}\\
\left[ {\begin{array}{*{20}{c}}
{\dot {\tilde Z}(t)}\\
{\dot {\tilde X}(t)}
\end{array}} \right] &= \bigg(\left[ {\begin{array}{*{20}{c}}
{{A_z}}&0\\
0&A
\end{array}} \right] + \left[ {\begin{array}{*{20}{c}}
{{P_2}}\\
{{P_0}}
\end{array}} \right]\left[ {\begin{array}{*{20}{c}}
{\vartheta '(0)}\\{\theta '(0)}
\end{array}} \right]^T\bigg)
\left[ {\begin{array}{*{20}{c}}
{\tilde Z(t)}\\
{\tilde X(t)}
\end{array}} \right] \notag\\
&\quad- \left[ {\begin{array}{*{20}{c}}
{{P_2}}\\
{{P_0}}
\end{array}} \right]{{\tilde w}_x}(0,t).
\label{eq:w5}
\end{align}
By mapping \eqref{eq:e1}-\eqref{eq:e5} and  \eqref{eq:w2}-\eqref{eq:w5}, ${\vartheta (x)},\theta (x)$ should satisfy the following two ODEs:
\begin{align}
\vartheta (x){A_z} - q\vartheta ''(x) = 0,\label{eq:c1}\\
\vartheta (0) = 0,\vartheta (1) =  - {C_z},\label{eq:c2}\\
\theta (x)A - q\theta ''(x)=0,\label{eq:c3}\\
\theta (0) = -{C_X},\theta (1) = 0,\label{eq:c4}
\end{align}
and $p_1(x)$ should be chosen as
\begin{align}
p_1(x)=- \vartheta (x){P_2} - \theta (x){P_0}.\label{eq:p1}
\end{align}
Conditions \eqref{eq:c1}, \eqref{eq:c3}, \eqref{eq:p1} come from achieving \eqref{eq:w2} via \eqref{eq:tra1} from \eqref{eq:e1}-\eqref{eq:e5}. Conditions \eqref{eq:c2} and \eqref{eq:c4} result from \eqref{eq:w4}.

The solution to \eqref{eq:c1}-\eqref{eq:c2} can be represented by
\begin{align}
\vartheta (x)=\left[0,\vartheta' (0)\right]e^{Fx}\left[\begin{array}{c}
                                        I \\
                                        0
                                      \end{array}\right],\label{eq:s1}
\end{align}
with $F=[0,\frac{A_z}{q};I, 0]$ and $I$ being an identity matrix with the appropriate dimension. Especially, for $x = 1$, it holds that
\begin{align}
\vartheta (1)=\left[0,\vartheta' (0)\right]e^{F}\left[\begin{array}{c}
                                        I \\
                                        0
                                      \end{array}\right]= - {C_z}.
\end{align}
According to Lemma 1 in \cite{tang2011state}, when if the matrix $A_z$ has no
eigenvalues of the form $-\bar k^2\pi^2$ for $\bar k\in N$,
\begin{align}
G=[0,I]e^{F}[I,0]^T
\end{align}
is a nonsingular matrix. We then have $\vartheta' (0)=-C_zG^{-1}$.

Therefore the solution \eqref{eq:s1} is
\begin{align}
\vartheta (x)=\left[0,-C_zG^{-1}\right]e^{Fx}\left[\begin{array}{c}
                                        I \\
                                        0
                                      \end{array}\right].\label{eq:ss1}
\end{align}
Similarly, we can obtain the solution of \eqref{eq:c3}-\eqref{eq:c4} as
\begin{align}
\theta (x)=\left[-C_X,C_X\left[
                               \begin{array}{cc}
                                 I & 0 \\
                               \end{array}
                             \right]e^{F_1}\left[\begin{array}{c}
                                        I \\
                                        0
                                      \end{array}\right]
G_1^{-1}\right]e^{F_1x}\left[\begin{array}{c}
                                        I \\
                                        0
                                      \end{array}\right]\label{eq:ss2}
\end{align}
where $G_1=[0,I]e^{F_1}[I,0]^T$ and $F_1=[0,\frac{A}{q};I, 0]$.

Let $P_0,P_2$ to be chosen so that the matrix
\begin{align}
\bar A=A_a+\left[ {\begin{array}{*{20}{c}}
{{P_2}}\\
{{P_0}}
\end{array}} \right]B_a\label{eq:barA}
\end{align}
is Hurwitz,
where
\begin{align}
A_a=\left[ {\begin{array}{*{20}{c}}
{{A_z}}&0\\
0&A
\end{array}} \right],~
B_a=\left[ {\begin{array}{*{20}{c}}
{\vartheta '(0)}\\{\theta '(0)}
\end{array}} \right]^T,
\end{align}
$(A_a,B_a)$ being supposed observable.

Thus, all the quantities needed to implement the observer \eqref{eq:ob1}-\eqref{eq:ob5} are determined. We then give the following theorem which means the observer can effectively track the actual states in the plant \eqref{eq:o1}-\eqref{eq:o5}.
\begin{thm}\label{lem:erm}
Supposing that the matrices $A,A_z$ have no eigenvalues of the form $-\bar k^2\pi^2$, for $\bar k\in N$, consider the observer error system \eqref{eq:e1}-\eqref{eq:e5} obtained from the observer \eqref{eq:ob1}-\eqref{eq:ob5} and the plant \eqref{eq:o1}-\eqref{eq:o5} with initial values $\hat u(x,0)\in H^{{2m}}(0,1)$ and $u(x,0)\in H^{{2m}}(0,1)$.
Then, there exist constants $\Upsilon_{e}>0$ and $\lambda_{e}>0$ such that
\begin{align}
\Theta_{e}(t)\le& \Upsilon_{e}\bigg(\Theta_{e}(0)^2+\tilde u_x(0,0)^2\bigg)^{\frac{1}{2}}e^{-\lambda_{e} t},\label{eq:lemma4norm}
\end{align}
where
\begin{align}
\Theta_{e}(t)=\left(\sum\limits_{i=0}^{2m}\left\|\partial_x^{i}\tilde u(\cdot,t)\right\|^2+\left|\tilde Z(t)\right|^2+\left|\tilde X(t)\right|^2\right)^{\frac{1}{2}}.\label{eq:ernorm}
\end{align}
\end{thm}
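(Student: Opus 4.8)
The plan is to carry the observer error system over to the cascade target system \eqref{eq:w2}--\eqref{eq:w5} and run a Lyapunov argument parallel to the ones behind Theorem~\ref{main} and Lemma~\ref{lem:2}. First I would record that the transformation \eqref{eq:tra1}, with the kernels $\vartheta$, $\theta$ from \eqref{eq:ss1}--\eqref{eq:ss2} and the injection function $p_1$ from \eqref{eq:p1}, is well defined precisely because the non-resonance hypothesis on $A$ and $A_z$ makes $G$ and $G_1$ invertible (Lemma~1 of \cite{tang2011state}), and that it is a bounded, explicitly invertible map, $\tilde u(x,t) = \tilde w(x,t) - \vartheta(x)\tilde Z(t) - \theta(x)\tilde X(t)$. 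Since it only adds fixed smooth profiles times the finite-dimensional states, every spatial Sobolev norm of $\tilde u$ up to order $2m$ is equivalent to the corresponding norm of $\tilde w$ modulo $|\tilde Z(t)|+|\tilde X(t)|$, so it suffices to prove the estimate in the $\tilde w$ variable and then convert back.

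Second I would treat the PDE part of the target system, which is autonomous: $\tilde w_t=q\tilde w_{xx}$ with $\tilde w(0,t)=\tilde w(1,t)=0$. Because the boundary data are constant in $t$, each $\partial_t^j\tilde w$ solves the same heat equation with the same homogeneous Dirichlet conditions, and $\partial_x^{2j}\tilde w = q^{-j}\partial_t^j\tilde w$ forces $\partial_x^{2j}\tilde w(0,t)=\partial_x^{2j}\tilde w(1,t)=0$ for all $j$. With $V_{\tilde w}(t)=\sum_{j=0}^{2m}\frac{a_j}{2}\|\partial_x^j\tilde w(\cdot,t)\|^2$ augmented by a multiplier-type term of the form $b\int_0^1(1-x)\tilde w_x(x,t)^2\,dx$ to bookkeep $\tilde w_x(0,t)^2$, integration by parts together with Poincar\'e, Agmon, and $\|\partial_x^j\tilde w\|^2\le\|\partial_x^{j-1}\tilde w\|\,\|\partial_x^{j+1}\tilde w\|$ gives $\dot V_{\tilde w}\le -\lambda_{\tilde w}V_{\tilde w}-\mu\,\tilde w_x(0,t)^2$ for suitable $a_j,b,\mu>0$; this is the same bookkeeping already used for \eqref{eq:Vfinal} and in the appendix proof of Lemma~\ref{lem:2}, and it delivers exponential decay of $\sum_{j=0}^{2m}\|\partial_x^j\tilde w(\cdot,t)\|$.

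Third I would close the loop with the ODE block \eqref{eq:w5}. Take $P_a=P_a^{\mathrm T}>0$ solving $P_a\bar A+\bar A^{\mathrm T}P_a=-Q_a$, which is possible since $\bar A$ in \eqref{eq:barA} is Hurwitz, and form $V_e(t) = V_{\tilde w}(t) + R_a\,\xi(t)^{\mathrm T}P_a\,\xi(t)$ with $\xi(t)=[\tilde Z(t)^{\mathrm T},\tilde X(t)^{\mathrm T}]^{\mathrm T}$. Differentiating along \eqref{eq:w5}, the cross term $-2R_a\,\xi(t)^{\mathrm T}P_a[P_2^{\mathrm T},P_0^{\mathrm T}]^{\mathrm T}\tilde w_x(0,t)$ is split by Young's inequality into a piece absorbed by $-R_a\,\xi(t)^{\mathrm T}Q_a\xi(t)$ and a piece $c\,R_a\,\tilde w_x(0,t)^2$; choosing $R_a$ small enough that $cR_a<\mu$ gives $\dot V_e\le -\lambda_e V_e$, hence $V_e(t)\le V_e(0)e^{-\lambda_e t}$. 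Since $V_e$ is equivalent to $\Theta_e(t)^2$ up to the boundary term $\tilde w_x(0,t)^2$, and at $t=0$ one has $\tilde w_x(0,0)=\tilde u_x(0,0)+\vartheta'(0)\tilde Z(0)+\theta'(0)\tilde X(0)$, the initial value obeys $V_e(0)\le C(\Theta_e(0)^2+\tilde u_x(0,0)^2)$, which yields exactly \eqref{eq:lemma4norm}--\eqref{eq:ernorm} after converting back via \eqref{eq:tra1}. The main obstacle I anticipate is quantitative rather than structural: extracting the negative boundary term $-\mu\,\tilde w_x(0,t)^2$ from the PDE analysis with $\mu$ large enough to dominate the ODE-coupling term simultaneously across all $2m+1$ energy levels, i.e., making the weights $a_0,\dots,a_{2m},b,R_a$ mutually compatible; a secondary technical point is justifying the higher-order energy identities for merely $H^{2m}$ initial data, which is handled by the usual compatibility/density argument already implicit in Lemma~\ref{lem:2}.
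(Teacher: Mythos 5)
Your proposal is correct in substance, but it closes the loop differently from the paper, so it is worth comparing. The paper exploits the fact that after the transformation \eqref{eq:tra1} the target system is a pure cascade: the $\tilde w$-PDE \eqref{eq:w2}--\eqref{eq:w4} is completely autonomous with homogeneous Dirichlet data at \emph{both} ends, so each differentiated energy $\|\partial_x^i\tilde w\|^2$, $i=0,\dots,2m$, dissipates with no boundary leakage (all even-order spatial derivatives vanish at $x=0,1$, exactly the observation you make), and the unweighted sum $V_w=\sum_{i=0}^{2m}\|\partial_x^i\tilde w\|^2$ already satisfies $\dot V_w=-q\sum_{i=1}^{2m+1}\|\partial_x^i\tilde w\|^2\le-\lambda_wV_w$ by Poincar\'e --- no weights $a_j$, no multiplier, and no extracted boundary term are needed. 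The ODE block \eqref{eq:w5} is then handled sequentially: Sobolev embedding of the decaying $H^2$-estimate gives pointwise exponential decay of $\tilde w_x(0,t)$, and since $\bar A$ in \eqref{eq:barA} is Hurwitz, the finite-dimensional state driven by this exponentially decaying input decays as in \eqref{eq:ZX}; the estimate is then pulled back through \eqref{eq:tra1}. Your route instead builds a single strict Lyapunov function $V_e=V_{\tilde w}+R_a\xi^TP_a\xi$ and absorbs the cross term by a negative boundary term $-\mu\,\tilde w_x(0,t)^2$; this works, but your worry about tuning $a_0,\dots,a_{2m},b,R_a$ across all energy levels is a self-inflicted difficulty that the cascade argument avoids entirely, and your specific multiplier $b\int_0^1(1-x)\tilde w_x^2\,dx$ does not directly deliver a clean $-\mu\,\tilde w_x(0,t)^2$: integrating by parts produces the cross term $-\tilde w_x(0,t)\tilde w_{xx}(0,t)$ and a positive term $\tfrac12\tilde w_x(1,t)^2$ that still must be dominated. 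The easier fix within your scheme is to drop the multiplier and use Agmon's inequality, $\tilde w_x(0,t)^2\le 2\|\tilde w_x\|^2+\|\tilde w_{xx}\|^2$, against the interior dissipation already present, or simply to decouple the ODE as the paper does. Your accounting of initial data is fine (indeed your $V_e(0)$ is controlled by $\Theta_e(0)^2$ alone, which is stronger than, and consistent with, the bound \eqref{eq:lemma4norm} that also carries $\tilde u_x(0,0)^2$), and your use of the non-resonance hypothesis to make $G,G_1$ invertible matches the paper.
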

\begin{proof}
The proof is shown in the Appendix.
\end{proof}
\subsection{Observer-based output-feedback control law}\label{sec:op}
Replacing the states $u(x,t),X(t)$ in \eqref{eq:Uo} as $\hat u(x,t),\hat X(t)$ defined through the observer \eqref{eq:ob1}-\eqref{eq:ob5}, we obtain the output feedback control law:
\begin{align}
U_{of}(t)&=\mathcal L\hat u(1,t)-\mathcal C\hat u(0,t)-\left(\mathcal L\Phi(1)-\mathcal C\Phi(0)\right)\hat X(t)\notag\\
&\quad-\mathcal L\int_0^1 {{\phi}}(1,y)\hat u(y,t)dy\notag\\
&\quad+\hat F\left(\hat u(0,t),\cdots,\partial _x^{2m-2}\hat u(0,t)\right),\label{eq:Uof}
\end{align}
where $\hat F=\left(\mathcal C \int_0^x {{\phi}}(x,y)\hat u(y,t)dy\right)|_{x=0}$.

Under the proposed output-feedback control law, the closed-loop system which is shown in Fig. \ref{fig:close} is built. The exponential stability results of the closed-loop system are given in the following theorem.
\begin{figure}
\center
\includegraphics[width=8.5cm]{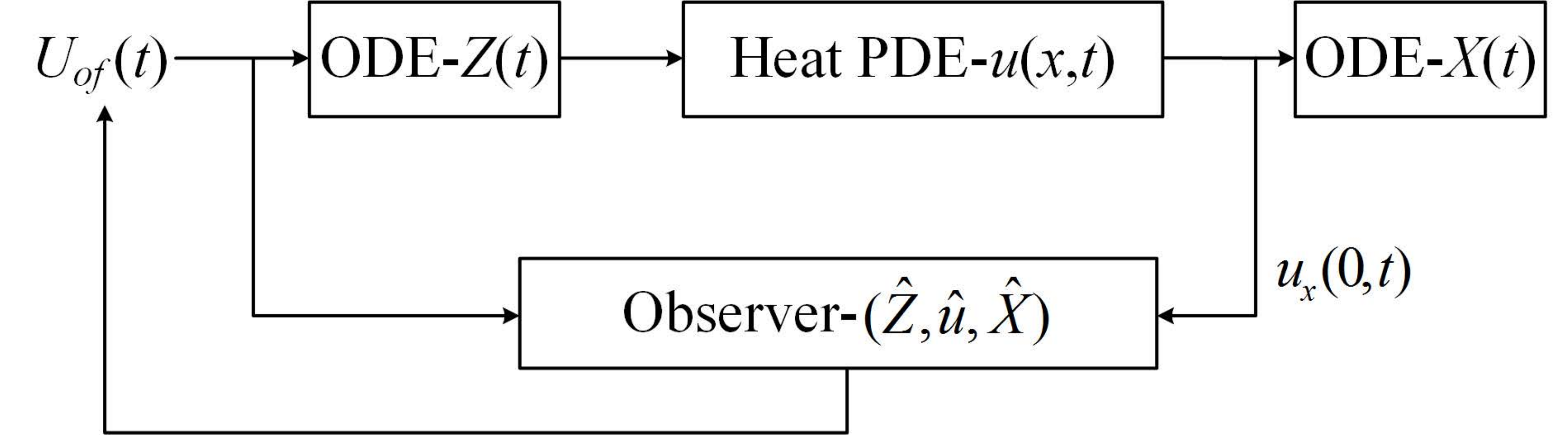}
\caption{The output-feedback closed-loop system consisting of the plant \eqref{eq:o1}-\eqref{eq:o5}, observer \eqref{eq:ob1}-\eqref{eq:ob5} and control input \eqref{eq:Uof}.}
\label{fig:close}
\end{figure}
\begin{thm}\label{th:main1}
Suppose that the matrices $A,A_z$ have no eigenvalues of the form $-\bar k^2\pi^2$, for $\bar k\in N$. For any initial value $(u(x,0),\hat u(x,0))\in H^{{2m}}(0,1)\times H^{{2m}}(0,1)$, the output-feedback closed-loop system consisting of the plant-$(u(x,t),X(t),Z(t))$ \eqref{eq:o1}-\eqref{eq:o5}, the observer-$(\hat u(x,t),\hat X(t),\hat Z(t))$ \eqref{eq:ob1}-\eqref{eq:ob5} and the control input \eqref{eq:Uof} has the following properties:

$i$). There exist positive constants $\Upsilon_{all}$ and $\lambda_{all}$ such that
\begin{align}
\bar\Omega(t)\le \Upsilon_{all}\left(\bar\Omega(0)^2+|\delta(0)|^2+\tilde u_x(0,0)^2\right)^{\frac{1}{2}}e^{-\lambda_{all} t}
\end{align}
where
\begin{align}
&\bar\Omega(t)=\bigg(\|{{u}(\cdot,t)}\|^2+ \|{u_x}(\cdot,t)\|^2+ {\left| {X(t)} \right|^2}+\left| {Z(t)} \right|^2\notag\\
&+\|{{\hat u}(\cdot,t)}\|^2+ \|{\hat u_x}(\cdot,t)\|^2+ {\left| {\hat X(t)} \right|^2}+\left| {\hat Z(t)}\right|^2\bigg)^{\frac{1}{2}},
\end{align}
and $\delta(t)$ will be shown later.

$ii$). The output-feedback control law \eqref{eq:Uof}, $|U_{of}(t)|$ is bounded by ${\Upsilon_{of}}$ and is exponentially convergent to zero in the sense of $|U_{of}(t)|\le {\Upsilon_{of}}e^{-{\lambda_{of}}t}$ with positive constants $\lambda_{of}$ and $\Upsilon_{of}$ which only depends on initial values of the system.
\end{thm}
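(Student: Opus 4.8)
The plan is to combine the separation principle implicit in backstepping observer design with the exponential estimates already established. First I would introduce the error variables $(\tilde u, \tilde Z, \tilde X)$ as in \eqref{eq:er} and observe that, because the observer feedback gains $P_0, P_1(x), P_2$ were chosen to decouple the error dynamics into the clean target system \eqref{eq:w2}-\eqref{eq:w5} with $\bar A$ Hurwitz, Theorem~\ref{lem:erm} gives $\Theta_e(t)\le \Upsilon_e(\Theta_e(0)^2+\tilde u_x(0,0)^2)^{1/2}e^{-\lambda_e t}$. In particular all spatial derivatives $\partial_x^i \tilde u$ up to order $2m$ decay exponentially. Next I would write the ``plant-through-observer'' states as $\hat u = u - \tilde u$, $\hat X = X - \tilde X$, and substitute into the control law: $U_{of}(t) = U(t)|_{u\to\hat u, X\to \hat X}$, so that $U_{of}(t)$ equals the state-feedback law $U(t)$ from \eqref{eq:Uo} evaluated at the true states, minus exactly the same linear-plus-integral operator applied to $(\tilde u,\tilde X)$. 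Thus the closed loop for $(\hat u,\hat X,\hat Z)$ (or equivalently for $(u,X,Z)$) is the state-feedback closed loop of Section~\ref{eq:sp} perturbed by the exponentially decaying observer error injected through bounded operators.

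For part $i$), the key step is a cascade/ISS argument. I would pick a composite Lyapunov function $V_{of}(t) = V(t) + R_e V_e(t)$, where $V(t)$ is the state-feedback Lyapunov function from \eqref{eq:Vall} (now built on the $\hat{}$-variables through the transformations \eqref{eq:t1} and \eqref{eq:nct1}-\eqref{eq:nctm}) and $V_e(t)$ is the observer-error Lyapunov function used to prove Theorem~\ref{lem:erm} (including the higher-order terms needed to dominate up to $\|\partial_x^{2m}\tilde w\|$). Taking the derivative, $\dot V$ picks up extra cross terms of the form (decaying observer-error quantities) $\times$ (state quantities); by Young's inequality these are bounded by $\frac{\lambda}{4}V(t) + c\,\Theta_e(t)^2$, while $\dot V_e \le -\lambda_e V_e + (\text{initial-data term})$. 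Choosing $R_e$ large enough, one gets $\dot V_{of}\le -\bar\lambda V_{of} + (\text{terms that are themselves exponentially small})$, and a comparison-lemma/variation-of-constants argument yields $V_{of}(t)\le \Upsilon(V_{of}(0)+\tilde u_x(0,0)^2)e^{-\bar\lambda t}$. Unwinding the norm equivalences \eqref{eq:tVt} and the invertibility of both backstepping transformations, together with $u = \hat u + \tilde u$ etc., gives the stated bound on $\bar\Omega(t)$ with $\delta(t)$ identified as the lifted observer-error state $(\tilde w(\cdot,t),\tilde Z(t),\tilde X(t))$ (or its higher-order companion).

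For part $ii$), the argument mirrors Theorem~\ref{th:ecU}. Since $U_{of}$ depends on $\hat u(1,t), \hat u(0,t), \partial_x^i \hat u(0,t)$ for $i\le 2m-1$, $\hat X(t)$, and $\int_0^1 \phi(1,y)\hat u(y,t)dy$, I would first upgrade the $L^2$-type decay to pointwise-in-$x$ decay of $\partial_x^i\hat u$ for $i\le 2m-1$ via the Sobolev/Agmon inequality, using that $\sum_{i=0}^{2m}\|\partial_x^i\hat u(\cdot,t)\|$ decays exponentially (which follows from part $i$) combined with the analogue of Lemma~\ref{lem:2} applied to the $\hat{}$-system, plus the decay of $\tilde u$). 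The boundary traces and the integral term are then each bounded by a constant times an exponentially decaying norm, so $|U_{of}(t)|\le \Upsilon_{of}e^{-\lambda_{of}t}$ with $\Upsilon_{of}$ depending only on the initial data $(u(\cdot,0),\hat u(\cdot,0))$ and $\tilde u_x(0,0)$ (which is itself determined by the initial data).

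The main obstacle I anticipate is bookkeeping the higher-order spatial regularity consistently through \emph{two} coupled subsystems: the control law needs $\partial_x^{2m-1}$ traces of $\hat u$, so one must propagate $H^{2m}$-type estimates not just for the plant/observer-error (as in Lemma~\ref{lem:2} and Theorem~\ref{lem:erm}) but for the observer state $\hat u$ itself, and the observer PDE \eqref{eq:ob2} has the non-homogeneous term $p_1(x)\tilde u_x(0,t)$ whose repeated spatial differentiation feeds back the decaying boundary quantity $\tilde u_x(0,t)$ with coefficients $p_1^{(j)}(x)$ — one must check these injections do not destroy the higher-order energy estimates. Handling this cleanly is exactly why the $H^{2m}$ hypotheses on \emph{both} $u(x,0)$ and $\hat u(x,0)$ are imposed, and why Lemma~\ref{lem:2}'s higher-order machinery (and its observer-side counterpart in Theorem~\ref{lem:erm}) is set up in the Appendix before this theorem is stated.
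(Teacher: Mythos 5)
Your proposal is correct and follows the same separation-principle strategy as the paper's proof; the difference is only in where the observer error is injected and how the cascade is formalized. The paper keeps the true plant states $(u,X,Z)$ and rewrites \eqref{eq:o5} under \eqref{eq:Uof} as \eqref{eq:dZdel}, i.e., the state-feedback law \eqref{eq:Uo} plus the mismatch $\delta(t)$ in \eqref{eq:del} (the operators $\mathcal L,\mathcal C$ and the kernels $\Phi,\phi$ applied to $(\tilde u,\tilde X)$ --- so $\delta$ is the control mismatch, not the lifted error state $(\tilde w,\tilde Z,\tilde X)$ as you suggest at the very end); this mismatch decays by Theorem \ref{lem:erm}, and the paper then invokes Theorem \ref{main} for the $\delta$-perturbed loop and recovers the observer states through $\hat u=u-\tilde u$, $\hat X=X-\tilde X$, $\hat Z=Z-\tilde Z$. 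You instead perturb the hat-system by the output-injection terms $p_1(x)\tilde u_x(0,t)$, $P_0\tilde u_x(0,t)$, $P_2\tilde u_x(0,t)$ and run a composite Lyapunov function $V+R_eV_e$ with Young-type absorption of the cross terms. The two decompositions are equivalent in substance; yours has the merit of making explicit the ISS-type step that the paper compresses into ``together with Theorem \ref{main}'' (the state-feedback Lyapunov analysis must indeed be rerun with the exponentially decaying forcing, exactly the $y_m(t)\delta(t)$-type cross term you describe), whereas the paper's choice of injecting $\delta$ only into the $Z$-channel avoids differentiating the injection $p_1(x)\tilde u_x(0,t)$ $m$ times when forming the right boundary ODE, which is precisely the bookkeeping burden you flag. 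For part $ii$) the two arguments coincide: higher-order $L^2$ estimates for $\hat u$ (the paper via $\hat u=u-\tilde u$ together with Lemma \ref{lem:2} and Theorem \ref{lem:erm}, you via a hat-system analogue of Lemma \ref{lem:2}) are upgraded to $C^{2m-1}$-type pointwise decay by Sobolev embedding, mirroring Theorem \ref{th:ecU}; neither route is more nor less rigorous than the other on this point.
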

\begin{proof}
\textbf{Proof of property $i$)}: Inserting the output-feedback control law \eqref{eq:Uof} into \eqref{eq:o5}, adding and subtracting terms, we have
\begin{align}
\dot Z(t) &= {A_z}Z(t) + {B_z}U(t)+B_z\delta(t),\label{eq:dZdel}
\end{align}
where
\begin{align}
\delta(t)=&\mathcal L\tilde u(1,t)-\mathcal C\tilde u(0,t)-\left(\mathcal L\Phi(1)-\mathcal C\Phi(0)\right)\tilde X(t)\notag\\
&-\mathcal L\int_0^1 {{\phi}}(1,y)\tilde u(y,t)dy\notag\\
&+\tilde F\left(\tilde u(0,t),\cdots,\partial _x^{2m-2}\tilde u(0,t)\right)\label{eq:del}
\end{align}
with $\tilde F=\left(\mathcal C \int_0^x {{\phi}}(x,y)\tilde u(y,t)dy\right)|_{x=0}$, and $U(t)$ is in the state-feedback form \eqref{eq:Uo}.

Recalling Theorem \ref{lem:erm}, we have $\delta(t)$ \eqref{eq:del} is exponentially convergent to zero. Together with Theorem \ref{main}, we obtain the exponential stability result as
\begin{align}
&\quad\|u(\cdot,t)\|^2+\|u_x(\cdot,t)\|^2+|X(t)|^2+|Z(t)|^2\notag\\
&\le\|u(\cdot,t)\|^2+\|u_x(\cdot,t)\|^2+|X(t)|^2+|Z(t)|^2+|\delta(t)|^2\notag\\
&\le \Upsilon_{uo}\bigg(\|u(\cdot,0)\|^2+\|u_x(\cdot,0)\|^2+|X(0)|^2\notag\\
&\quad+|Z(0)|^2+|\delta(0)|^2\bigg)e^{-\lambda_{uo}t},
\end{align}
for some positive $\Upsilon_{uo},\lambda_{uo}$.

Recalling Theorem \ref{lem:erm} and \eqref{eq:er}, we obtain
\begin{align}
&\|\hat u(\cdot,t)\|^2+\|\hat u_x(\cdot,t)\|^2+|\hat X(t)|^2+|\hat Z(t)|^2\notag\\
\le& \hat\xi_0\bigg(\|u(\cdot,t)\|^2+\|u_x(\cdot,t)\|^2+|X(t)|^2+|Z(t)|^2\notag\\
&+\|\tilde u(\cdot,t)\|^2+\|\tilde u_x(\cdot,t)\|^2+|\tilde X(t)|^2+|\tilde Z(t)|^2\bigg)\notag\\
\le& \Upsilon_{\hat u}\bigg(\|u(\cdot,0)\|^2+\|u_x(\cdot,0)\|^2+|X(0)|^2+|Z(0)|^2+|\delta(0)|^2\notag\\
&+\|\tilde u(\cdot,0)\|^2+\|\tilde u_x(\cdot,0)\|^2+|\tilde X(0)|^2+|\tilde Z(0)|^2\notag\\
&+\tilde u_x(0,0)^2\bigg)e^{-\lambda_{\hat u}t},
\end{align}
for some positive $\hat\xi_0, \Upsilon_{\hat u},\lambda_{\hat u}$. With representing the initial values $\|\tilde u(\cdot,0)\|^2+\|\tilde u_x(\cdot,0)\|^2+|\tilde X(0)|^2+|\tilde Z(0)|^2$ as $\|u(\cdot,0)\|^2+\|u_x(\cdot,0)\|^2+|X(0)|^2+|Z(0)|^2$+$\|\hat u(\cdot,0)\|^2+\|\hat u_x(\cdot,0)\|^2+|\hat X(0)|^2+|\hat Z(0)|^2$ by recalling \eqref{eq:er} and using Cauchy-Schwarz inequality, Property $i$) can then be proved.

\textbf{Proof of property $ii$)}: Recalling the exponential stability estimates $\sum_{i=0}^{2m}\|\partial_{x}^{i}u(\cdot,t)\|$ and $\sum_{i=0}^{2m}\|\partial_{x}^{i}\tilde u(\cdot,t)\|$ proved by Theorem \ref{main}, Lemma \ref{lem:2} and Theorem \ref{lem:erm}, we have the exponential stability estimate in the sense of the norm $\sum_{i=0}^{2m}\|\partial_{x}^{i}\hat u(\cdot,t)\|$. Using Sobolev inequality, we obtain the
exponential stability estimate in term of the norm $\|\hat u(\cdot,t)\|_{C_{2m-1}}$, together with the property $i$), which give the exponential
convergence of $U_{of}(t)$ which uses the signals $\sum_{i=0}^{2m-1}\partial_x^{i}\hat u(1,t)$, $\sum_{i=0}^{2m-1}\partial_x^{i}\hat u(0,t)$, $\hat X(t)$ and $\hat u(x,t)$.

The proof of Theorem \ref{th:main1} is completed.
\end{proof}
\section{Simulation}\label{sec:sim}
Consider the simulation example where the plant coefficients in \eqref{eq:o1}-\eqref{eq:o5} are $A=[1,1;1,0.5]$, $A_z=[0,1;1,1]$, $B_z=B=[0,1]^T$, $C_X=C_z=[1,0]^T$ and $q=1$. {Two ODEs sandwiching the heat PDE are considered as two-order systems here, i.e., $m=2$ in the above design and analysis, because the second-order ODE is a classic system which can describe many actuator and sensor dynamics.} The simulation is conducted based on the finite difference
method with dividing the spatial and time domains into a grid as $x_0,\cdots,x_f$ and $t_0,\cdots,t_{n^*}$ respectively, where the time step and space step sizes are $0.001$ and $0.05$. The initial conditions of the plant are defined as $u(x,0)=\sin(2\pi x)$, $X(0)=[x_1(0),x_2(0)]^T=[u(0,0),0]^T$, $Z(0)=[z_1(0),z_2(0)]^T=[u(1,0),0]^T$. The initial conditions of the observer \eqref{eq:ob1}-\eqref{eq:ob5} are $\hat u(x,0)=0$, $\hat X(0)=\hat Z(0)=[0,0]^T$. Choose the control parameters $c_1=c_2=3$, $K=[-10,-5]$, $P_0=[-2,-4]^T$ and $P_2=[-4,-12]^T$. Apply the output-feedback control law \eqref{eq:Uof} with $m=2$, which is constructed by the states $\sum_{i=0}^{3}\partial_x^{i}\hat u(1,t)$, $\sum_{i=0}^{3}\partial_x^{i}\hat u(0,t)$, $\hat X(t)$ and $\hat u(x,t)$ of the observer \eqref{eq:ob1}-\eqref{eq:ob5} built using the measurement $u_x(0,t)$, into the plant \eqref{eq:o1}-\eqref{eq:o5}. Note that the third-order spatial derivatives are determined by finite difference method such as
\begin{align}
&\quad\hat u_{xxx}(1,t_j)\notag\\
&=\frac{\hat u(x_f,t_j)-3\hat u(x_{f-1},t_j)+3\hat u(x_{f-2},t_j)-\hat u(x_{f-3},t_j)}{\Delta h^3},
\end{align}
where $\Delta h$ is spatial step size and $t_j$ is the current time point. $\hat u_{xxx}(1,t_j)$ is used to determine $U(t_{j+1})$.

The responses of the output-feedback closed-loop system are shown following.
\begin{figure}
\begin{minipage}{0.495\linewidth}
  \centerline{\includegraphics[width=4.9cm]{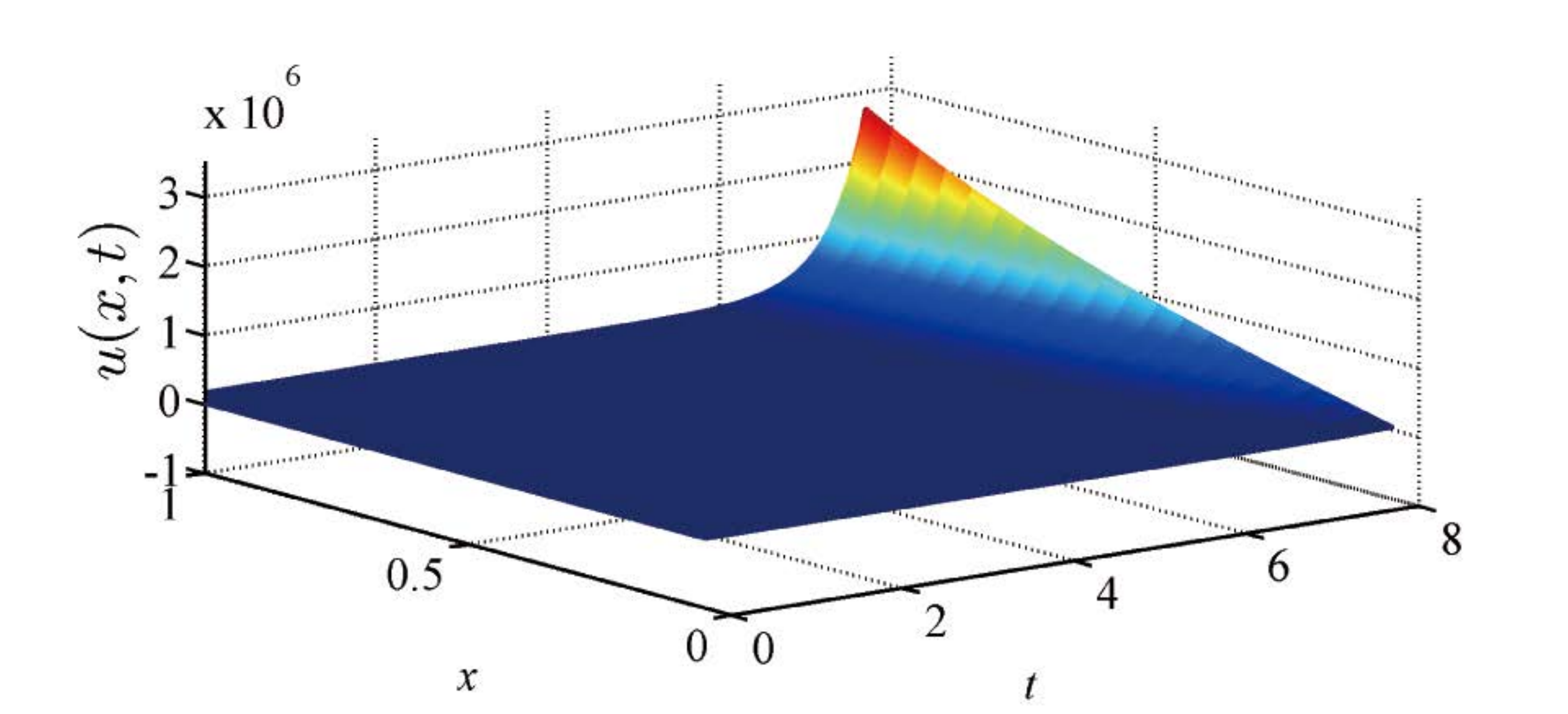}}
  \centerline{(a) uncontrolled case.}
\end{minipage}
\begin{minipage}{.495\linewidth}
  \centerline{\includegraphics[width=4.9cm]{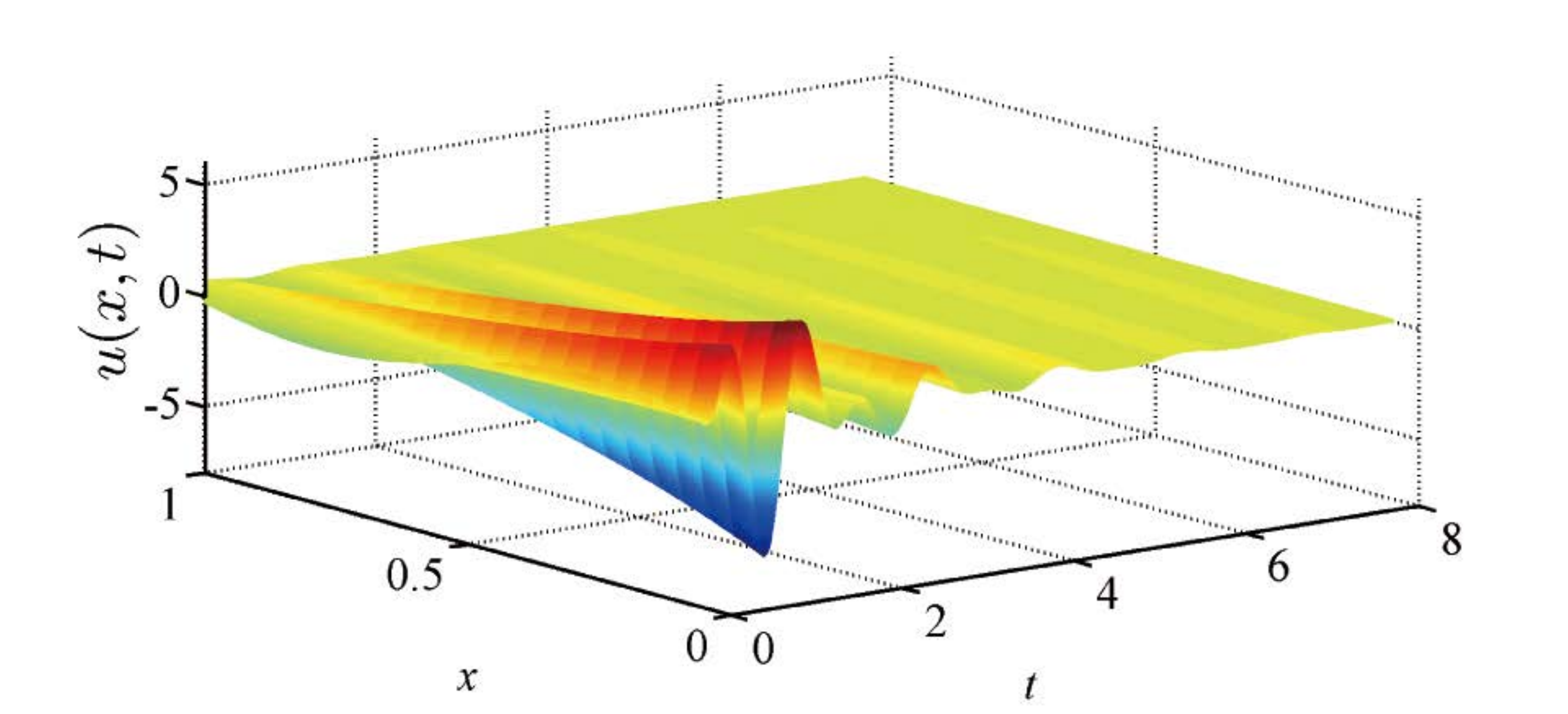}}
  \centerline{(b) controlled case.}
\end{minipage}
\caption{Responses of the heat PDE states $u(x,t)$ in the uncontrolled case and under the output-feedback control law \eqref{eq:Uof}.}
\label{fig:u}
\end{figure}
\begin{figure}
\begin{minipage}{0.49\linewidth}
  \centerline{\includegraphics[width=4.7cm]{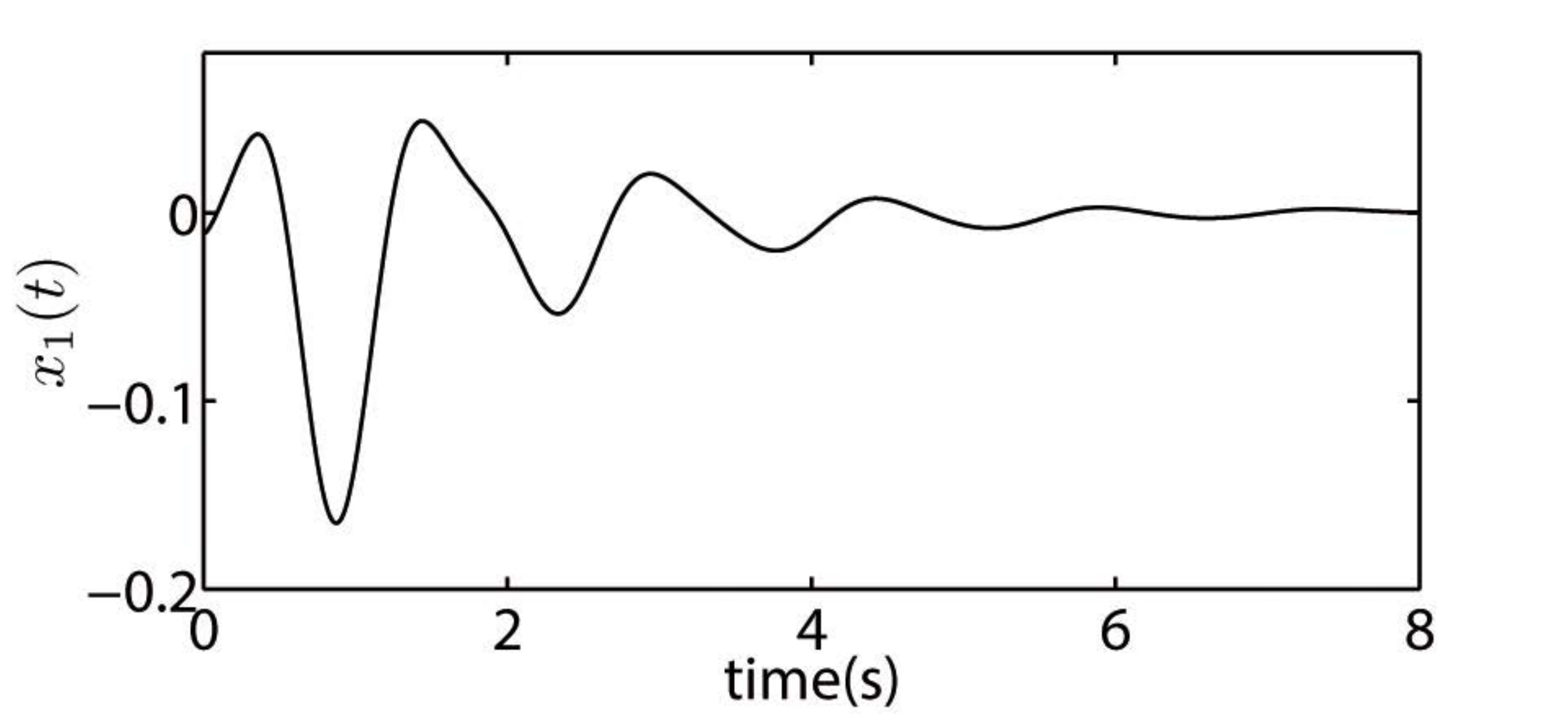}}
  \centerline{(a) $x_1(t)$.}
\end{minipage}
\begin{minipage}{.49\linewidth}
  \centerline{\includegraphics[width=4.7cm]{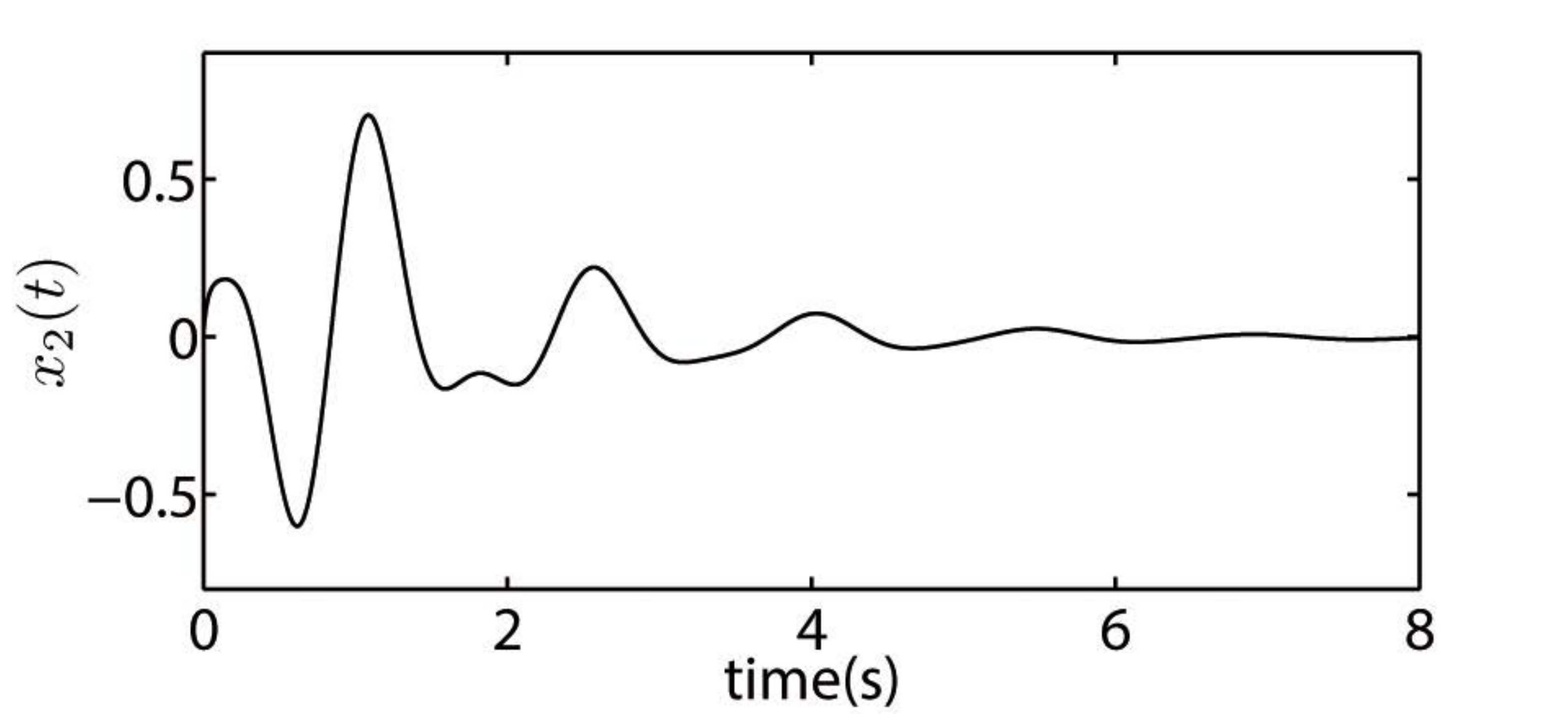}}
  \centerline{(b) $x_2(t)$.}
\end{minipage}
\caption{Responses of the ODE state $X(t)$ under the output-feedback control law \eqref{eq:Uof}.}
\label{fig:X}
\end{figure}
\begin{figure}[!ht]
\begin{minipage}{0.49\linewidth}
  \centerline{\includegraphics[width=4.7cm]{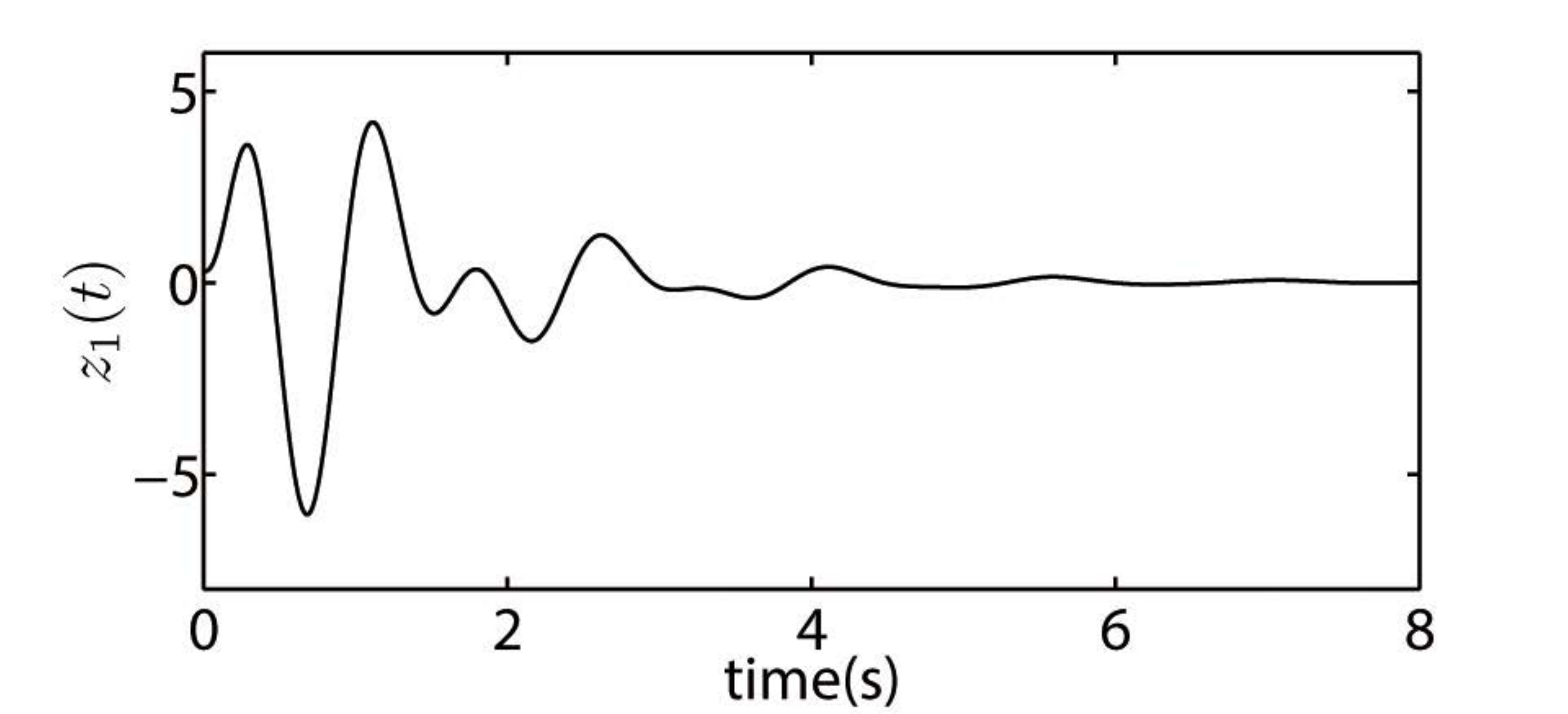}}
  \centerline{(a) $z_1(t)$.}
\end{minipage}
\begin{minipage}{.49\linewidth}
  \centerline{\includegraphics[width=4.7cm]{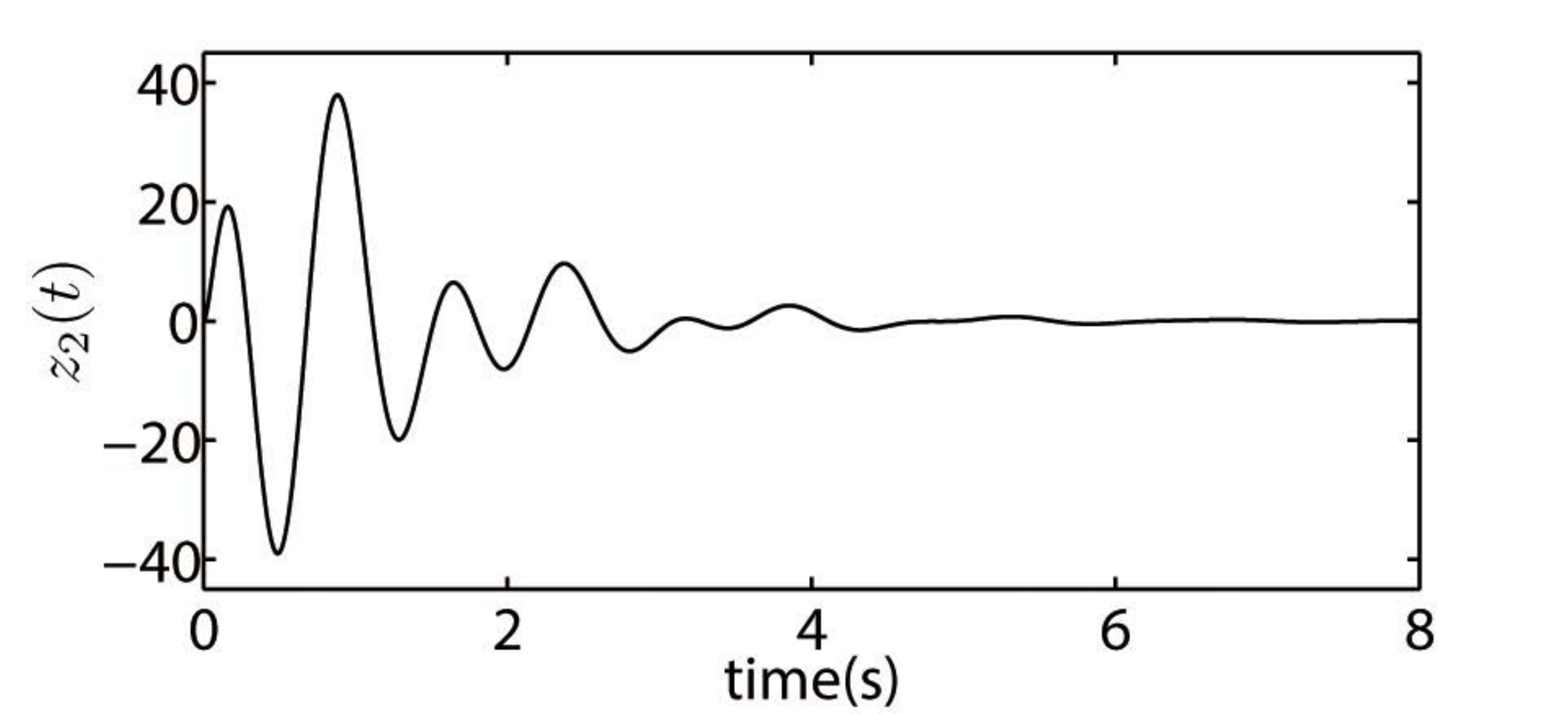}}
  \centerline{(b) $z_2(t)$.}
\end{minipage}
\caption{Responses of the ODE state $Z(t)$ under the output-feedback control law \eqref{eq:Uof}.}
\label{fig:Z}
\end{figure}
\begin{figure}[!ht]
\begin{minipage}{0.49\linewidth}
  \centerline{\includegraphics[width=4.9cm]{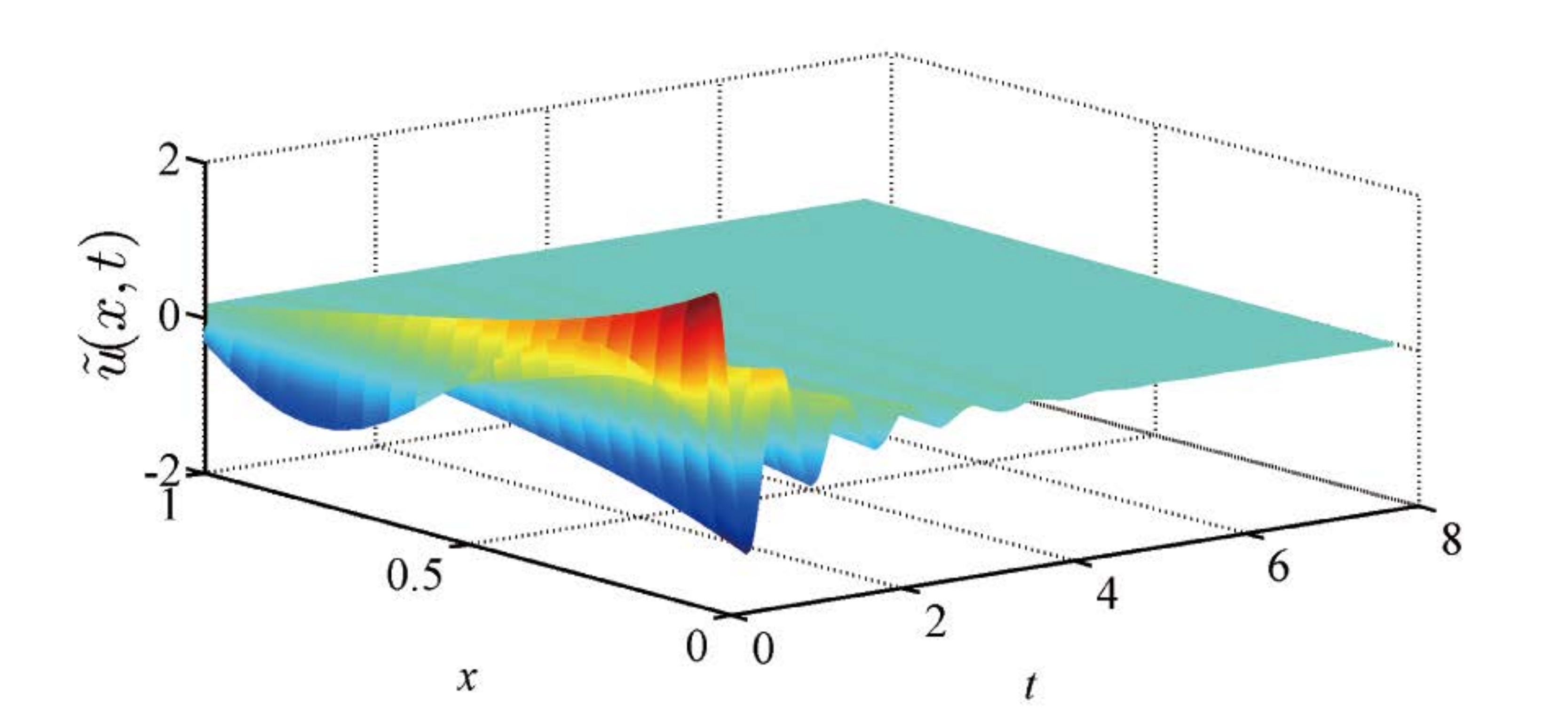}}
  \centerline{(a) observer error.}
\end{minipage}
\begin{minipage}{.49\linewidth}
  \centerline{\includegraphics[width=4.7cm]{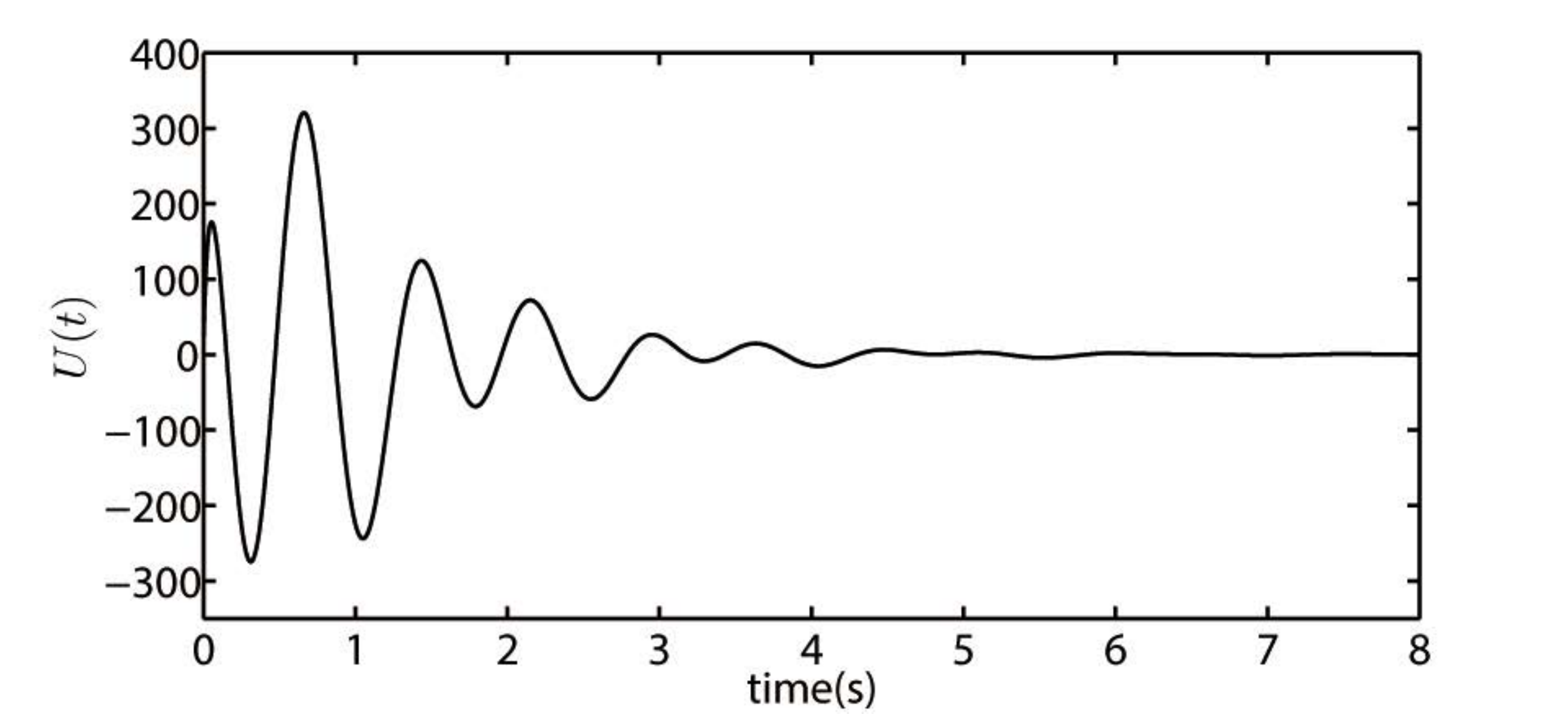}}
  \centerline{(b) control input.}
\end{minipage}
\caption{a) Observer errors $\tilde u=u-\hat u$ of the observer \eqref{eq:ob1}-\eqref{eq:ob5}. b) The output-feedback control input \eqref{eq:Uof}.}
\label{fig:f}
\end{figure}

As Fig.\ref{fig:u} shows, the response $u(x, t)$ of the heat PDE exhibits
unstable behaviour in the uncontrolled case while
the convergent manner of the response $u(x,t)$ is achieved
when we apply the proposed output feedback control law \eqref{eq:Uof}. Similarly, Figs. \ref{fig:X}-\ref{fig:Z} show the
ODE states $X(t)=[x_1(t),x_2(t)]^T$ and $Z(t) = [z_1(t),z_2(t)]^T$ are also convergent to zero in the output-feedback closed-loop system.
It can be seen in Fig. \ref{fig:f} a) that the observer errors $\tilde u(x,t)=u(x,t)-\hat u(x,t)$ also converge fast to zero in the closed-loop system. Moreover, Fig. \ref{fig:f} b) shows the output-feedback control input are bounded and convergent to zero.
\section{Conclusion and Future work}\label{sec:con}
In this paper, we present a methodology combining PDE backstepping and ODE backstepping to stabilize
a parabolic PDE sandwiched between two arbitrary-order ODEs. An observer is also designed only using one PDE boundary value $u_x(0,t)$ to reconstruct all PDE and ODE states. The observer-based output-feedback control law is proposed and the exponential stability of the closed-loop system is proved via Lyapunov analysis. Moreover, the boundedness and exponential convergence of the designed control input is also proved in this
paper. These theoretical results are verified via the simulation as well. {In the future work, more general ODE dynamics in the input channel will be considered in the control design}.
\section{Appendix}
\textbf{\emph{Proof of Lemma \ref{lem:2}:}}

\subsubsection{Proof of i)}
Taking 2 and 3 times derivative of \eqref{eq:target2} with
respect to $x$ respectively, we have
\begin{align}
w_{txx}(x,t)=q\partial_x^{4}w(x,t),\label{eq:2x}\\
w_{txxx}(x,t)=q\partial_x^{5}w(x,t).\label{eq:3x}
\end{align}

Define a Lyapunov function
\begin{align}
V_2(t)=R_0V(t)+\frac{a_2}{2}\|\partial_x^{2}w(\cdot,t)\|^2+\frac{a_3}{2}\|\partial_x^{3}w(\cdot,t)\|^2,\label{eq:V2}
\end{align}
where $a_2,a_3$ are positive constants to be determined later.

Defining
\begin{align}
\Omega_1(t)=\Omega(t)+\|\partial_x^{2}w(\cdot,t)\|^2+\|\partial_x^{3}w(\cdot,t)\|^2.\label{eq:omega1}
\end{align}
We have
\begin{align}
\theta_{11}\Omega_1(t)\le V_2(t)\le \theta_{12}\Omega_1(t),\label{eq:V2bound}
\end{align}
for some positive $\theta_{11},\theta_{12}$.

Taking the derivative of \eqref{eq:V2} along \eqref{eq:2x}-\eqref{eq:3x} and \eqref{eq:target3}, recalling \eqref{eq:Vfinal}, applying Agmon's inequality, Young's inequality and
Cauchy-Schwarz inequality, yields
\begin{align}
&\dot V_2(t)\notag\\
\le& -R_0\lambda V(t) - \left(\frac{{{a_3}q}}{2} - \frac{1}{{4{r_{2}}}}{a_2} - \frac{1}{{4{r_{3}}}}\frac{{a_3}}{q}\right){w_{xxx}}(0,t)^2\notag\\
 &- \left(({a_2} - {a_3})q - \frac{1}{{4{r_{2}}}}{a_2} - \frac{1}{{4{r_{3}}}}\frac{{a_3}}{q}\right){\left\| {{w_{xxx}}} \right\|^2}\notag\\
 &- \left(\frac{1}{2}{a_3}q - \frac{1}{{4{r_{2}}}}{a_2} - \frac{1}{{4{r_{3}}}}\frac{{a_3}}{q}\right){\left\| {{w_{xxxx}}} \right\|^2} \notag\\
 &+ {r_{2}}{a_2}{w_t}{(1,t)^2} + {r_{3}}\frac{{a_3}}{q}{w_{tt}}{(1,t)^2}-R_0\bar\xi_aw_x(0,t)^2,\label{eq:dv2f0}
\end{align}
where $r_2,r_3$ are positive constants from Young's inequality to be chosen later.

Recalling \eqref{eq:nct1}-\eqref{eq:nctm} and \eqref{eq:norm2}-\eqref{eq:tVt}, considering the case of $m\ge3$, we have
\begin{align}
\dot V_2(t)\le&-\frac{R_0}{2}\lambda V(t)\notag\\
&  - \left(\frac{{{a_3}q}}{2} - \frac{1}{{4{r_{2}}}}{a_2} - \frac{1}{{4{r_{3}}}}\frac{{a_3}}{q}\right){w_{xxx}}(0,t)^2\notag\\
 &- \left(({a_2} - {a_3})q - \frac{1}{{4{r_{2}}}}{a_2} - \frac{1}{{4{r_{3}}}}\frac{{a_3}}{q}\right){\left\| {{w_{xxx}}} \right\|^2}\notag\\
 &- \left(\frac{1}{2}{a_3}q - \frac{1}{{4{r_{2}}}}{a_2} - \frac{1}{{4{r_{3}}}}\frac{{a_3}}{q}\right){\left\| {{w_{xxxx}}} \right\|^2} \notag\\
 &-\left(\frac{R_0}{2}\lambda\theta_{1}-2{r_{2}}{a_2}c_1^2-3{r_{3}}\frac{1}{q}{a_3}(c_1^2-1)^2\right)y_1(t)^2\notag\\
 & -\left(\frac{R_0}{2}\lambda\theta_{1}-3{r_{3}}\frac{1}{q}{a_3}(c_1+c_2)^2-2{r_{2}}{a_2}\right)y_2(t)^2\notag\\
 &-\left(\frac{R_0}{2}\lambda\theta_{1}-3{r_{3}}\frac{1}{q}{a_3}\right)y_3(t)^2.\label{eq:dv2f}
\end{align}
Choosing $a_{2}>a_{3}$ and sufficiently large ${r_{2}}$, ${r_{3}}$, $R_0$, we arrive at
\begin{align}
\dot V_2(t)\le&-\frac{R_0}{2}\lambda V(t)\notag\\
&- \frac{1}{5}\left(({a_2} - {a_3})q - \frac{1}{{4{r_{2}}}}{a_2} - \frac{1}{{4{r_{3}}}}\frac{{a_3}}{q}\right)\notag\\
&\quad\times\left({\left\| {{w_{xx}}} \right\|^2}+{\left\| {{w_{xxx}}} \right\|^2}\right),
\end{align}
where Poincar$\acute e$ inequality is used.
Therefore, we obtain
\begin{align}
\dot V_{2}(t)\le- \lambda_{2} V_{2}(t)\label{eq:dv2final}
\end{align}
for some positive $\lambda_{2}$.
Recalling \eqref{eq:V2bound} and invertibility of the backstepping transformations, we obtain $i)$ in Lemma \ref{lem:2}.

In the case of $m=2$: \eqref{eq:wtm} and Cauchy-Schwarz inequality can be used to rewrite $\frac{{r_{3}}{a_3}}{q}w_{tt}(1,t)^2$ in \eqref{eq:dv2f} as $\frac{{r_{3}}a_3}{q}{}(\xi_{b}w(1,t)^2$+$\xi_{b}w_t(1,t)^2$+$\xi_{b}|X(t)|^2$+$\xi_{b}\|w\|^2)$ with some positive $\xi_{b}$, where $-\frac{R_0}{2}\lambda\theta_{1}(y_1(t)^2+y_2(t)^2)$ and $-\frac{R_0}{2}\lambda\theta_{1}(|X(t)|^2+\|w\|^2)$ derived from $-\frac{R_0}{2}\lambda V(t)$ with large enough $R_0$ can ``cancel'' $\frac{{r_{3}}a_3}{q}\xi_{b}w(1,t)^2$, $(\frac{{r_{3}}a_3}{q}\xi_{b}+r_2a_2)w_t(1,t)^2$ according to \eqref{eq:nct1}-\eqref{eq:nct2}, and $\frac{{r_{3}}a_3}{q}\xi_{b}|X(t)|^2$, $\frac{{r_{3}}a_3}{q}\xi_{b}\|w\|^2$ respectively. \eqref{eq:dv2final} can then also be obtained.

In the case of $m=1$: substituting $m=1$ into \eqref{eq:wtm} and taking the time derivative, applying Cauchy-Schwarz inequality and recalling \eqref{eq:wtm0}, we have
\begin{align}
w_{tt}(1,t)^2&= \xi_c\|w_{xx}\|^2+ \xi_c|X(t)|^2+\xi_cw_x(0,t)^2\notag\\
  &\quad+\xi_cw(1,t)^2+\xi_c\|w\|^2.\label{eq:wtm1a}
\end{align}
Substituting \eqref{eq:wtm0} and \eqref{eq:wtm1a} with \eqref{eq:nct1} into \eqref{eq:dv2f0}, we can obtain
\begin{align}
&\dot V_2(t)\le-\frac{R_0}{2}\lambda V(t)-\left(R_0\bar\xi_a-\frac{{r_{3}}{a_3}\xi_c}{q}\right)w_x(0,t)^2\notag\\
&  - \left(\frac{{{a_3}q}}{2} - \frac{1}{{4{r_{2}}}}{a_2} - \frac{1}{{4{r_{3}}}}\frac{{a_3}}{q}\right){w_{xxx}}(0,t)^2\notag\\
 &- \left(({a_2} - {a_3})q - \frac{1}{{4{r_{2}}}}{a_2} - \frac{1}{{4{r_{3}}}}\frac{{a_3}}{q}\right){\left\| {{w_{xxx}}} \right\|^2}\notag\\
 &- \left(\frac{1}{2}{a_3}q - \frac{1}{{4{r_{2}}}}{a_2} - \frac{1}{{4{r_{3}}}}\frac{{a_3}}{q}\right){\left\| {{w_{xxxx}}} \right\|^2} \notag\\
 &-\left(\frac{R_0}{2}\lambda\theta_{1}-{r_{2}}{a_2}\xi_a-\frac{{r_{3}}{a_3}\xi_c}{q}\right)y_1(t)^2\notag\\
 &-\left(\frac{R_0}{2}\lambda\theta_{1}-{r_{2}}{a_2}\xi_a-\frac{{r_{3}}{a_3}\xi_c}{q}\right)|X(t)|^2\notag\\
 &-\left(\frac{R_0}{2}\lambda\theta_{1}-{r_{2}}{a_2}\xi_a-\frac{{r_{3}}{a_3}\xi_c}{q}\right)\|w\|^2+\frac{{r_{3}}{a_3}\xi_c}{q}\|w_{xx}\|^2.
\end{align}
Choosing sufficiently large $R_0$ and using Poincar$\acute e$ inequality, we have
\begin{align}
&\dot V_2(t)\le-\frac{R_0}{2}\lambda V(t)\notag\\
&  - \left(\frac{{{a_3}q}}{2} - \frac{1}{{4{r_{2}}}}{a_2} - \frac{1}{{4{r_{3}}}}\frac{{a_3}}{q}\right)({w_{xxx}}(0,t)^2+\|w_{xxxx}\|^2)\notag\\
 &- \left(\frac{1}{5}({a_2} - {a_3})q - \frac{1}{{20{r_{2}}}}{a_2} - \frac{1}{{20{r_{3}}}}\frac{{a_3}}{q}\right){\left\| {{w_{xxx}}} \right\|^2}\notag\\
 &- \left(\frac{1}{5}({a_2} - {a_3})q - \frac{1}{{20{r_{2}}}}{a_2} - \frac{1}{{20{r_{3}}}}\frac{{a_3}}{q}-\frac{{r_{3}}{a_3}\xi_c}{q}\right)\|w_{xx}\|^2.
\end{align}
In order to make the coefficients before $w_{xxx}(0,t)^2+\|w_{xxxx}\|^2$, $\|w_{xxx}\|^2$ and $\|w_{xx}\|^2$ positive, $a_2, a_3$ and $r_2, r_3$ should be chosen satisfying
\begin{align}
\frac{\frac{q}{5}+\frac{1}{20r_3q}+\frac{r_3\xi_c}{q}}{\frac{q}{5}-\frac{1}{20r_2}}a_3<a_2<4r_2\left(\frac{q}{2}-\frac{1}{4qr_3}\right)a_3,\\
r_3>\frac{1}{2q^2},~~
r_2>\frac{\frac{1}{4}+\frac{1}{16r_3q^2}+\frac{5r_3\xi_c}{4q^2}}{\frac{q}{2}-\frac{1}{4qr_3}}+\frac{1}{4q}.
\end{align}
\eqref{eq:dv2final} is thus obtained as well in the case of $m=1$.

Therefore, \eqref{eq:dv2final} can be obtained when $m\ge 1$. The proof of the first result in Lemma \ref{lem:2} is completed.

\subsubsection{Proof of ii)}
Through similar processes from \eqref{eq:2x} to \eqref{eq:dv2final}, we can obtain the exponential stability estimates in the sense of $(\|\partial_x^{4}w(\cdot,t)\|^2+\|\partial_x^{5}w(\cdot,t)\|^2)^{\frac{1}{2}}$, $(\|\partial_x^{6}w(\cdot,t)\|^2+\|\partial_x^{7}w(\cdot,t)\|^2)^{\frac{1}{2}}$ and so on, i.e., the exponential stability estimates in the sense of $(\|\partial_x^{4}u(\cdot,t)\|^2+\|\partial_x^{5}u(\cdot,t)\|^2)^{\frac{1}{2}}$, $(\|\partial_x^{6}u(\cdot,t)\|^2+\|\partial_x^{7}u(\cdot,t)\|^2)^{\frac{1}{2}}$ and so on, because of the invertibility of the backstepping transformation.

\textbf{Next, we prove the exponential stability estimate in the sense of $(\|\partial_x^{2m-2}u(\cdot,t)\|^2+\|\partial_x^{2m-1}u(\cdot,t)\|^2)^{\frac{1}{2}}$.}

Taking $2m-2$ and $2m-1$ times derivative of \eqref{eq:target2} with respect to $x$ respectively, we have
\begin{align}
\partial_x^{2m-2}w_t(x,t)&=q\partial_x^{2m}w(x,t),\label{eq:2m-2}\\
\partial_x^{2m-1}w_t(x,t)&=q\partial_x^{2m+1}w(x,t).\label{eq:2m-1}
\end{align}
Apply a Lyapunov function
\begin{align}
{V_{{\rm{2m-2}}}}(t) = \frac{{{a_{2m-2}}}}{2}\| {\partial _x^{2m-2}{w}} (\cdot,t)\|^2+ \frac{{{a_{2m -1}}}}{2}\| {\partial _x^{2m-1}{w}} (\cdot,t)\|^2.\label{eq:V2m2}
\end{align}
Taking the derivative of \eqref{eq:V2m2}, we have
\begin{align*}
&{{\dot V}_{2m - 2}}(t)\notag\\
\le&  - \bigg(\frac{{{a_{2m - 1}}q}}{2} - \frac{{a_{2m - 2}}}{{4q^{m-2}{r_{2m - 2}}}}\notag\\
&\quad-\frac{{a_{2m - 1}}}{{4q^{m-1}{r_{2m - 1}}}}\bigg){\partial _x^{2m - 1}w(0,t)}^2\\
 &- \bigg(({a_{2m - 2}} - {a_{2m - 1}})q- \frac{{a_{2m - 2}}}{{4q^{m-2}{r_{2m - 2}}}}\notag\\
  &\quad- \frac{{a_{2m - 1}}}{{4q^{m-1}{r_{2m - 1}}}}\bigg){\left\| {\partial _x^{2m - 1}w} \right\|^2}\\
 &- \bigg(\frac{{a_{2m - 1}}q}{2} - \frac{{a_{2m - 2}}}{{4q^{m-2}{r_{2m - 2}}}}- \frac{{a_{2m - 1}}}{{4q^{m-1}{r_{2m - 1}}}}\bigg){\left\| {\partial _x^{2m}w} \right\|^2}\\
 &+ \frac{{r_{2m - 2}}{a_{2m - 2}}}{{{q^{m - 2}}}}{\partial _t^{m - 1}{w}(1,t)}^2+ \frac{{r_{2m - 1}}{a_{2m - 1}}}{{{q^{m - 1}}}}{\partial _t^mw(1,t)}^2.
\end{align*}
Note that ${\partial _x^{2m - 1}w(0,t)}^2\triangleq (\partial _x^{2m - 1}w(x,t)|_{x=0})^2$ and ${\partial _t^mw(1,t)}^2\triangleq(\partial _t^{m}w(x,t)|_{x=1})^2$.

We choose $a_{2m-2}>a_{2m-1}$ and sufficiently large ${r_{2m-2}}$, ${r_{2m-1}}$ to make
\begin{align}
&\quad\dot V_{2m-2}(t)\notag\\
&\le- \frac{1}{5}\bigg(({a_{2m - 2}} - {a_{2m - 1}})q- \frac{{a_{2m - 2}}}{{4q^{m-2}{r_{2m - 2}}}}\notag\\
  &\quad-\frac{{a_{2m - 1}}}{{4q^{m-1}{r_{2m - 1}}}}\bigg)\left({\left\| {\partial _x^{2m - 1}w} \right\|^2}+\left\| {\partial _x^{2m - 2}w} \right\|^2\right)\notag\\
 &\quad+ \frac{{r_{2m - 2}}{a_{2m - 2}}}{{{q^{m - 2}}}}{\partial _t^{m - 1}{w}(1,t)}^2+ \frac{{r_{2m - 1}}{a_{2m - 1}}}{{{q^{m - 1}}}}{\partial _t^mw(1,t)}^2\notag\\
&\le -\bar\lambda_{2m-2}V_{2m-2}(t)+ \frac{{r_{2m - 2}}{a_{2m - 2}}}{{{q^{m - 2}}}}{\partial _t^{m - 1}{w}(1,t)}^2\notag\\
&\quad+ \frac{{r_{2m - 1}}{a_{2m - 1}}}{{{q^{m - 1}}}}{\partial _t^mw(1,t)}^2,\label{eq:dv2m2}
\end{align}
for some positive $\lambda_{2m-2}$, where Poincar$\acute e$ inequality is used.

Applying Cauchy-Schwarz inequality into \eqref{eq:wtm}, we have
\begin{align}
\left|\partial _t^mw(1,t)\right|^2\le \xi\|w\|^2+\xi|X(t)|^2+\xi\sum_{i=0}^{m-1}\left|\partial _t^iw(1,t)\right|^2\label{eq:dwm}
\end{align}
with some positive $\xi$. By recalling \eqref{eq:nct1}-\eqref{eq:nctm}, \eqref{eq:norm2}-\eqref{eq:tVt} and \eqref{eq:Vfinal}, we obtain the exponential convergence of $\partial _t^mw(1,t)$ via \eqref{eq:dwm}, i.e.,
\begin{align}
\left|\partial _t^mw(1,t)\right|\le \bar\Upsilon_me^{-\xi_1t}=\eta_m(t),\label{eq:etam}
\end{align}
for some positive $\bar\Upsilon_m$ and $\xi_1$.

Define a Lyapunov function
\begin{align}
{V_{u1}}(t) &= V_{2m-2}(t)+\bar R_0V(t)+\frac{\bar R_1}{2}\eta_m(t)^2,\label{eq:Vmn1}
\end{align}
where $\bar R_0,\bar R_1$ are positive constants to be determined later.

Taking the derivative of \eqref{eq:Vmn1} and recalling \eqref{eq:Vfinal}, \eqref{eq:dv2m2}, \eqref{eq:etam}, we have
\begin{align}
&\dot V_{u1}(t) \le -\bar\lambda_{2m-2}V_{2m-2}(t)+ \frac{{r_{2m - 2}}{a_{2m - 2}}}{{{q^{m - 2}}}}{\partial _t^{m - 1}{w}(1,t)}^2\notag\\
&\quad-\left(\bar R_1\xi_1- \frac{{r_{2m - 1}}{a_{2m - 1}}}{{{q^{m - 1}}}}\right)\eta_m(t)^2-\bar R_0\lambda V(t).
\end{align}
Choosing large enough $\bar R_0,\bar R_1$, we obtain
\begin{align}
\dot V_{u1}(t)\le -\lambda_{u1}V_{u1}(t)\label{eq:dvu1}
\end{align}
for some positive $\lambda_{u1}$, where $-y_1^2,\cdots,-y_m^2$ in $-\frac{\bar R_0}{2}\lambda V(t)$ extracted from $-{\bar R_0}\lambda V(t)$ are used to ``cancel'' ${\partial _t^{m - 1}{w}(1,t)}^2$ by recalling \eqref{eq:norm2}-\eqref{eq:tVt} and \eqref{eq:nct1}-\eqref{eq:nctm}.

Therefore, we obtain the exponential stability estimate in the sense of $(\|\partial_x^{2m-2}u(\cdot,t)\|^2+\|\partial_x^{2m-1}u(\cdot,t)\|^2)^{\frac{1}{2}}$ by recalling \eqref{eq:dvu1}, \eqref{eq:Vmn1}, \eqref{eq:V2m2} and the invertibility of the backstepping transformation.

\textbf{Next, we prove the exponential stability estimate in the sense of $(\|\partial_x^{2m}u(\cdot,t)\|^2+\|\partial_x^{2m+1}u(\cdot,t)\|^2)^{\frac{1}{2}}$.}

Taking $2m$ and $2m+1$ times derivative of \eqref{eq:target2} with respect to $x$ respectively, we have
\begin{align}
\partial_x^{2m}w_t(x,t)&=q\partial_x^{2m+2}w(x,t),\label{eq:2m}\\
\partial_x^{2m+1}w_t(x,t)&=q\partial_x^{2m+3}w(x,t).\label{eq:2m1}
\end{align}

Consider now a Lyapunov function
\begin{align*}
{V_{{\rm{2m}}}}(t) = \frac{{{a_{2m}}}}{2}\| {\partial _x^{2m}{w}} (\cdot,t)\|^2+ \frac{{{a_{2m + 1}}}}{2}\| {\partial _x^{2m + 1}{w}} (\cdot,t)\|^2,
\end{align*}
where the positive parameters $a_{2m},a_{2m+1}$ are to be chosen later.
Taking the derivative of $V_{2m}(t)$ along the \eqref{eq:2m}-\eqref{eq:2m1} as
\begin{align*}
&\dot V_{2m}(t)\le- \bigg(({a_{2m}} - {a_{2m + 1}})q-\frac{{a_{2m}}}{{4{q^{m - 1}}{r_{2m}}}} \notag\\
 &\quad- \frac{{a_{2m + 1}}}{{4{q^m}{r_{2m+1}}}}\bigg){\left\| {\partial _x^{2m + 1}w} \right\|^2}- \bigg(\frac{1}{2}{a_{2m + 1}}q \notag\\
 &\quad- \frac{{a_{2m}}}{{4{q^{m - 1}}{r_{2m}}}}- \frac{{a_{2m + 1}}}{{4{q^m}{r_{2m+1}}}}\bigg){\left\| {\partial _x^{2m + 2}w} \right\|^2}\notag\\
 &\quad- \bigg(\frac{{{a_{2m + 1}}q}}{2}-\frac{{a_{2m}}}{{4{q^{m - 1}}{r_{2m}}}}- \frac{{a_{2m + 1}}}{{4{q^m}{r_{2m+1}}}}\bigg){\partial _x^{2m + 1}{w}(0,t)}^2\notag\\
 &\quad+\frac{ {r_{2m}}{a_{2m}}}{{{q^{m - 1}}}}{\partial _t^mw(1,t)}^2+\frac{{r_{2m+1}}{a_{2m + 1}}}{{{q^m}}}{\partial _t^{m + 1}w(1,t)}^2.
\end{align*}
We choose $a_{2m}>a_{2m+1}$ and sufficiently large ${r_{2m}}$, ${r_{2m+1}}$ to make
\begin{align*}
&\dot V_{2m}(t)\le- \frac{1}{5}\bigg(({a_{2m}} - {a_{2m + 1}})q-\frac{{a_{2m}}}{{4{q^{m - 1}}{r_{2m}}}} \notag\\
 &\quad- \frac{{a_{2m + 1}}}{{4{q^m}{r_{2m+1}}}}\bigg)\left({\left\| {\partial _x^{2m}w} \right\|^2}+{\left\| {\partial _x^{2m + 1}w} \right\|^2}\right)\notag\\
 &\quad+\frac{ {r_{2m}}{a_{2m}}}{{{q^{m - 1}}}}{\partial _t^mw(1,t)}^2+\frac{{r_{2m+1}}{a_{2m + 1}}}{{{q^m}}}{\partial _t^{m + 1}w(1,t)}^2,
\end{align*}
where Poincar$\acute e$ inequality is used.

We thus have
\begin{align}
\dot V_{2m}(t)&\le- \bar\lambda_{2m} V_{2m}(t)+\frac{ {r_{2m}}{a_{2m}}}{{{q^{m - 1}}}}{\partial _t^mw(1,t)}^2\notag\\
 &\quad+\frac{{r_{2m+1}}{a_{2m + 1}}}{{{q^m}}}{\partial _t^{m + 1}w(1,t)}^2,\label{eq:dVmfinal}
\end{align}
for some positive $\bar\lambda_{2m}$.

Taking the time derivative of \eqref{eq:wtm} and using Cauchy-Schwarz inequality, we have
\begin{align}
\left|\partial _t^{m+1}w(1,t)\right|^2\le& \xi_0\|w_{xx}(\cdot,t)\|^2+\xi_0|X(t)|^2\notag\\
&+\xi_0w_x(0,t)^2+\xi_0\sum_{i=1}^{m}\left|\partial _t^iw(1,t)\right|^2\label{eq:dwm1}
\end{align}
for some positive $\xi_0$, where \eqref{eq:target1} is used. Recalling \eqref{eq:norm2}, \eqref{eq:omega1}-\eqref{eq:V2bound}, \eqref{eq:dv2final}, we have the exponential stability estimate in the sense of $\sum_{i=0}^{2}\|\partial _x^iw(\cdot,t)\|$. Using Sobolev inequality, we obtain the exponential stability estimate in
term of the norm $\|w(\cdot,t)\|_{C_1}$. Together with \eqref{eq:nct1}-\eqref{eq:nctm}, \eqref{eq:norm2}, \eqref{eq:omega1}-\eqref{eq:V2bound}, \eqref{eq:dv2final} and \eqref{eq:etam}, we have the exponential convergence of $\partial _t^{m+1}w(1,t)$ via \eqref{eq:dwm1}, i.e.,
\begin{align}
\left|\partial _t^{m+1}w(1,t)\right|\le \bar\Upsilon_{m+1}e^{-\xi_2t}=\eta_{m+1}(t),\label{eq:etam1}
\end{align}
for some positive $\bar\Upsilon_{m+1}$ and $\xi_2$.

Define a Lyapunov function
\begin{align}
{V_{u}}(t) &= V_{2m}(t)+\frac{R_1}{2}\eta_{m}(t)^2+\frac{R_2}{2}\eta_{m+1}(t)^2,\label{eq:Vmn}
\end{align}
where $R_1,R_2$ are positive constants to be determined later.

Defining
\begin{align}
\Omega_{m}(t)&=\|\partial_x^{2m} w(\cdot,t)\|^2+\|\partial_x^{2m+1} w(\cdot,t)\|^2\notag\\
&\quad+\eta_{m}(t)^2+\eta_{m+1}(t)^2,\label{eq:Omegam}
\end{align}
we have
\begin{align}
\theta_{m1}\Omega_m(t)\le V_{u}(t)\le \theta_{m2}\Omega_m(t),\label{eq:Omegam1}
\end{align}
where ${\theta _{m1}} = \min \left\{ \frac{a_{2m}}{2},\frac{a_{2m+1}}{2},\frac{R_1}{2},\frac{R_2}{2}\right\}$, ${\theta _{m2}} = \max \left\{ \frac{a_{2m}}{2},\frac{a_{2m+1}}{2},\frac{R_1}{2},\frac{R_2}{2}\right\}$.

Taking the derivative of \eqref{eq:Vmn}, recalling \eqref{eq:dVmfinal}, \eqref{eq:etam}, \eqref{eq:etam1}, we obtain
\begin{align}
\dot V_{u}(t)&\le- \bar\lambda_{2m} V_{2m}(t)-\bigg(R_1\xi_1-\frac{ {r_{2m}}{a_{2m}}}{{{q^{m - 1}}}}\bigg)\eta_m(t)^2\notag\\
 &\quad-\bigg(R_2\xi_2-\frac{{r_{2m+1}}{a_{2m + 1}}}{{{q^m}}}\bigg)\eta_{m+1}(t)^2.\label{eq:dVmnfinal}
\end{align}
Choosing large sufficiently $R_1,R_2$, we arrive at
\begin{align}
 \dot V_{u}(t)\le- \lambda_{u} V_{u}(t)
\end{align}
for some positive $\lambda_{u}$.

Now we obtain the exponential stability estimate in the sense of the norm $\Omega_{m}(t)^{\frac{1}{2}}$ recalling \eqref{eq:Omegam}-\eqref{eq:Omegam1}. Using the invertibility of \eqref{eq:t1}, we obtain the exponential stability estimate in the sense of $(\|\partial_x^{2m}u(\cdot,t)\|^2+\|\partial_x^{2m+1}u(\cdot,t)\|^2)^{\frac{1}{2}}$.

Therefore, using the result in the subsection $1)$ and the above proof in the subsection $2)$, we obtain $ii)$ of Lemma \ref{lem:2}.

The proof of Lemma \ref{lem:2} is completed.

\textbf{\emph{Proof of Theorem \ref{lem:erm}:}} Define a Lyapunov function
\begin{align}
V_w(t)=\sum\limits_{i=0}^{2m}\|\partial_x^{i}\tilde w(\cdot,t)\|^2.\label{eq:Vw}
\end{align}

Taking $i$ times derivative of \eqref{eq:w2} with
respect to $x$,
\begin{align}
\partial_x^{i}\tilde w_t(x,t)=q\partial_x^{i}\tilde w_{xx}(x,t),\label{eq:ix}
\end{align}
where $i=1,\cdots,2m$.
Taking the derivative of \eqref{eq:Vw} along \eqref{eq:w2}-\eqref{eq:w4} and \eqref{eq:ix}, applying Poincar$\acute e$ inequality, yields
\begin{align}
\dot V_w(t)&=-q\sum\limits_{i=1}^{2m+1}\|\partial_x^{i}\tilde w(\cdot,t)\|^2\notag\\
&\le-\frac{1}{5}q\sum\limits_{i=0}^{2m}\|\partial_x^{i}\tilde w(\cdot,t)\|^2\le -\lambda_w V_w(t),
\end{align}
for some positive $\lambda_w$.

We thus obtain
\begin{align}
\sum_{i=0}^{2m}\|\partial_x^{i}\tilde w(\cdot,t)\|^2\le&\Upsilon_{\tilde w}\bigg(\sum_{i=0}^{2m}\|\partial_x^{i}\tilde u(\cdot,0)\|^2\notag\\
&+\left|\tilde Z(0)\right|^2+\left|\tilde X(0)\right|^2\bigg)e^{-\lambda_{\tilde w}t},\label{eq:tw}
\end{align}
with $\Upsilon_{\tilde w},\lambda_{\tilde w}$ are some positive constants, where \eqref{eq:tra1} is used to replace $\sum_{i=0}^{2m}\|\partial_x^{i}\tilde w(\cdot,0)\|^2$.

Especially from the exponential stability result of $\sum_{i=0}^{2}\|\partial_x^{i}\tilde w(\cdot,t)\|$ and using Sobolev inequality, we can obtain the exponential stability estimate in
term of the norm $\|\tilde w(\cdot,t)\|_{C_1}$, which gives the exponential
convergence of $\tilde w_x(0,t)$. Considering \eqref{eq:w5}, because $\bar A$ \eqref{eq:barA} is Hurwitz and $\tilde w_x(0,t)$ is exponentially convergent, $|\tilde Z(t)|,|\tilde X(t)|$ are exponentially convergent in the sense of
\begin{align}
&\quad|\tilde Z(t)|^2+|\tilde X(t)|^2\notag\\
&\le \Upsilon_{ZX}\left(|\tilde Z(0)|^2+|\tilde X(0)|^2+\tilde u_x(0,0)^2\right)e^{-\lambda_{ZX}t}\label{eq:ZX}
\end{align}
with $\Upsilon_{ZX},\lambda_{ZX}$ being some positive constants, where \eqref{eq:tra1} is used to replace $\tilde w_x(0,0)^2$.

Recalling \eqref{eq:tra1} and applying Cauchy-Schwarz inequality, we have
\begin{align}
\sum_{i=0}^{2m}\|\partial_x^{i}\tilde u(\cdot,t)\|^2\le&3\sum_{i=0}^{2m}\|\partial_x^{i}\tilde w(\cdot,t)\|^2+3\sum_{i=0}^{2m}\|\partial_x^{i}\vartheta(\cdot)\|^2\left|\tilde Z(t)\right|^2\notag\\
&+3\sum_{i=0}^{2m}\|\partial_x^{i}\theta(\cdot)\|^2\left|\tilde X(t)\right|^2.
\end{align}
Because $\sum_{i=0}^{2m}\|\partial_x^{i}\vartheta(\cdot)\|^2$ and $\sum_{i=0}^{2m}\|\partial_x^{i}\vartheta(\cdot)\|^2$ are bounded gains according to \eqref{eq:ss1}-\eqref{eq:ss2}, applying \eqref{eq:tw}-\eqref{eq:ZX}, we obtain \eqref{eq:lemma4norm}-\eqref{eq:ernorm}.
\ifCLASSOPTIONcaptionsoff
  \newpage
\fi


\begin{thebibliography}{10}


%
\bibitem{Ahmed2015Observer}
T. Ahmed-Ali, F. Giri, M. Krstic, F. Lamnabhi-Lagarrigue,
``Observer design for a class of nonlinear ODE-PDE cascade systems'',
{\em Syst. Control Lett.}, 83, pp:19-27, 2015.

\bibitem{Ahmed2016Adaptive}
T. Ahmed-Ali, F. Giri, M. Krstic, F. Lamnabhi-Lagarrigue and L. Burlion,
``Adaptive observer for a class of parabolic PDEs'',
{\em IEEE Trans. Autom. Control}, 61(10), pp:3083-3090, 2016.

\bibitem{Ahmed2017Adaptive}
T. Ahmed-Ali, F. Giri, M. Krstic, L. Burlion and F. Lamnabhi-Lagarrigue,
``Adaptive observer design with heat PDE sensor'',
{\em  Automatica}, 82, pp.93-100, 2017.

\bibitem{AnfinsenStabilization}
H. Anfinsen, O.M. Aamo,
``Stabilization of a linear hyperbolic PDE with actuator and sensor dynamics'',
{\em  Automatica}, 95, 104-111, 2018.

\bibitem{2015Boundary}
A. Baccoli, A. Pisano and Y. Orlov.
``Boundary control of coupled reaction-diffusion processes with constant parameters'',
{\em  Automatica}, 54, pp. 80-90, 2015.

\bibitem{Cheng2009sampled}
M. B. Cheng, V. Radisavljevic, C. C. Chang, C. F. Lin and  W. C. Su,
``A sampled-data singularly perturbed boundary control for a heat conduction system with noncollocated observation'',
{\em IEEE Trans. Autom. Control}, 54(6), 1305-1310, 2009.

{\bibitem{Deutscher2015backstepping}
J. Deutscher, ``A backstepping approach to the output regulation of boundary
controlled parabolic PDEs'', {\em  Automatica}, 57, 56-64, 2015.

\bibitem{Deutscher2016backstepping}
J. Deutscher,``Backstepping Design of Robust Output Feedback Regulators
for Boundary Controlled Parabolic PDEs'', {\em IEEE Trans. Autom. Control}, 61(8), 2288-2294, 2016.}


\bibitem{2008Lyapunov}
M. Krstic,
``Lyapunov tools for predictor feedbacks for delay systems: Inverse optimality and robustness to delay mismatch'',
{\em  Automatica}, 44(11), pp.2930-2935, 2008.


%
\bibitem{krstic2008boundary}
M. Krstic and A. Smyshlyaev,
\newblock {\em Boundary control of PDEs: A course on backstepping designs}.
\newblock Singapore:Siam, 2008.

\bibitem{krstic2008Adaptiveboundary}
M. Krstic and A. Smyshlyaev.
``Adaptive boundary control for unstable parabolic PDEs-Part I: Lyapunov design'',
{\em IEEE Trans. Autom. Control}, 53, pp:1575-1591, 2008.

\bibitem{krstic2009compensating}
M. Krstic,
``Compensating actuator and sensor dynamics governed by diffusion {PDE}s'',
{\em Syst. Control Lett.}, 58(5), pp. 372-377, 2009.

\bibitem{krstic2010compensating}
M. Krstic,
``Compensation of infinite-dimensional actuator and sensor dynamics: Nonlinear and delay-adaptive systems'',
{\em IEEE Control Syst. Mag.}, 30, pp. 22-41, 2010.

\bibitem{Meurer2009Tracking}
{T. Meurer, A. Kugi,
``Tracking control for boundary controlled parabolic PDEs with varying
parameters: Combining backstepping and differential flatness''\newblock {\em  Automatica} vol. 45, pp. 1182-1194, 2009.}

\bibitem{koga2017State}
S. Koga, L. Camacho-Solorio and M. Krstic,
``State Estimation for Lithium Ion Batteries With Phase Transition Materials'',
ASME 2017 Dynamic Systems and Control Conference (pp.V003T43A002-V003T43A002).

\bibitem{koga2016Backstepping}
S. Koga, M. Diagne, S.-X. Tang and M. Krstic,
``Backstepping control of the one-phase stefan problem'',
American Control Conference (ACC), pp.2548-2553, 2016.

\bibitem{Liu2000Backstepping}
W.J. Liu and M. Krstic,
``Backstepping boundary control of Burgers' equation with actuator dynamics'', {\em Syst. Control Lett.}, 41, pp.291-303, 2000.

{\bibitem{Orlov2017Output}
Y. Orlov, A. Pisano, A. Pilloni, E. Usai,``Output feedback stabilization of coupled reaction-diffusion processes with constant parameters''
\newblock {\em  SIAM Journal on Control and Optimization},
,55(6), 4112-4155, 2017.}

\bibitem{Pisano2012Boundary}
A. Pisano and Y. Orlov.
\newblock ``Boundary second-order sliding-mode control of an uncertain heat process with unbounded matched perturbation'',
\newblock {\em  Automatica} vol. 48, pp. 1768-1775, 2012.

\bibitem{Petrus2012Enthalpy}
B. Petrus, J. Bentsman, and B.G. Thomas,
``Enthalpy-based feedback
control algorithms for the Stefan problem'',
Decision and Control (CDC),
IEEE 51st Annual Conference on, pp. 7037-7042, 2012.




\bibitem{Ren2013Stabilization}
B. Ren, J.M. Wang and M. Krstic,
``Stabilization of an ODE-Schr$\ddot o$dinger cascade'',
{\em Syst. Control Lett.}, 62(6), pp.503-510,2013.



\bibitem{Smyshlyaev2005Backstepping}
A. Smyshlyaev and M. Krstic,
``Backstepping observers for a class of
parabolic PDEs'',
{\em Syst. Control Lett.}, 54, pp. 613-725, 2005.

\bibitem{Smyshlyaev2009Adaptive}
A. Smyshlyaev, Y. Orlov, and M. Krstic,
``Adaptive identification of
two unstable PDEs with boundary sensing and actuation'',
{\em Int. J. Adapt. Control and Signal Process.}, 23, pp. 131-149, 2009.


\bibitem{susto2010control}
G.A. Susto and M. Krstic,
``Control of {PDE}-{ODE} cascades with neumann interconnections'',
{\em J. Franklin Inst.}, 347(1), pp. 284-314, 2010.


\bibitem{tang2011state}
S.-X. Tang and C. Xie,
``State and output feedback boundary control for a coupled {PDE}-{ODE} system,''
{\em Syst. Control Lett.}, 60, pp. 540-545, 2011.

\bibitem{tang2011stabilization}
S.-X. Tang and C. Xie,
``Stabilization for a coupled PDE-ODE control system'',
{\em J. Franklin Inst.}, 348(8), pp. 2142-2155, 2011.

\bibitem{Tang2017State}
S.-X. Tang, L. Camacho-Solorio, Y. Wang and M. Krstic,
``State-of-Charge estimation from a thermal-electrochemical model of lithium-ion batteries'',
{\em  Automatica}, 83, pp.206-219, 2017.


\bibitem{Wettlaufer1991Heat}
J.S. Wettlaufer,
``Heat flux at the ice-ocean interface'',
{\em Journal of Geophysical Research: Oceans}, 96, pp. 7215-7236, 1991.


\bibitem{wang2015Sliding}
J.M. Wang, J.J. Liu, B. Ren, and J. Chen,
``Sliding mode control to stabilization of cascaded heat PDE-ODE systems subject to boundary control matched disturbance'',
{\em  Automatica}, 52, pp.23-34, 2015.
%
\bibitem{wangcontrol}
J. Wang, M. Krstic and Y. Pi,
``Control of a $2\times2$ Coupled Linear Hyperbolic System Sandwiched Between Two ODEs'',
{\em  Int. J. Robust Nonlin.}, 28, pp. 3987-4016, 2018.

\end{thebibliography}

%

%




\end{document}